\documentclass[journal]{IEEEtran}

\usepackage{amsmath,amssymb,amsfonts}
\usepackage{bm}
\usepackage{cite}
\usepackage{algorithm}
\usepackage{algorithmic}
\usepackage{graphicx}
\ifCLASSOPTIONcompsoc
\usepackage[caption=false,font=normalsize,labelfont=sf,textfont=sf]{subfig}
\else
\usepackage[caption=false,font=footnotesize]{subfig}
\fi
\usepackage{caption}
\usepackage{lipsum}
\usepackage{enumerate}
\usepackage{amsthm}
\usepackage{amsmath}
\usepackage{amssymb}
\usepackage{color}
\usepackage{url}
\usepackage{booktabs}

\usepackage{etoolbox}

\BeforeBeginEnvironment{figure}{\vskip-2ex}
\AfterEndEnvironment{figure}{\vskip-1ex}

\newtheorem{definition}{Definition}
\newtheorem{lemma}{Lemma}
\newtheorem{theorem}{Theorem}
\newtheorem{problem}{Problem}
\newtheorem{proposition}{Proposition}

\newtheorem{assumption}{Assumption}
\newtheorem{remark}{Remark}

\usepackage{setspace}
\usepackage{bm}
\usepackage{threeparttable}
 

\makeatletter
\newcommand*{\rom}[1]{\expandafter\@slowromancap\romannumeral #1@}
\makeatother

\ifCLASSOPTIONcompsoc
  \usepackage[caption=false,font=normalsize,labelfont=sf,textfont=sf]{subfig}
\else
  \usepackage[caption=false,font=footnotesize]{subfig}
\fi

\usepackage{tikz}
\newcommand\copyrighttext{%
  \footnotesize \textcopyright 2023 IEEE. Personal use of this material is permitted.
  Permission from IEEE must be obtained for all other uses, in any current or future
  media, including reprinting/republishing this material for advertising or promotional
  purposes, creating new collective works, for resale or redistribution to servers or
  lists, or reuse of any copyrighted component of this work in other works.}
\newcommand\copyrightnotice{%
\begin{tikzpicture}[remember picture,overlay]
\node[anchor=south,yshift=10pt] at (current page.south) {\fbox{\parbox{\dimexpr\textwidth-\fboxsep-\fboxrule\relax}{\copyrighttext}}};
\end{tikzpicture}%
}

\hyphenation{op-tical net-works semi-conduc-tor}

\begin{document}

\title{Optimization Landscape of Policy Gradient Methods for Discrete-time Static Output Feedback}

\author{Jingliang Duan, Jie Li, Xuyang Chen, Kai Zhao, Shengbo Eben Li, Lin Zhao
\thanks{The work of J. Duan was supported in part by the NSF China under Grant 52202487 and in part by the State Key Laboratory of Automotive Safety and Energy, China under Project KFY2212. The work of L. Zhao was supported in part by the Singapore Ministry of Education Tier 1 Academic Research Fund (A0009030-00-00, 22-5460-A0001). J. Duan and J. Li contributed equally to this study. All correspondence should be sent to L. Zhao. 
}
\thanks{J. Duan is with the School of Mechanical Engineering, University of Science and Technology Beijing, Beijing, 100083, China, and also with the Department of Electrical and Computer Engineering, National University of Singapore, Singapore 119077. {\tt\small Email:duanjl@ustb.edu.cn}.}
\thanks{X. Chen, K. Zhao and L. Zhao are with the Department of Electrical and Computer Engineering, National University of Singapore, Singapore 119077. {\tt\small Email: chenxuyang@u.nus.edu, zhaokai@cqu.edu.cn, elezhli@nus.edu.sg}.}
\thanks{J. Li and S. E. Li are with the School of Vehicle and Mobility, Tsinghua University, Beijing, 100084, China. {\tt\small Email: jie-li18@mails.tsinghua.edu.cn, lishbo@mail.tsinghua.edu.cn}.
}
}

 \maketitle
  \copyrightnotice

\begin{abstract}
In recent times, significant advancements have been made in delving into the optimization landscape of policy gradient methods for achieving optimal control in linear time-invariant (LTI) systems. Compared with state-feedback control, output-feedback control is more prevalent since the underlying state of the system may not be fully observed in many practical settings. This paper analyzes the optimization landscape inherent to policy gradient methods when applied to static output feedback (SOF) control in discrete-time LTI systems subject to quadratic cost. We begin by establishing crucial properties of the SOF cost, encompassing coercivity, $L$-smoothness, and $M$-Lipschitz continuous Hessian. Despite the absence of convexity, we leverage these properties to derive novel findings regarding convergence (and nearly dimension-free rate) to stationary points for three policy gradient methods, including the vanilla policy gradient method, the natural policy gradient method, and the Gauss-Newton method. Moreover, we provide proof that the vanilla policy gradient method exhibits linear convergence towards local minima when initialized near such minima. The paper concludes by presenting numerical examples that validate our theoretical findings. These results not only characterize the performance of gradient descent for optimizing the SOF problem but also provide insights into the effectiveness of general policy gradient methods within the realm of reinforcement learning.
\end{abstract}

\begin{IEEEkeywords}
static output feedback, policy gradient, reinforcement learning
\end{IEEEkeywords}

\IEEEpeerreviewmaketitle

\section{Introduction}
\label{sec.introduction}

Reinforcement learning (RL) has showcased remarkable proficiency comparable to human capabilities in a variety of challenging tasks, spanning from games to robot control \cite{silver2016mastering,li2022reinforcement,duan2023relaxed,guan2022integrated}. RL methods relying on policy gradient, including DDPG \cite{lillicrap2015DDPG}, SAC\cite{Haarnoja2018SAC}, and DSAC\cite{duan2021distributional}, are commonly employed to identify parameterized optimal control policies for tasks with continuous action space. However, despite these achievements, a complete theoretical grasp of the complexity and performance of such algorithms remains lacking, even in fundamental scenarios like linear time-invariant (LTI) systems.

Optimal control problems have served as a potent tool for exploring various characteristics of RL, including aspects like stability \cite{wang2022intelligent,wang2022system}. Within the framework of policy gradient methods, prior investigations have delved into the optimization landscape and the attributes of convergence, particularly within the context of linear quadratic regulator (LQR) problems \cite{zhang2020global}. It is widely acknowledged that the optimal solution of LQR problems can be derived through the solution of the algebraic Riccati equation (ARE). However, in the pursuit of unveiling the characteristics of policy gradient during the training process, the focus shifts towards the direct optimization of the linear policy using the LQR cost, rather than solving the corresponding ARE.  In this context, it is noteworthy that the related optimization problem generally assumes a non-convex nature since the set of stabilizing state-feedback gains may lack convexity \cite{fazel2018global}. An influential work by Fazel \emph{et al.} (2018) discovered that the discrete-time LQR objective function exhibits properties of gradient dominance and almost smoothness, enabling policy gradient methods to achieve linear global convergence, despite the non-convexity of the LQR \cite{fazel2018global}. Subsequent studies have explored akin attributes, with specific attention to both discrete-time and continuous-time LQR \cite{bu2019lqr,bhandari2019global,mohammadi2019CT-LQR,malik2019derivative}, as well as various LQR variations \cite{hambly2020noisyLQR,zheng2021sampleLQG,ren2021lqrtracking,jansch2020Mjump,furieri2020distributedLQR,zhang2020robust}.

Compared with state-feedback control, output-feedback control is more common since the underlying state of the system may not be fully observed in practical settings \cite{wang2017observer, yu2021command,zhao2022composite,zong2022output}. Most of the existing convergence results of gradient descent are built on full state feedback, whereas the convergence for static output feedback (SOF) LQR has received little attention. As one of the most crucial open topics in LTI systems, SOF can only acquire some linear combinations of states, rather than entire states \cite{syrmos1997SOF}. Unlike the state feedback LQR, the output feedback gain of SOF may have a disconnected domain, with local minima, saddle points, or even local maxima in each component \cite{fatkhullin2020CTSOF,feng2020connectivity,bu2019topological}. Therefore, finding the globally optimal SOF controller using gradient descent is generally intractable. However, it is still of great significance to investigate the optimization landscape of SOF, particularly concerning the convergence towards stationary points, which will bring new insights into the performance of policy gradient methods for partially observed control problems. 

Recent efforts have elucidated the optimization landscape pertaining to continuous-time LTI systems in the context of SOF, delineating convergence rates to stationary points \cite{fatkhullin2020CTSOF}. However, these findings are limited to the vanilla policy gradient method, with the convergence behaviors of popular alternatives like the natural policy gradient \cite{kakade2001natural} and the Gauss-Newton method \cite{foresee1997gauss} yet to be fully clarified. Given the inherent distinctions between difference equations and differential equations, the analysis of optimization landscapes in discrete-time systems assumes a distinct character. Notably, discrete-time SOF holds practical significance due to its alignment with control frequency limitations, and the utilization of discrete-time data from real-world systems holds the promise of extending convergence insights to model-free contexts.

Despite these considerations, the theoretical properties of policy gradient methods applied to discrete-time SOF scenarios have been overlooked in existing studies. This study takes the initial step towards bridging this gap and offers the following principal contributions.
\begin{enumerate}
    \item We unveil several crucial properties of the SOF cost function, in spite of its non-convex nature. Notable among these properties are the compact sublevel set, $L$-smoothness, and $M$-Lipschitz continuous Hessian, which are instrumental in the subsequent theoretical analyses. A standout feature of our work is the establishment of Hessian Lipschitz continuity, a property that provides critical insights into the path of convergence towards local minima within SOF problems. This property is typically overlooked in the extant literature on both SOF and state-feedback LQR.  Diverging from approaches that establish $L$-smoothness  \cite{jansch2020Mjump}, we prove Hessian Lipschitz continuity through a direct application of its definition, thereby avoiding complex tensor operations.
    \item Unlike state-feedback LQR, where theories of convergence often hinge on the concept of gradient dominance \cite{fazel2018global,bu2019lqr,bhandari2019global,mohammadi2019CT-LQR,hambly2020noisyLQR,ren2021lqrtracking}, the landscape of SOF problems presents greater complexities. This complexity arises from the disconnected nature of the stabilizing SOF domain and the potential multiplicity of stationary points. Leveraging the compactness and $L$-smoothness of the SOF cost function, we show that three different policy gradient methods (the vanilla policy gradient, the natural policy gradient, and the Gauss-Newton method) can converge to stationary points at a (nearly) dimension-free rate, given an initial stabilizing policy.
    \item Furthermore, when the initial point is proximate to a local minimum, we demonstrate that the vanilla policy gradient method converges linearly towards it, predicated on the Lipschitz continuity of the Hessian.
\end{enumerate}

It is worth noting that the primary goal of this study is not the introduction of a new control algorithm for specific control problems. Rather, we focus on the SOF problem as a fertile ground for investigating the convergence, complexity, and optimality of policy gradient-based RL algorithms. Our findings offer new perspectives on the effectiveness of policy gradient methods in SOF problems and illuminate the efficacy of employing general policy gradient methods when learning SOF policies with unknown systems.

\textbf{Notation}
$\|M\|$, $\|M\|_F$, and $\rho(M)$ denote the induced 2-norm, Frobenius norm, and spectral radius of a matrix $M$; For square matrices, ${\rm Tr}(M)$, $\lambda_{\rm min}(M)$, and $\sigma_{\rm min}(M)$ represent the trace, minimal eigenvalue, and minimal singular value; $\mathrm{vec}(M)$ indicates the vectorized form; $\partial \mathbb{M}$ signifies the boundary of the set $\mathbb{M}$; $M \succ N$ ($M \succeq N$) implies that $M-N$ is positive definite (semidefinite); $\mathbb{S}^n_{+}$ ( $\mathbb{S}^n_{++}$) refers to the set of symmetric $n\times n$ positive semidefinite (definite) matrices; $\mathbb{N}$ stands for the set of natural numbers; $\mathbb{E}_{x}$ denotes taking expectation over $x$, and $I_n$ denotes the identity matrix.

\section{Problem Statement}
\label{sec:preliminary}
Consider the discrete-time linear time-invariant (LTI) dynamic model 
\begin{equation} 
\label{eq.statefunction}
\begin{aligned}
x_{t+1} &= Ax_t+Bu_t,\\
y_t &= Cx_t,
\end{aligned}
\end{equation}
with $x$ denoting the state, $y$ representing the output, and matrices $A\in \mathbb{R}^{n\times n}$, $B\in \mathbb{R}^{n\times m}$, and $C\in \mathbb{R}^{d\times n}$ describing the system dynamics. The linear quadratic regulator (LQR) problem aims to find a control policy to minimize the accumulated linear quadratic cost 
\begin{equation}   
\label{eq.objective}
\mathbb{E}_{x_0\sim \mathcal{D}}\Big[\sum_{t=0}^{\infty}(x_t^\top Qx_t+u_t^\top Ru_t) \Big],
\end{equation}
where it is assumed that $\mathbb{E}_{x_0\sim\mathcal{D}}[x_0x_0^\top]\succ 0$, and $Q\in \mathbb{S}_{++}^{n}$ and $R\in \mathbb{R}_{++}^{m}$ are performance weights. The assumption on $\mathbb{E}_{x_0\sim\mathcal{D}}[x_0x_0^\top]\succ 0$ is quite standard in learning-based control \cite{fazel2018global, bu2019lqr} and can be somehow informally thought as the persistent excitation condition in data-driven control. 

The static output feedback (SOF) is defined as 
\begin{equation}   
\label{eq.sop}
u_t=-Ky_t,
\end{equation} 
with $K\in \mathbb{R}^{m\times d}$. Substituting the SOF controller into the dynamic model~\eqref{eq.statefunction} yields 
\begin{equation}   
\label{eq.closed-loop-system}
x_{t+1} = \mathcal{A}_{K}x_t, 
\end{equation}
where $\mathcal{A}_{K}:=A-BKC$. We can further reformulate the linear quadratic cost \eqref{eq.objective} as
\begin{equation}   
\label{eq.objective_with_K}
J(K) = \mathbb{E}_{x_0\sim \mathcal{D}}\Big[\sum_{t=0}^{\infty}x_t^\top (Q+C^\top K^\top R K C )x_t \Big].
\end{equation}
This study assumes that a stabilizing controller is present. We refer to the set of all stabilizing control gain $K$ as the feasible set, i.e., 
\begin{equation}
\mathbb{K}:=\{K\in \mathbb{R}^{m\times d}:\rho(\mathcal{A}_{K})<1\}.
\end{equation}

For the LTI systems \eqref{eq.statefunction}, the value function of state $x$ takes the quadratic closed-loop form 
\begin{equation}
\label{eq.V_expand}
V_{K}(x_t): = x_t^\top P_{K}x_t,
\end{equation}
where $P_K\in \mathbb{S}_+^n$. 

We define the accumulated state correlation matrix as 
\begin{equation}
\label{eq.correlation_matrix}
\Sigma_K:=\mathbb{E}_{x_0\sim \mathcal{D}}\sum_{t=0}^{\infty}x_t x_t^\top.
\end{equation}
If the initial state correlation matrix is positive definite, i.e.,
\begin{equation}
X_0 :=  \mathbb{E}_{x_0\sim \mathcal{D}}[x_0x_0^\top] \succ 0,
\end{equation}
one has that the minimal singular value of $X_0$ 
\begin{equation}
\mu :=  \sigma_{\rm min}(X_0)\
> 0.
\end{equation}
Since $\Sigma_K \succeq X_0$, it is straightforward that
\begin{equation}
\sigma_{\rm min}(\Sigma_K)\ge \mu.
\end{equation}

With $P_K$ and $\Sigma_K$, it is well known in the literature \cite{fazel2018global,lee2018primal} that SOF control of discrete-time LTI systems with quadratic cost can be formulated into the following problem.
\begin{problem}[Policy Optimization for SOF]
\label{pro.SOF}
\begin{equation}   
\label{eq.cost_in_P}
\begin{aligned}
\min_{K\in \mathbb{K}}\  J(K)={\rm Tr}(P_KX_0)={\rm Tr}((Q+C^\top K^\top R K C)\Sigma_K),
\end{aligned}
\end{equation}
\textnormal{where $P_{K}$ and $\Sigma_{K}$ satisfy the following Lyapunov equations}
\begin{subequations}
\begin{align} 
\label{eq.lyapunov_equation}
&\begin{aligned}
P_K &= Q + C^\top K^\top RKC+\mathcal{A}_{K}^\top P_K\mathcal{A}_{K},
\end{aligned}\\
&\begin{aligned} 
\label{eq.lyapunov_equation_sigma}
\Sigma_K = X_0 + \mathcal{A}_{K} \Sigma_K\mathcal{A}_{K}^\top.
\end{aligned} 
\end{align} 
\end{subequations}
\end{problem}

The unique positive definite solution of \eqref{eq.lyapunov_equation} can be expressed as
\begin{equation}   
\label{eq.P_expand}
\begin{aligned}
P_K= \sum_{j=0}^{\infty}{\mathcal{A}_{K}^\top}^j(Q+C^\top K^\top RKC){\mathcal{A}_{K}}^j.
\end{aligned}
\end{equation} 
The formulation of Problem \ref{pro.SOF} enables us to derive the analytical policy gradients to analyze the optimization landscape.  For this problem, we make the following standard assumption.
\begin{assumption}
\label{assumption.control_observe}
$(A,B)$ is controllable, $(C, A)$ is observable, and $C$ has independent rows.
\end{assumption}

Note that the feasible set 
$\mathbb{K}$ of Problem \ref{pro.SOF} can possess a disconnected domain, replete with local minima, saddle points, or even local maxima at the stationary points of each component \cite{fatkhullin2020CTSOF,feng2020connectivity,bu2019topological}; 
therefore, Problem \ref{pro.SOF} is generally non-convex, making the convergence analysis far from straightforward.

\section{Gradients and Hessian}
\label{sec.gradient}
In this section, we give the analytical expression for both the gradient and Hessian. The derivations follow similar lines as the state-feedback LQR case \cite{fazel2018global,jansch2020Mjump}. 

\begin{lemma}[Policy Gradient Formula]
\label{lemma:gradient}
\textnormal{
 For any control gain $K$ in the feasible set $\mathbb{K}$, we have 
\begin{equation} 
\label{eq.gradient}
\nabla J(K) = 2E_K\Sigma_KC^\top,
\end{equation}
with $E_K := (R+B^\top P_K B)KC-B^\top P_KA$.}
\end{lemma}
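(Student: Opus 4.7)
The plan is to compute the differential $\delta J(K) = \mathrm{Tr}(\delta P_K\, X_0)$ directly from the Lyapunov equation \eqref{eq.lyapunov_equation} that defines $P_K$, and then repackage the answer using the closed-form series \eqref{eq.correlation_matrix} for $\Sigma_K$. Perturbing $K \mapsto K+\delta K$ gives $\delta \mathcal{A}_K = -B\,\delta K\, C$, and differentiating both sides of \eqref{eq.lyapunov_equation} yields a Lyapunov-type recursion for $\delta P_K$ of the form
\begin{equation*}
\delta P_K \;=\; S_K(\delta K) \;+\; \mathcal{A}_K^\top\, \delta P_K\, \mathcal{A}_K,
\end{equation*}
where $S_K(\delta K)$ collects the four cross terms arising from differentiating $C^\top K^\top R K C$ and $\mathcal{A}_K^\top P_K \mathcal{A}_K$.

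The first main step I would isolate is the algebraic simplification of $S_K(\delta K)$. Grouping the two terms whose variation sits to the right of $\delta K^\top$ gives the factor $R K C - B^\top P_K \mathcal{A}_K$, and substituting $\mathcal{A}_K = A - BKC$ one checks directly that
\begin{equation*}
R K C - B^\top P_K \mathcal{A}_K \;=\; (R + B^\top P_K B)\,K C - B^\top P_K A \;=\; E_K.
\end{equation*}
The same computation for the symmetric companion terms yields $E_K^\top$, so $S_K(\delta K) = C^\top \delta K^\top E_K + E_K^\top \delta K\, C$. This identity is the core of the derivation and, to me, the one nontrivial algebraic step in the lemma; everything else is bookkeeping.

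Next I would solve the perturbed Lyapunov equation. Because $K\in\mathbb{K}$, $\mathcal{A}_K$ is Schur, so one can iterate to obtain
\begin{equation*}
\delta P_K \;=\; \sum_{j=0}^{\infty} (\mathcal{A}_K^\top)^j\bigl(C^\top \delta K^\top E_K + E_K^\top \delta K\, C\bigr)\mathcal{A}_K^j.
\end{equation*}
Then $\delta J(K) = \mathrm{Tr}(\delta P_K X_0)$, and cycling the trace moves the $\mathcal{A}_K^j X_0 (\mathcal{A}_K^\top)^j$ factor together; summing and invoking $\Sigma_K = \sum_{j\ge 0}\mathcal{A}_K^j X_0 (\mathcal{A}_K^\top)^j$ produces
\begin{equation*}
\delta J(K) \;=\; \mathrm{Tr}\bigl((C^\top \delta K^\top E_K + E_K^\top \delta K\, C)\Sigma_K\bigr).
\end{equation*}

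Finally, I would apply the cyclic and transpose invariance of the trace, together with symmetry of $\Sigma_K$, to show that the two summands are equal and combine them, giving $\delta J(K) = 2\,\mathrm{Tr}\bigl(\delta K^\top E_K \Sigma_K C^\top\bigr) = \langle \delta K,\, 2 E_K \Sigma_K C^\top \rangle$. Reading off the gradient in the Frobenius inner product then yields $\nabla J(K) = 2 E_K \Sigma_K C^\top$. The main obstacle is really just the simplification to $E_K$; the Schur convergence of the series follows from $K\in\mathbb{K}$, and the rest is standard trace manipulation, so no delicate estimates are required.
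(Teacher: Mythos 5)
Your proposal is correct. It reaches the same formula by essentially the same underlying computation as the paper --- differentiating the Lyapunov/Bellman recursion and unrolling --- but packaged at the matrix level rather than the trajectory level. The paper differentiates the value-function recursion $V_K(x_0)=x_0^\top(Q+C^\top K^\top RKC)x_0+V_K(\mathcal{A}_K x_0)$ and unrolls $\nabla V_K$ along the state trajectory to obtain $2E_K\sum_t x_t x_t^\top C^\top$ before taking the expectation over $x_0$; you instead differentiate the Lyapunov equation for $P_K$ directly, solve the resulting recursion as $\delta P_K=\sum_{j\ge 0}(\mathcal{A}_K^\top)^j(C^\top\delta K^\top E_K+E_K^\top\delta K C)\mathcal{A}_K^j$, and cycle the trace against $\Sigma_K=\sum_{j\ge 0}\mathcal{A}_K^j X_0(\mathcal{A}_K^\top)^j$. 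Your version is in fact the machinery the paper itself deploys in Lemma~\ref{lamma:hessian} to obtain $P'_K[Z]$ in \eqref{eq.derivative_P}, so the two lemmas become instances of one calculation; it also makes explicit the algebraic identity $RKC-B^\top P_K\mathcal{A}_K=(R+B^\top P_K B)KC-B^\top P_K A=E_K$, which the paper's proof leaves implicit when it asserts $\nabla V_K(x_0)=2E_K x_0x_0^\top C^\top+\nabla V_K(x_1)$. The only things to keep in mind are the ones you already flagged: Schur stability of $\mathcal{A}_K$ (i.e., $K\in\mathbb{K}$) to justify the convergent series and the interchange of the trace with the infinite sum.
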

\begin{proof}
From \eqref{eq.V_expand} and \eqref{eq.lyapunov_equation}, it follows that 
\begin{equation}
\begin{aligned}
V_K(x_0) =& x_0^\top (Q + C^\top K^\top RKC)x_0+x_0^\top\mathcal{A}_{K}^\top P_K\mathcal{A}_{K}x_0\\
=&x_0^\top (Q + C^\top K^\top RKC)x_0+V_K(\mathcal{A}_{K}x_0).
\end{aligned}
\end{equation}
Taking the gradient of $V_K(x_0)$ w.r.t. $K$, one has
\begin{equation}
\begin{aligned}
\nabla V_K(x_0) =& 2E_K x_0x_0^\top C^\top +x_1^\top \nabla P_K x_1\big|_{x_1=\mathcal{A}_{K}x_0}\\
=& 2E_K x_0x_0^\top C^\top+\nabla V_K(x_1)\big|_{x_1=\mathcal{A}_{K}x_0}\\
=& 2E_K \sum_{t=0}^{\infty}(x_tx_t^\top) C^\top .
\end{aligned}
\end{equation}
By taking expectation w.r.t. $\mathcal{D}$, the expression of policy gradient is obtained 
\begin{equation}  
\nabla J(K) = \mathbb{E}_{x_0\sim \mathcal{D}}\nabla V_K(x_0)=2E_K\Sigma_KC^\top, 
\end{equation}
which completes the proof.
\end{proof}

Note that the objective function $J(K)$ is twice differentiable. Thus, the analytical form of the Hessian of the objective function can be derived. To simplify our analysis without delving into tensors, we analyze the Hessian along a certain matrix $Z\in \mathbb{R}^{m\times d}$, whose expression is as follows 
\begin{equation}  
\label{eq.quadaratic_of_hessian}
\begin{aligned}
\nabla^2J(K)[Z,Z]:&=\frac{{\rm d}^2}{{\rm d}\lambda^2}\Big|_{\lambda=0}J(K+\lambda Z)\\
&={\rm Tr}\left(\frac{{\rm d}^2}{{\rm d}\lambda^2}\Big|_{\lambda=0}P_{K+\lambda Z}X_0\right).
\end{aligned}
\end{equation}
\begin{lemma}
\label{lamma:hessian}
\textnormal{
For any control gain $K$ in the feasible set $\mathbb{K}$, the Hessian of the objective function $J(K)$ along a certain matrix $Z\in \mathbb{R}^{m\times d}$ is 
\begin{equation}  
\label{eq.Hessian formular}
\begin{aligned}
\nabla^2J(K)[Z,Z]&={\rm Tr}(2(ZC)^\top (B^{\top} P_KB + R)ZC \Sigma_K)\\
&~~~ - {\rm Tr}(4(BZC)^\top P'_K[Z]\mathcal{A}_{K} \Sigma_K),
\end{aligned}
\end{equation}
where 
\begin{equation} 
\label{eq.derivative_P}
\begin{aligned}
P'_K&[Z]=\sum_{j=0}^{\infty}{\mathcal{A}_{K}^\top}^j(C^\top Z^\top E_K  + E_K^\top ZC){\mathcal{A}_{K}}^j.
\end{aligned}
\end{equation}}
\end{lemma}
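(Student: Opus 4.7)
The plan is to parameterize the curve $\lambda \mapsto K + \lambda Z$ and differentiate the defining Lyapunov equation \eqref{eq.lyapunov_equation} twice with respect to $\lambda$ at $\lambda = 0$. Writing $A_\lambda := \mathcal{A}_{K+\lambda Z} = \mathcal{A}_K - \lambda BZC$ and $P_\lambda := P_{K+\lambda Z}$, the Lyapunov relation reads
\begin{equation*}
P_\lambda = Q + C^\top (K+\lambda Z)^\top R(K+\lambda Z) C + A_\lambda^\top P_\lambda A_\lambda .
\end{equation*}

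First I would differentiate once in $\lambda$ and evaluate at $\lambda = 0$. Using $\dot A_0 = -BZC$, this yields a Lyapunov equation in $P'_K[Z]$,
\begin{equation*}
P'_K[Z] - \mathcal{A}_K^\top P'_K[Z]\,\mathcal{A}_K = C^\top Z^\top R K C + C^\top K^\top R Z C - C^\top Z^\top B^\top P_K \mathcal{A}_K - \mathcal{A}_K^\top P_K B Z C .
\end{equation*}
The key algebraic observation is that $E_K = RKC - B^\top P_K \mathcal{A}_K$, obtained by expanding $(R + B^\top P_K B) K C - B^\top P_K A$ and using $\mathcal{A}_K = A - BKC$. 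Substituting this identity collapses the right-hand side to $C^\top Z^\top E_K + E_K^\top Z C$. Unrolling the Lyapunov equation as a geometric series (valid since $K \in \mathbb{K}$ gives $\rho(\mathcal{A}_K) < 1$) then yields exactly the claimed expression \eqref{eq.derivative_P} for $P'_K[Z]$; as a useful byproduct, $P'_K[Z]$ is symmetric.

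Next I would differentiate a second time. Since $\ddot A_\lambda \equiv 0$, the product rule at $\lambda = 0$ gives
\begin{equation*}
P''_K[Z,Z] - \mathcal{A}_K^\top P''_K[Z,Z]\,\mathcal{A}_K = 2 C^\top Z^\top (R + B^\top P_K B) Z C - 2 C^\top Z^\top B^\top P'_K[Z]\,\mathcal{A}_K - 2\,\mathcal{A}_K^\top P'_K[Z]\,B Z C .
\end{equation*}
By \eqref{eq.quadaratic_of_hessian} the Hessian quadratic form equals $\mathrm{Tr}\bigl(P''_K[Z,Z]\, X_0\bigr)$. The final step uses the $\Sigma_K$-Lyapunov equation \eqref{eq.lyapunov_equation_sigma} to rewrite $X_0 = \Sigma_K - \mathcal{A}_K \Sigma_K \mathcal{A}_K^\top$, so that the cyclic trace property converts
\begin{equation*}
\mathrm{Tr}\bigl(P''_K[Z,Z]\,X_0\bigr) = \mathrm{Tr}\bigl((P''_K[Z,Z] - \mathcal{A}_K^\top P''_K[Z,Z]\,\mathcal{A}_K)\,\Sigma_K\bigr) .
\end{equation*}
Plugging in the right-hand side above, using the cyclic property together with symmetry of $P'_K[Z]$ to fuse the two cross terms into $-4\,\mathrm{Tr}\bigl((BZC)^\top P'_K[Z]\,\mathcal{A}_K \Sigma_K\bigr)$, and rewriting $C^\top Z^\top (\cdot) Z C = (ZC)^\top (\cdot) (ZC)$, gives precisely \eqref{eq.Hessian formular}.

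The main obstacle is purely bookkeeping rather than conceptual: keeping the many product-rule terms straight at the second differentiation, confirming that $P'_K[Z]$ is symmetric (so the two cross terms merge into a single coefficient of $4$), and invoking the identity $E_K = RKC - B^\top P_K \mathcal{A}_K$ at the right moment so that the series form for $P'_K[Z]$ matches the stated expression \eqref{eq.derivative_P}. The trick of trading $X_0$ for $\Sigma_K$ via \eqref{eq.lyapunov_equation_sigma} avoids working with the infinite series for $P''_K[Z,Z]$ directly, which is what makes the tensor-free derivation clean.
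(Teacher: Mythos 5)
Your proposal is correct and follows essentially the same route as the paper: differentiate the Lyapunov equation for $P_{K+\lambda Z}$ twice, identify the Lyapunov equation satisfied by $P''_K[Z]$, and then convert $\mathrm{Tr}(P''_K[Z]X_0)$ into $\mathrm{Tr}(S_1\Sigma_K)$ (the paper does this by unrolling the series and swapping sums, which is equivalent to your substitution $X_0=\Sigma_K-\mathcal{A}_K\Sigma_K\mathcal{A}_K^\top$). Your explicit use of the identity $E_K=RKC-B^\top P_K\mathcal{A}_K$ and the symmetry of $P'_K[Z]$ merely spells out steps the paper leaves implicit.
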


\begin{proof}
Denote $P'_K[Z]:=\frac{{\rm d}}{{\rm d}\lambda}\Big|_{\lambda=0}P_{K+\lambda Z}$. From \eqref{eq.lyapunov_equation}, we have
\begin{equation} 
\label{eq.P_derivative}
\begin{aligned}
P'_K[Z]&=C^\top Z^\top E_K  + E_K^\top ZC+\mathcal{A}_{K}^\top P'_K[Z]\mathcal{A}_{K}\\
&= \sum_{j=0}^{\infty}{\mathcal{A}_{K}^\top}^j(C^\top Z^\top E_K  + E_K^\top ZC){\mathcal{A}_{K}}^j.
\end{aligned}
\end{equation}
Then, its second derivative $P''_K[Z]:=\frac{{\rm d}^2}{{\rm d}\lambda^2}\Big|_{\lambda=0}P_{K+\lambda Z}$ can be derived as 
\begin{equation}   
\label{eq.second_derivative_P}
\begin{aligned}
P''_K[Z]=S_1+\mathcal{A}_{K}^\top P''_K[Z]\mathcal{A}_{K}=\sum_{j=0}^{\infty}{\mathcal{A}_{K}^\top}^jS_1{\mathcal{A}_{K}}^j,
\end{aligned}
\end{equation}
where
\begin{equation}
\begin{aligned}
S_1:=&2\Big(C^\top  Z^\top (R+B^{\top} P_KB)ZC \\
&-(BZC)^\top P'_K[Z]\mathcal{A}_{K}-\mathcal{A}_{K}^\top P'_K[Z]BZC\Big).
\end{aligned}
\end{equation}
Furthermore, from \eqref{eq.quadaratic_of_hessian} and \eqref{eq.lyapunov_equation_sigma}, we can show that 
\begin{equation}  
\begin{aligned}
\nabla^2J(K)[Z,Z]&={\rm Tr}(\sum_{j=0}^{\infty}{\mathcal{A}_{K}^\top}^jS_1{\mathcal{A}_{K}}^jX_0)\\
&={\rm Tr}(S_1\sum_{j=0}^{\infty}{{\mathcal{A}_{K}}^jX_0\mathcal{A}_{K}^\top}^j)\\
&={\rm Tr}(S_1\Sigma_K)\\
&={\rm Tr}(2(ZC)^\top (B^{\top} P_KB + R)ZC \Sigma_K)\\
&~~~ - {\rm Tr}(4(BZC)^\top P'_K[Z]\mathcal{A}_{K} \Sigma_K).
\end{aligned}
\end{equation}
\end{proof}

\section{Cost Function Properties}
\label{sec.property}
Building upon the derived explicit formulas for the gradient and Hessian, we are now ready to discuss the optimization landscape for the SOF problem. This section develops some essential properties of the cost function, which will play an important role in the final convergence analysis. The intermediate lemmas required by the property analysis are provided in Appendix \ref{appen.intermediate lemma}.

\begin{lemma}[Coercive Property]
\label{lemma.coercivity}
\textnormal{The SOF cost \eqref{eq.cost_in_P} is coercive, that is, for all sequence $\{K_i\}_{i=1}^{\infty}\subseteq \mathbb{K}$, we have
\begin{equation}
\nonumber
J(K_i) \rightarrow +\infty,\quad {\rm if}\   K_i \rightarrow K \in \partial \mathbb{K}\   {\rm or}\  \|K_i\|\rightarrow +\infty.
\end{equation}}
\end{lemma}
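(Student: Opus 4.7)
The plan is to split the claim according to the two divergence modes, since they are driven by different mechanisms, and in each case reduce to a lower bound that can be read off from the trace form $J(K) = {\rm Tr}((Q + C^\top K^\top RKC)\Sigma_K)$. The key structural observation is that the Lyapunov identity \eqref{eq.lyapunov_equation_sigma} gives $\Sigma_K \succeq X_0 \succeq \mu I$, so $J(K)$ can be controlled from below either in terms of $\|K\|_F^2$ (for the unbounded-gain case) or in terms of ${\rm Tr}(P_K)$ via $J(K) = {\rm Tr}(P_K X_0) \ge \mu\,{\rm Tr}(P_K)$ (for the boundary case).

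For the unbounded-gain case $\|K_i\|\to +\infty$, I would discard the $Q$ contribution and replace $\Sigma_K$ by $X_0$ to obtain
\[
J(K) \;\ge\; {\rm Tr}\!\big(RK\,(CX_0C^\top)\,K^\top\big)
\;\ge\; \lambda_{\rm min}(R)\,\lambda_{\rm min}(CX_0C^\top)\,\|K\|_F^2,
\]
using the elementary inequality ${\rm Tr}(AB)\ge \lambda_{\rm min}(A)\,{\rm Tr}(B)$ for positive semidefinite $A,B$ applied twice. The prefactor is strictly positive because $R\succ 0$, $X_0\succ 0$, and $CX_0C^\top\succ 0$ under Assumption~\ref{assumption.control_observe} (independent rows of $C$ combined with $X_0\succ 0$). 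Hence $J(K_i)\to +\infty$ whenever $\|K_i\|\to +\infty$, uniformly over the position of $K_i$ inside $\mathbb{K}$.

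For the boundary case $K_i \to K\in\partial\mathbb{K}$, where necessarily $\rho(\mathcal{A}_K)=1$, I would argue by contradiction. Suppose $\{J(K_i)\}$ is bounded along some subsequence; then by $J(K)\ge \mu\,{\rm Tr}(P_K)$ the matrices $\{P_{K_i}\}$ are bounded in Frobenius norm, and a further subsequence converges to some $P\succeq 0$. Because $\mathcal{A}_{K_i}\to \mathcal{A}_K$ and the algebraic identity \eqref{eq.lyapunov_equation} is continuous in $(K,P)$, the limit satisfies $P = Q + C^\top K^\top RKC + \mathcal{A}_K^\top P\mathcal{A}_K$. Choosing a unit eigenvector $v\in\mathbb{C}^n$ of $\mathcal{A}_K$ with eigenvalue $\lambda$ on the unit circle and sandwiching the identity between $v^*$ and $v$, the $P$ terms cancel since $|\lambda|^2=1$, leaving $v^*(Q + C^\top K^\top RKC)v = 0$, which contradicts $Q\succ 0$. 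Therefore $J(K_i)\to +\infty$ along the original sequence.

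The main obstacle is the boundary case: one must work with the algebraic identity \eqref{eq.lyapunov_equation} rather than the series representation \eqref{eq.P_expand}, since the latter diverges precisely when $\rho(\mathcal{A}_K)=1$. The algebraic form, by contrast, is continuous in $(K,P)$ on all of $\mathbb{R}^{m\times d}\times \mathbb{S}^n_+$ and so survives the limit along the subsequence; the eigenvector step then converts the boundary spectral condition into a contradiction with $Q\succ 0$, without ever having to reason about an infinite sum.
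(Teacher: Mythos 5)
Your proof is correct, and it reaches the boundary case by a genuinely different route than the paper. For the unbounded-gain case the two arguments coincide up to notation: both drop the $Q$ term, use $\Sigma_K \succeq X_0$, and extract a lower bound proportional to $\|K\|_F^2$ with a positive prefactor coming from $R\succ0$ and the full row rank of $C$ (the paper writes it as $\mu\,\sigma_{\rm min}(R)\,\sigma_{\rm min}(C)^2\|K_i\|^2$, you as $\lambda_{\rm min}(R)\,\lambda_{\rm min}(CX_0C^\top)\|K\|_F^2$). For the boundary case the paper stays quantitative: it keeps the series representation \eqref{eq.P_expand} \emph{at the interior points} $K_i$, bounds $J(K_i)\ge \mu\,\sigma_{\rm min}(Q)\sum_{j\ge0}\rho(\mathcal{A}_{K_i})^{2j} = \mu\,\sigma_{\rm min}(Q)/(1-\rho(\mathcal{A}_{K_i})^2)$ using $\|\mathcal{A}_{K_i}^j\|_F\ge\rho(\mathcal{A}_{K_i})^j$, and then lets $\rho(\mathcal{A}_{K_i})\to1$ by continuity — so the divergence you worried about at $\rho=1$ never arises, since the closed form of the geometric series is extracted before the limit. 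Your argument instead is a soft compactness-plus-contradiction one: boundedness of $J$ along a subsequence forces $\{P_{K_i}\}$ to be bounded, a subsequential limit $P$ satisfies the algebraic Lyapunov identity at the boundary point, and sandwiching with a unit-circle eigenvector of $\mathcal{A}_K$ kills the $P$ terms and contradicts $Q\succ0$. Both are standard and valid (your eigenvector step is fine for complex $v$ since $\mathcal{A}_K$ is real, so $v^*\mathcal{A}_K^\top P\mathcal{A}_K v = |\lambda|^2 v^*Pv$, and $v^*Qv>0$). What each buys: the paper's version yields an explicit blow-up rate $J(K_i)\gtrsim \mu\,\sigma_{\rm min}(Q)/(1-\rho(\mathcal{A}_{K_i})^2)$, which is occasionally useful downstream; yours avoids any manipulation of matrix powers and transfers more readily to settings where bounding $\|\mathcal{A}^j\|$ by the spectral radius is inconvenient.
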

See Appendix \ref{append:proof_coercivity} for detailed proof. Based on the coercivity nature, we can obtain the compactness of the sublevel set.
\begin{lemma}[Compactness of Sublevel Set]
\label{lemma.compact}
\textnormal{Given a scalar $\alpha \ge J(K^{\star})$ with the globally optimal SOF gain $K^{\star}$, the sublevel set $\mathbb{K}_{\alpha}:=\{K | J(K)\le \alpha\} \subseteq \mathbb{K}$ is compact.}

\end{lemma}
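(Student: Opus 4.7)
The plan is to show that $\mathbb{K}_\alpha$ is closed and bounded in $\mathbb{R}^{m\times d}$ and that every element lies in $\mathbb{K}$; compactness then follows from Heine--Borel. The coercive property from Lemma \ref{lemma.coercivity} is the central tool, since it ensures $J$ blows up both at infinity and on the boundary $\partial\mathbb{K}$, thereby precluding sublevel sets from escaping to either.

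First I would observe that $\mathbb{K}_\alpha$ is nonempty ($K^\star$ belongs to it by hypothesis) and that $\mathbb{K}_\alpha \subseteq \mathbb{K}$: the series \eqref{eq.P_expand} defining $J(K)$ is only finite on $\mathbb{K}$, so adopting the convention $J(K)=+\infty$ for $K\notin\mathbb{K}$ automatically excludes non-stabilizing gains from any finite sublevel set. Next, for boundedness I would argue by contradiction: if there existed a sequence $\{K_i\}\subseteq\mathbb{K}_\alpha$ with $\|K_i\|\to+\infty$, then Lemma \ref{lemma.coercivity} would give $J(K_i)\to+\infty$, contradicting $J(K_i)\le\alpha$.

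For closedness, consider a convergent sequence $\{K_i\}\subseteq\mathbb{K}_\alpha$ with limit $K_\infty\in\mathbb{R}^{m\times d}$. Since $\rho(\mathcal{A}_K)<1$ is an open condition in $K$ (eigenvalues depend continuously on matrix entries), $\mathbb{K}$ is open, so $K_\infty\in\overline{\mathbb{K}}=\mathbb{K}\cup\partial\mathbb{K}$. The case $K_\infty\in\partial\mathbb{K}$ is again excluded by Lemma \ref{lemma.coercivity}, which would force $J(K_i)\to+\infty$. Hence $K_\infty\in\mathbb{K}$, and continuity of $J$ on $\mathbb{K}$ (which follows directly from \eqref{eq.P_expand}, a geometric series that converges uniformly in a neighborhood of any stabilizing point) yields $J(K_\infty)=\lim_i J(K_i)\le\alpha$, so $K_\infty\in\mathbb{K}_\alpha$.

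The argument involves no real obstacle beyond carefully invoking Lemma \ref{lemma.coercivity} under its two triggering scenarios, the unbounded-norm case and the approach-to-boundary case, each used exactly once. The only subtle point is verifying continuity of $J$ on $\mathbb{K}$, which is immediate from the explicit expression for $P_K$ but worth stating so that taking limits inside $J$ is justified.
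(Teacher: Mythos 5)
Your proposal is correct and follows essentially the same route as the paper: coercivity (Lemma \ref{lemma.coercivity}) gives boundedness, and continuity of $J$ plus exclusion of boundary limit points gives closedness. In fact your treatment of closedness is slightly more careful than the paper's, which simply asserts closedness from continuity of $J$ over $\mathbb{K}$ without explicitly invoking coercivity to rule out limit points on $\partial\mathbb{K}$ — a step that is genuinely needed since $\mathbb{K}$ is open.
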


\begin{proof}
Upon the coercivity proven in Lemma \ref{lemma.coercivity}, and referring to \cite[Proposition 11.12]{bauschke2011convex}, it becomes evident that the set $\mathbb{K}_{\alpha}$ is bounded.  Given the continuity of $J(K)$ over $\mathbb{K}$, it follows that $\mathbb{K}_{\alpha}$ is also closed, which completes the proof.
\end{proof}
With the compactness property in place, it becomes possible to demonstrate that the monotonicity of the objective function guarantees that the line segment between two neighboring iterations remains within $\mathbb{K}_{\alpha}$.

\begin{lemma}[Smoothness on Sublevel Set]
\label{lemma.L-smooth}
\textnormal{For all control gain $K$ in the sublevel set $\mathbb{K}_{\alpha}$, the norm of the Hessian of the cost function is bounded by a constant, i.e., $\|\nabla^2 J(\mathrm{vec}(K))\|\le L$, where 
\begin{equation}
\nonumber
L=\frac{2\alpha}{\sigma_{\rm min}(Q)}\left(\|R\|+\frac{\alpha}{\mu}\left(1+\frac{2\zeta_1}{\|B\|\|C\|}\right)\|B\|^2\right)\|C\|^2,
\end{equation}
with 
\begin{equation}
\label{eq.zeta1}
\zeta_1 = \frac{1}{\sigma_{\rm min}(Q)}\left(\frac{\alpha}{\mu}\left(1+\|B\|^2\|C\|^2\right)+\|R\|\|C\|^2 \right)-1.
\end{equation}
}
\end{lemma}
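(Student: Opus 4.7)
The plan is to leverage the explicit Hessian formula from Lemma~\ref{lamma:hessian} and obtain a uniform quadratic-form bound $|\nabla^2 J(K)[Z,Z]|\le L\|Z\|_F^2$ for every $K\in\mathbb{K}_\alpha$ and every direction $Z\in\mathbb{R}^{m\times d}$. Since $\nabla^2 J(K)[Z,Z]=\mathrm{vec}(Z)^\top \nabla^2 J(\mathrm{vec}(K))\,\mathrm{vec}(Z)$ and $\|\mathrm{vec}(Z)\|_2=\|Z\|_F$, such a quadratic-form bound is equivalent to the claimed operator-norm bound on the Hessian, so only the two traces in~(\ref{eq.Hessian formular}) need to be estimated.

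Two sublevel-set bounds, which I would extract from Appendix~\ref{appen.intermediate lemma}, drive the argument. First, $\|P_K\|\le\alpha/\mu$, following from $\alpha\ge J(K)={\rm Tr}(P_K X_0)\ge\sigma_{\rm min}(X_0)\|P_K\|=\mu\|P_K\|$ because $P_K\succeq 0$. Second, $\|\Sigma_K\|\le\alpha/\sigma_{\rm min}(Q)$, following from $\alpha\ge J(K)={\rm Tr}((Q+C^\top K^\top RKC)\Sigma_K)\ge\sigma_{\rm min}(Q)\|\Sigma_K\|$. Applying the PSD trace inequality $|{\rm Tr}(S\,\Sigma_K)|\le\|\Sigma_K\|\,{\rm Tr}(S)$ to the first Hessian term with $S=(ZC)^\top(B^\top P_K B+R)(ZC)\succeq 0$, together with sub-multiplicativity of the spectral norm and ${\rm Tr}(Z^\top Z)=\|Z\|_F^2$, yields a contribution of at most $2\|\Sigma_K\|(\|R\|+\|B\|^2\|P_K\|)\|C\|^2\|Z\|_F^2$. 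Substituting the two sublevel bounds recovers exactly the $\|R\|+(\alpha/\mu)\|B\|^2$ portion inside the stated $L$.

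The second trace is the crux. Bounding $|{\rm Tr}(4(BZC)^\top P'_K[Z]\mathcal{A}_K\Sigma_K)|$ requires (a)~a spectral-norm estimate on $P'_K[Z]$ that is linear in $\|Z\|_F$ and (b)~a bound on the product $\mathcal{A}_K\Sigma_K$. For~(a) I would start from the Lyapunov sum~(\ref{eq.derivative_P}), write $v^\top P'_K[Z]v=\sum_{j=0}^{\infty}(\mathcal{A}_K^j v)^\top(C^\top Z^\top E_K+E_K^\top ZC)(\mathcal{A}_K^j v)$ for any unit vector $v$, and estimate this by $2\|E_K\|\|C\|\|Z\|_F$ times $v^\top\sum_{j}(\mathcal{A}_K^\top)^j\mathcal{A}_K^j\,v$, the latter being controlled by $\|\Sigma_K\|/\mu$ via the dual Lyapunov sum and $X_0\succeq\mu I$. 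Companion bounds on $\|E_K\|$ and $\|\mathcal{A}_K\|$, derivable from $E_K=(R+B^\top P_K B)KC-B^\top P_K A$ and from the Lyapunov identity $P_K-Q-C^\top K^\top RKC=\mathcal{A}_K^\top P_K\mathcal{A}_K$ together with $\|P_K\|\le\alpha/\mu$, are exactly what produce the auxiliary constant $\zeta_1$ of~(\ref{eq.zeta1}). Applying $|{\rm Tr}(UV)|\le\|U\|_F\|V\|_F$ to the second Hessian term then contributes the remaining $(4\alpha^2\|B\|\|C\|\,\zeta_1)/(\mu\sigma_{\rm min}(Q))$ summand of $L$.

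Summing the two contributions gives $|\nabla^2 J(K)[Z,Z]|\le L\|Z\|_F^2$ with $L$ exactly as stated. The main obstacle is the second term: the spectral-norm bound on $P'_K[Z]$ and on $\mathcal{A}_K\Sigma_K$ must be threaded through the trace without introducing extraneous dimension factors, and the specific algebraic form of $\zeta_1$ must be matched via the Lyapunov identities encapsulated in Appendix~\ref{appen.intermediate lemma}. Once those intermediate bounds are in hand, the remainder of the proof is bookkeeping.
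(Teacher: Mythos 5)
Your treatment of the first trace is correct and recovers the paper's bound for that term exactly: the paper splits the trace the other way, as $\|(ZC)^\top(R+B^\top P_KB)ZC\|\,{\rm Tr}(\Sigma_K)$ rather than $\|\Sigma_K\|\,{\rm Tr}(S)$, but both give $\frac{\alpha}{\sigma_{\rm min}(Q)}\left(\|R\|+\frac{\alpha}{\mu}\|B\|^2\right)\|C\|^2$, using the same sublevel-set bounds $\|P_K\|\le\alpha/\mu$ and ${\rm Tr}(\Sigma_K)\le\alpha/\sigma_{\rm min}(Q)$ from Lemma \ref{lemma.upper_bound}. The gap is in the second trace, precisely where you flag the difficulty. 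Your plan is to bound $\|P'_K[Z]\|$ by $2\|E_K\|\,\|C\|\,\|Z\|_F$ times a bound on $\sum_j(\mathcal{A}_K^\top)^j\mathcal{A}_K^j$, and then to recover $\zeta_1$ from ``companion bounds on $\|E_K\|$ and $\|\mathcal{A}_K\|$.'' This route cannot produce the constant \eqref{eq.zeta1}: any direct norm bound on $E_K=(R+B^\top P_KB)KC-B^\top P_KA$ necessarily drags in $\|A\|$ and a bound on $\|KC\|$ (the quantity $\psi$ of Lemma \ref{lemma.K_upper_bound}), neither of which appears in $\zeta_1$ or in $L$. You would end up with a valid smoothness bound, but with a structurally different constant, not the one the lemma asserts.

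The missing idea is that the paper never bounds $\|E_K\|$ at this stage. It instead proves the operator inequality $P'_K[Z]\preceq\zeta_1 P_K$ by comparing the source terms of the two Lyapunov sums \eqref{eq.P_expand} and \eqref{eq.derivative_P}: expanding $C^\top Z^\top E_K+E_K^\top ZC$ via the definition of $E_K$ and $\mathcal{A}_K$, each cross term is absorbed by the matrix AM--GM inequality $X^\top Y+Y^\top X\preceq X^\top X+Y^\top Y$, which yields exactly $C^\top Z^\top RZC+C^\top K^\top RKC+\mathcal{A}_K^\top P_K\mathcal{A}_K+(BZC)^\top P_KBZC$. The middle two terms equal $P_K-Q$ by the Lyapunov equation \eqref{eq.lyapunov_equation}, so the source term is dominated by $\|P_K+C^\top Z^\top RZC+(BZC)^\top P_KBZC\|\,I-Q\preceq\zeta_1 Q\preceq\zeta_1(Q+C^\top K^\top RKC)$, and $P'_K[Z]\preceq\zeta_1 P_K$ follows termwise. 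This is what makes $\zeta_1$ free of $\|A\|$, $\psi$, and $\gamma$. Relatedly, the factor $\mathcal{A}_K\Sigma_K$ is handled without introducing $\|\mathcal{A}_K\|$, via Cauchy--Schwarz with $\Sigma_K^{1/2}$ and the monotonicity $\mathcal{A}_K\Sigma_K\mathcal{A}_K^\top\preceq\Sigma_K$ from \eqref{eq.lyapunov_equation_sigma}; your sketch leaves this step unspecified, and the naive estimate $\|\mathcal{A}_K\Sigma_K\|_F\le\|\mathcal{A}_K\|\|\Sigma_K\|_F$ would again spoil the stated constant.
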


\begin{proof}
From \eqref{eq.quadaratic_of_hessian}, applying the Taylor series expansion about direction $Z$, we can show that 
\begin{equation}
\nabla^2J(K)[Z,Z]=\mathrm{vec}(Z)^\top \nabla^2J(\rm vec(K)) \mathrm{vec}(Z).
\end{equation}
Since $\nabla^2 J(\mathrm{vec}(K))$ is symmetric, one has
\begin{equation}
\label{eq.supremum_of_hessian}
\begin{aligned}
\|\nabla^2 J(\mathrm{vec}(K))\|&=\sup_{\|Z\|_F=1}|\mathrm{vec}(Z)^\top \nabla^2J(\rm vec(K)) \mathrm{vec}(Z)|\\
&=\sup_{\|Z\|_F=1}|\nabla^2J(K)[Z,Z]|.
\end{aligned}
\end{equation}

Based on \eqref{eq.Hessian formular}, we further have 
\begin{equation}
\label{eq.upper_bound_of_hessian}
\begin{aligned}
\|\nabla^2 J&(\mathrm{vec}(K))\|\\
\le& 2\sup_{\|Z\|_F=1}|{\rm Tr}(C^\top Z^\top (R+B^{\top} P_KB)ZC \Sigma_K)|\\
&+4\sup_{\|Z\|_F=1}|{\rm Tr}((BZC)^\top P'_K[Z]\mathcal{A}_{K} \Sigma_K)|\\
=:&2q_1+4q_2.
\end{aligned}
\end{equation}
Actually, $q_1$ and $q_2$ are bounded above by
\begin{subequations}
\label{eq.upper_of_q1_q2}
\begin{align} 
&\begin{aligned}
\label{eq.upper_of_q1_final}
 q_1 \le \frac{J(K)}{\sigma_{\rm min}(Q)}\left(\|R\|+\frac{J(K)}{\mu}\|B\|^2\right)\|C\|^2,
 \end{aligned}\\
&\begin{aligned}
\label{eq.upper_of_q2_new_final}
q_2\le \frac{\zeta_{1}J(K)^2}{\mu\sigma_{\rm min}(Q)}\|B\|\|C\|.
\end{aligned}
\end{align} 
\end{subequations}
The detailed derivation of \eqref{eq.upper_of_q1_q2} is referred to Appendix \ref{appendix.bound}.

Plugging  \eqref{eq.upper_of_q1_q2} into \eqref{eq.upper_bound_of_hessian},  we finally complete the proof.
\end{proof}

In light of Lemma \ref{lemma.L-smooth}, consider any scalar $\delta\in [0,1]$ and any control gains $K$ and $K'$ residing in the sublevel set $\mathbb{K}_\alpha$. If any point along the segment defined by $(1 - \delta) K + \delta K'$ remains within this sublevel set, then the cost function has 
\begin{equation}
\label{eq.L-smooth}
\begin{aligned}
J(K')&\le J(K)+{\rm Tr}\left(\nabla J(K)^\top(K'-K)\right)+\frac{L}{2}\|K-K'\|_F^2.
\end{aligned}
\end{equation}
Moreover, if the cost function exhibits global $L$-smoothness, it is widely acknowledged that the gradient descent method can attain a stationary point with a gradient step complexity that is independent of the dimension \cite{polyak1963gradient,nesterov1998introductory}. However, the $L$-smooth property \eqref{eq.L-smooth} and its derived conclusions are not applicable to all control gains $K, K'\in \mathbb{K}_\alpha$ because the domain can be non-convex or even disconnected \cite{fatkhullin2020CTSOF}. 

Denote the output correlation matrix as 
\begin{equation}
\mathcal{L}_K:=C\Sigma_{K}C^\top=\mathbb{E}_{x_0\sim \mathcal{D}}\sum_{t=0}^{\infty}y_t y_t^\top.
\end{equation} 
Next, we will give the gradient domination condition for the fully observed case. These results are already established in the literature \cite{fazel2018global}; we provide a short proof in Appendix \ref{appendix.proof_dominance} for completeness.
\begin{lemma}[Gradient Domination]
\label{lemma.gradient_dominance}
\textnormal{Denote $\mathbb{C}:=\{C \in \mathbb{R}^{n\times n}: {\rm rank}(C)=n\}$. The globally optimal gain of the SOF problem and the globally optimal performance of the corresponding LQR problem with the state-feedback controller are denoted as $K^{\star}$ and $J_s^{\star}$, respectively. Assuming $X_0 \succ 0$ and that the control gain $K$ attains a finite performance, we can express the upper bound of the cost function for $K$ as 
\begin{equation}
\label{eq.gradient_dominance}
J(K)-J_s^{\star}\le \frac{\|\Sigma_{K^{\star}}\|\|\nabla J(K)\|_F^2}{4\mu^2\sigma_{\rm min}(C)^2\sigma_{\rm min}(R)}, \ \forall C\in \mathbb{C}.
\end{equation}
Additionally, we have the following lower bound 
\begin{equation}
\label{eq:lower_bound_2}
J(K)-J_s^{\star}\ge \frac{\mu{\rm Tr}\big(E_K^\top  E_K\big)}{\|R+B^\top P_K B\|}, \ \forall C \in \mathbb{C}.
\end{equation}
When $1\le{\rm rank}(C)<n$, one only has
\begin{equation}
\label{eq.gradient_dominance_1}
J(K)-J(K^{\star})\le \|\Sigma_{K^{\star}}\|{\rm Tr}\big( E_K^\top (R+B^\top P_K B)^{-1} E_K \big).
\end{equation}}
\end{lemma}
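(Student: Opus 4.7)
The plan is to derive all three bounds from one master identity, the cost-difference (value-difference) formula. For any two $K,K'\in\mathbb{K}$, I would unroll the Bellman recursion $V_K(x)=x^\top(Q+C^\top K^\top RKC)x+V_K(\mathcal{A}_Kx)$ along the $K$-trajectory and telescope against $V_{K'}$, obtaining $V_K(x_0)-V_{K'}(x_0)=\sum_{t=0}^{\infty}A_{K'}(x_t,-KCx_t)$, where $A_{K'}(x,u):=x^\top Qx+u^\top Ru+(Ax+Bu)^\top P_{K'}(Ax+Bu)-V_{K'}(x)$ is the one-step advantage. Expanding the quadratic forms and using $\mathcal{A}_K=\mathcal{A}_{K'}-B(K-K')C$ together with $E_{K'}=RK'C-B^\top P_{K'}\mathcal{A}_{K'}$, a direct computation collapses the advantage to $A_{K'}(x,-KCx)=x^\top[2E_{K'}^\top(K-K')C+C^\top(K-K')^\top W_{K'}(K-K')C]x$ with $W_{K'}:=R+B^\top P_{K'}B$. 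Taking expectation under $x_0\sim\mathcal{D}$ produces the working identity $J(K)-J(K')=\mathrm{Tr}(\Sigma_K[2E_{K'}^\top(K-K')C+C^\top(K-K')^\top W_{K'}(K-K')C])$.

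Next I complete the square in $(K-K')C$: the bracket equals $[(K-K')C+W_{K'}^{-1}E_{K'}]^\top W_{K'}[(K-K')C+W_{K'}^{-1}E_{K'}]-E_{K'}^\top W_{K'}^{-1}E_{K'}$. Applying this identity with the roles of $K$ and $K^\star$ swapped (so the outer weight becomes $\Sigma_{K^\star}$ and the reference policy is $K$), and discarding the nonnegative PSD term, I obtain $J(K)-J(K^\star)\le\mathrm{Tr}(\Sigma_{K^\star}E_K^\top W_K^{-1}E_K)\le\|\Sigma_{K^\star}\|\mathrm{Tr}(E_K^\top W_K^{-1}E_K)$, which is precisely \eqref{eq.gradient_dominance_1} and holds for every $C$ (including the rank-deficient case). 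For the full-rank case $C\in\mathbb{C}$, the SOF problem is equivalent to state-feedback LQR via the bijection $K\leftrightarrow KC$ on $\mathbb{R}^{m\times n}$, so $J(K^\star)=J_s^\star$. Further bounding $W_K^{-1}\preceq\sigma_{\min}(R)^{-1}I$ and combining with $\|\nabla J(K)\|_F^2=4\|E_K\Sigma_KC^\top\|_F^2\ge4\mu^2\sigma_{\min}(C)^2\mathrm{Tr}(E_K^\top E_K)$ (two successive applications of $\|MN\|_F\ge\sigma_{\min}(N)\|M\|_F$ for square-invertible $N$, together with $\Sigma_K\succeq X_0$) delivers \eqref{eq.gradient_dominance}.

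For the lower bound \eqref{eq:lower_bound_2}, I will exploit the equality case of the completion of squares. Invertibility of $C$ lets me define the one-step policy-iteration update $K':=K-W_K^{-1}E_KC^{-1}\in\mathbb{R}^{m\times n}$, so that $(K'-K)C=-W_K^{-1}E_K$. Since $K'C$ is exactly the state-feedback policy-iteration step from $KC$, classical LQR theory (Hewer) guarantees $K'\in\mathbb{K}$. Substituting $(K'-K)C=-W_K^{-1}E_K$ into the working identity collapses the bracket to $-E_K^\top W_K^{-1}E_K$, yielding $J(K)-J(K')=\mathrm{Tr}(\Sigma_{K'}E_K^\top W_K^{-1}E_K)\ge\mu\mathrm{Tr}(E_K^\top W_K^{-1}E_K)\ge\mu\|W_K\|^{-1}\mathrm{Tr}(E_K^\top E_K)$, using $\sigma_{\min}(\Sigma_{K'})\ge\mu$ (from $\Sigma_{K'}\succeq X_0$) and $W_K^{-1}\succeq\|W_K\|^{-1}I$. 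Because $K'\in\mathbb{K}$ and $J_s^\star=\inf_{K\in\mathbb{K}}J(K)$ under the same equivalence, $J(K')\ge J_s^\star$, and the chain $J(K)-J_s^\star\ge J(K)-J(K')\ge\mu\mathrm{Tr}(E_K^\top E_K)/\|W_K\|$ completes the argument.

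The principal obstacle is the stabilizing guarantee $K'\in\mathbb{K}$ in the lower-bound step: without it, $J(K')$ is not finite and the chained inequality $J(K)-J_s^\star\ge J(K)-J(K')$ is meaningless. This reduces to the classical state-feedback policy-iteration monotonicity result, which I would settle either by citing Hewer or by a direct Lyapunov check that $P_K$ serves as a Lyapunov matrix for $\mathcal{A}_{K'}$. All other steps are algebraic: the Bellman expansion, the quadratic completion of squares, and standard norm inequalities.
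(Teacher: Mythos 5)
Your proposal is correct and follows essentially the same route as the paper: the cost-difference (advantage telescoping) identity, a completion of squares to get \eqref{eq.gradient_dominance_1}, the bound $\|\nabla J(K)\|_F^2\ge 4\mu^2\sigma_{\rm min}(C)^2{\rm Tr}(E_K^\top E_K)$ for the full-rank case, and the one-step policy-improvement update $K'$ with $(K'-K)C=-(R+B^\top P_KB)^{-1}E_K$ for the lower bound. The only (favorable) differences are cosmetic and in rigor: you complete the square directly in the variable $(K-K')C$ rather than via the paper's $\mathcal{L}_{K'}^{-1}$-weighted correction term $X$ (which for invertible $C$ reduces to your choice anyway), and you explicitly flag that $K'\in\mathbb{K}$ must be verified via Hewer-type policy-iteration stability --- a point the paper's proof leaves implicit.
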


The concept of gradient dominance is crucial for achieving global convergence in gradient descent algorithms, as it signifies that no stationary points exist aside from the global minimum \cite{polyak1963gradient,lojasiewicz1963topological}. Nevertheless, when $C$ is not a full-rank square matrix, this property ceases to be valid (see \cite[Example 3.4]{fatkhullin2020CTSOF}), making it challenging to achieve results beyond convergence towards a stationary point. Such limitations on gradient dominance extend to dynamic output-feedback controllers as well, as the set of stabilizing controllers contains at most two disconnected components \cite{tang2021analysis,duan2023optimization}.

\begin{lemma}[$M$-Lipschitz Continuous Hessian]
\label{lemma.continue_Hessian}
\textnormal{
For any control gain $K$ in the sublevel set $\mathbb{K}_\alpha$, define $\gamma:=\max_{K\in \mathbb{K}_{\alpha}} \|\mathcal{A}_{K}\|$ and denote the upper bound of $\|KC\|$ as $\psi$, whose explicit form is given Lemma \ref{lemma.K_upper_bound}. Given any scalar $\delta\in [0,1]$ and any control gains $K$ and $K'$ in the sublevel set, if any point along the segment between these two gains, represented as $(1 - \delta) K + \delta K'$, remains within the sublevel set, then the Hessian of the cost function satisfies
\begin{equation}
\label{eq.M_Hessian}
\begin{aligned}
\|\nabla^2 J(\mathrm{vec}(K'))-\nabla^2 J(\mathrm{vec}(K))\|_F\le M \|K'-K\|_F,
\end{aligned}
\end{equation}
where 
\begin{equation}
\nonumber
M =     \frac{4\alpha^2\sqrt{md}}{\mu\sigma_{\rm min}(Q)}\left(\big(\zeta_1+\frac{\zeta_2}{2}\big)\|B\|\|C\|+\zeta_3+\frac{\zeta_4}{2}\right)\|B\|\|C\|,
\end{equation}
$\zeta_1$ is defined in \eqref{eq.zeta1}, and other intermediate parameters are 
\begin{equation}
\begin{aligned}
\nonumber
\zeta_2 &= \frac{2\|C\|}{\sigma_{\rm min}(Q)}\left(\frac{\alpha\gamma}{\mu}\|B\| + \psi\|R\|\right),\\
\zeta_3 &= \frac{2\|C\|}{\sigma_{\rm min}(Q)}\left(\frac{\alpha}{\mu}(\zeta_1\gamma + \zeta_2\gamma + \|B\|\|C\|)\|B\| + \|R\|\|C\|\right),\\
\zeta_4 &= \frac{2\|C\|}{\sigma_{\rm min}(Q)} \left(\frac{\alpha}{\mu}(\zeta_1\gamma + \|B\|\|C\|)\|B\| + \|R\|\|C\|\right).
\end{aligned}
\end{equation}}
\end{lemma}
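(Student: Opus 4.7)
My plan is to prove the Lipschitz Hessian bound by direct expansion, following the strategy advertised in the introduction (no tensor manipulations). The starting point is the conversion between Frobenius and operator norms: since $\nabla^2 J(\mathrm{vec}(K))$ is a symmetric $md\times md$ matrix,
\begin{equation*}
\|\nabla^2 J(\mathrm{vec}(K'))-\nabla^2 J(\mathrm{vec}(K))\|_F \le \sqrt{md}\,\sup_{\|Z\|_F=1}\bigl|\nabla^2 J(K')[Z,Z]-\nabla^2 J(K)[Z,Z]\bigr|,
\end{equation*}
which explains the $\sqrt{md}$ factor in $M$. Using Lemma \ref{lamma:hessian}, each quadratic form splits into the two summands
\begin{equation*}
T_1(K):=2\,\mathrm{Tr}((ZC)^\top(B^\top P_K B+R)ZC\,\Sigma_K),\qquad T_2(K):=4\,\mathrm{Tr}((BZC)^\top P_K'[Z]\mathcal{A}_K\Sigma_K),
\end{equation*}
so that the task reduces to bounding $|T_i(K')-T_i(K)|$ by a multiple of $\|K'-K\|_F$ for $i=1,2$, uniformly in $Z$ with $\|Z\|_F=1$.

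The next step is a repeated application of the identity $XY-X'Y'=(X-X')Y+X'(Y-Y')$ (and its three-factor version) to peel off one varying matrix at a time. For $T_1$ the relevant varying factors are $P_K$ and $\Sigma_K$; for $T_2$ they are $P_K$, $\Sigma_K$, $\mathcal{A}_K=A-BKC$ (which depends linearly and hence trivially Lipschitz in $K$), and the derivative object $P_K'[Z]$. At each varying factor I would substitute a first-order Lipschitz bound of the form $\|P_{K'}-P_K\|\le c_P\|K'-K\|_F$ and $\|\Sigma_{K'}-\Sigma_K\|\le c_\Sigma\|K'-K\|_F$ on the sublevel set $\mathbb{K}_\alpha$, which I would invoke from the intermediate lemmas in Appendix \ref{appen.intermediate lemma}; the constants there are expressed in terms of $\alpha$, $\mu$, $\sigma_{\min}(Q)$, $\|B\|$, $\|C\|$, and the sublevel-set quantities $\gamma=\max_{K\in\mathbb{K}_\alpha}\|\mathcal{A}_K\|$ and $\psi=\max_{K\in\mathbb{K}_\alpha}\|KC\|$ supplied by Lemmas \ref{lemma.L-smooth}--\ref{lemma.continue_Hessian}. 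The remaining unvaried factors are uniformly bounded on $\mathbb{K}_\alpha$ by Lemma \ref{lemma.compact}, so the triangle-inequality expansion produces an estimate linear in $\|K'-K\|_F$ with a coefficient determined by products of these bounds, ultimately matching the $\zeta_1,\zeta_2,\zeta_3,\zeta_4$ combinations stated in the lemma.

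The genuine obstacle is the term involving $P'_K[Z]$, because it is itself an infinite series in $\mathcal{A}_K^j$ with inner factor $C^\top Z^\top E_K+E_K^\top ZC$, and \emph{both} the propagator $\mathcal{A}_K^j$ and the residual $E_K=(R+B^\top P_KB)KC-B^\top P_KA$ depend on $K$. To handle this I would write
\begin{equation*}
P_{K'}'[Z]-P_K'[Z]=\sum_{j=0}^{\infty}\Bigl({\mathcal{A}_{K'}^\top}^j\Delta_{K'}\mathcal{A}_{K'}^j-{\mathcal{A}_{K}^\top}^j\Delta_{K}\mathcal{A}_{K}^j\Bigr),
\end{equation*}
with $\Delta_K:=C^\top Z^\top E_K+E_K^\top ZC$, and again use the $XY-X'Y'$ telescoping to separate the change in $\Delta_K$ (which reduces to a Lipschitz bound on $E_K$, obtainable from the Lipschitz bounds on $P_K$ and linearity in $K$) from the change in the propagator. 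For the latter I would view the whole series as the solution of the Lyapunov equation \eqref{eq.P_derivative} and subtract two such equations, obtaining
\begin{equation*}
(P_{K'}'[Z]-P_K'[Z])-\mathcal{A}_{K'}^\top(P_{K'}'[Z]-P_K'[Z])\mathcal{A}_{K'}=(\Delta_{K'}-\Delta_K)+\mathcal{A}_{K'}^\top P_K'[Z]\mathcal{A}_{K'}-\mathcal{A}_K^\top P_K'[Z]\mathcal{A}_K,
\end{equation*}
which lets me bound $\|P_{K'}'[Z]-P_K'[Z]\|$ by $\sum_j\|\mathcal{A}_{K'}\|^{2j}$ times a linear-in-$\|K'-K\|_F$ term; the geometric sum is controlled through $\|\Sigma_{K'}\|\le J(K')/(\mu\sigma_{\min}(Q))\cdot\|C\|^{-2}$ type bounds from Lemma \ref{lemma.compact} and the preceding estimates on $\Sigma_K$. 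Gathering every constant and tracking the factors $\|B\|$, $\|C\|$, $\alpha$, $\mu$, $\sigma_{\min}(Q)$, $\gamma$, $\psi$ that appear in the expansion then yields the stated coefficient $M$ involving $\zeta_1,\zeta_2,\zeta_3,\zeta_4$.
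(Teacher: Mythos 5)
Your route is genuinely different from the paper's. The paper does not telescope the difference $\nabla^2J(K')[Z,Z]-\nabla^2J(K)[Z,Z]$ directly; it parametrizes $\bar K=(1-\delta)K+\delta K'$, sets $g(\delta)=\nabla^2J(\bar K)[Z,Z]$, writes the difference as $\int_0^1 g'(\delta)\,{\rm d}\delta$, and bounds the mixed third derivative $\sup_{\|Z\|_F=1}|g'(\delta)|$ uniformly in $\delta$ by establishing the relative operator bounds $P'_{\bar K}[Z]\preceq\zeta_1P_{\bar K}$, $\partial P_{\bar K}/\partial\delta\preceq\zeta_2\|\Delta K\|P_{\bar K}$, $\partial P'_{\bar K}[Z]/\partial\delta\preceq\zeta_3\|\Delta K\|P_{\bar K}$, $P''_{\bar K}[Z]\preceq\zeta_4P_{\bar K}$, and then invoking $\|P_{\bar K}\|\le\alpha/\mu$ and ${\rm Tr}(\Sigma_{\bar K})\le\alpha/\sigma_{\rm min}(Q)$. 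This is why the hypothesis that the whole segment stays in $\mathbb{K}_\alpha$ appears in the statement, and it is also exactly how the specific constants $\zeta_1,\dots,\zeta_4$ arise. Your endpoint-difference strategy is viable in principle (and would, if completed, prove a statement not needing the segment hypothesis), but it will produce a different constant; the assertion that your expansion "ultimately matches" the stated $M$ is not something you can expect or have justified.

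Two concrete gaps remain. First, your argument hinges on perturbation bounds of the form $\|P_{K'}-P_K\|\le c_P\|K'-K\|_F$ and $\|\Sigma_{K'}-\Sigma_K\|\le c_\Sigma\|K'-K\|_F$ on $\mathbb{K}_\alpha$, which you attribute to the intermediate lemmas of the appendix; those lemmas (Lemma~\ref{lemma.upper_bound} and Lemma~\ref{lemma.K_upper_bound}) only give \emph{magnitude} bounds $\|P_K\|\le J(K)/\mu$, ${\rm Tr}(\Sigma_K)\le J(K)/\sigma_{\rm min}(Q)$, $\|KC\|\le\psi$ — no Lipschitz estimates exist in the paper. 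You would have to prove them, e.g.\ by the same Lyapunov-subtraction device you sketch for $P'_K[Z]$. Second, bounding the resulting series by "$\sum_j\|\mathcal{A}_{K'}\|^{2j}$ times a linear term" fails as written: a stabilizing $\mathcal{A}_{K'}$ satisfies $\rho(\mathcal{A}_{K'})<1$ but may well have $\|\mathcal{A}_{K'}\|\ge1$ (indeed the paper's $\gamma=\max_K\|\mathcal{A}_K\|$ is never assumed below one), so that scalar geometric series can diverge. The sum must instead be controlled in the positive-semidefinite order, $\sum_j{\mathcal{A}_{K'}^\top}^jW{\mathcal{A}_{K'}}^j\preceq\|W\|\sum_j{\mathcal{A}_{K'}^\top}^j{\mathcal{A}_{K'}}^j\preceq\|W\|P_{K'}/\sigma_{\rm min}(Q)$, which is what the paper does throughout and what your parenthetical about $\|\Sigma_{K'}\|$-type bounds gestures at; make that the actual argument rather than the scalar sum.
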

\begin{proof}
Similar to \eqref{eq.supremum_of_hessian}, since $\nabla^2 J(\mathrm{vec}(K))-\nabla^2 J(\mathrm{vec}(K'))$ is symmetric, we have
\begin{equation}
\begin{aligned}
&\|\nabla^2 J(\mathrm{vec}(K))-\nabla^2 J(\mathrm{vec}(K'))\|\\
&=\sup_{\|Z\|_F=1}|\nabla^2J(K)[Z,Z]-\nabla^2J(K')[Z,Z]|.
\end{aligned}
\end{equation}
By \eqref{eq.quadaratic_of_hessian}, we define
\begin{equation}
g(\delta):=\nabla^2 J((1 - \delta) K + \delta K')[Z,Z],
\end{equation}
and denote 
$\Bar{K}:=K+\delta(K'-K)$, $\Delta K:= K'-K$. Then, from \eqref{eq.quadaratic_of_hessian}, one has
\begin{equation}  
\label{eq.g_prime}
\begin{aligned}
g'(\delta)&= {\rm Tr}(\frac{\partial^3}{\partial\lambda^2\partial \delta}\Big|_{\lambda=0}P_{K+\delta(K'-K)+\lambda Z}X_0).
\end{aligned}
\end{equation}
By the fundamental theorem of calculus, it follows that
\begin{equation}
\label{eq.hessian_diff_calculus}
\begin{aligned}
\|\nabla^2 J(\mathrm{vec}(K))-\nabla^2 J(\mathrm{vec}(K'))\|&=\sup_{\|Z\|_F=1}|g(0)-g(1)|\\
&=\sup_{\|Z\|_F=1}|\int_{0}^{1}g'(\delta){\rm d}\delta|\\
&\le\int_{0}^{1}\sup_{\|Z\|_F=1}|g'(\delta)|{\rm d}\delta.
\end{aligned}
\end{equation}
Based on \eqref{eq.second_derivative_P}, we can observe that 
\begin{equation}  
\begin{aligned}
&\frac{\partial^3}{\partial\lambda^2\partial \delta}\Big|_{\lambda=0}P_{K+\delta(K'-K)+\lambda Z} = \sum_{j=0}^{\infty}{\mathcal{A}_{\bar{K}}^\top}^jS_2{\mathcal{A}_{\bar{K}}}^j,
\end{aligned}
\end{equation}
where 
\begin{equation}   
\begin{aligned}
&S_2:=2C^\top Z^\top B^{\top} \frac{\partial P_{\bar{K}}}{\partial \delta}BZC + 2C^\top Z^\top B^{\top} P'_{\bar{K}}[Z] B\Delta K C\\
& \qquad + 2C^\top \Delta K^\top B^{\top} P'_{\bar{K}}[Z] BZ C-2(BZC)^\top\frac{\partial P'_{\bar{K}}[Z]}{\partial \delta}{\mathcal{A}_{\bar{K}}} \\
&\qquad-2\mathcal{A}_{\bar{K}}^\top\frac{\partial P'_{\bar{K}}[Z]}{\partial \delta}BZC -  (B\Delta KC)^\top P''_{\bar{K}}[Z]{\mathcal{A}_{\bar{K}}}\\
& \qquad -\mathcal{A}_{\bar{K}}^\top P''_{\bar{K}}[Z]B\Delta K C.
\end{aligned}
\end{equation}
According to \eqref{eq.g_prime}, it follows that 
\begin{equation}  
\begin{aligned}
g'(\delta)
&={\rm Tr}(2(BZC)^\top\frac{\partial P_{\bar{K}}}{\partial \delta}BZC \Sigma_{\bar {K}})\\
&~~~ +{\rm Tr}(4(BZC)^\top P'_{\bar{K}}[Z] B\Delta K C\Sigma_{\bar {K}})\\
&~~~ -{\rm Tr}(4(BZC)^\top\frac{\partial P'_{\bar{K}}[Z]}{\partial \delta}{\mathcal{A}_{\bar{K}}}\Sigma_{\bar {K}})\\
&~~~ -{\rm Tr}(2(B\Delta KC)^\top P''_{\bar{K}}[Z]{\mathcal{A}_{\bar{K}}}\Sigma_{\bar {K}}).
\end{aligned}
\end{equation}
Similar to the derivation of \eqref{eq.upper_of_q1_q2}, we can further show that 
\begin{equation}  
\begin{aligned}
\sup_{\|Z\|_F=1}|g'(\delta)|&\le2\|B\|^2\|C\|^2\|\frac{\partial P_{\bar{K}}}{\partial \delta}\|{\rm Tr}(\Sigma_{\bar {K}})\\
&\quad +4\|B\|^2\|C\|^2\|P'_{\bar{K}}[Z]\|{\rm Tr}(\Sigma_{\bar {K}})\|\Delta K\|\\
&\quad +4\|B\|\|C\|\|\frac{\partial P'_{\bar{K}}[Z]}{\partial \delta}\|{\rm Tr}(\Sigma_{\bar {K}})\\
&\quad +2\|B\|\|C\|\|P''_{\bar{K}}[Z]\|{\rm Tr}(\Sigma_{\bar {K}})\|\Delta K\|.
\end{aligned}
\end{equation}

According to Lemma \ref{lemma.L-smooth}, we know that $P'_{\bar K}[Z] \preceq \zeta_1 P_{\bar K}$. As a matter of fact, we can also show that $\frac{\partial P_{\bar{K}}}{\partial \delta} \preceq \zeta_2 \|\Delta K\|P_{\bar{K}}$, $\frac{\partial P'_{\bar{K}}[Z]}{\partial \delta} \preceq \zeta_3 \|\Delta K\|P_{\bar{K}}$,  and $P''_{\bar{K}}[Z]\preceq \zeta_4 P_{\bar{K}}$ (see Appendix \ref{appendix.continue_Hessian} for detailed derivations). Utilizing the results of Lemma \ref{lemma.upper_bound}, we can further show that 
\begin{equation}  
\label{eq.third_derivative}
\begin{aligned}
&\sup_{\|Z\|_F=1}|g'(\delta)|\\
&\le2\|B\|\|C\|\big((2\zeta_1+\zeta_2)\|B\|\|C\|+2\zeta_3+\zeta_4\big)\frac{\alpha^2\|\Delta K\|}{\mu\sigma_{\rm min}(Q)}.
\end{aligned}
\end{equation}
Plugging \eqref{eq.third_derivative} into \eqref{eq.hessian_diff_calculus} and remembering $\|X\|\le \|X\|_F \le \sqrt{{\rm rank}(X)}\|X\|$, we finally complete the proof.
\end{proof}

To the best of our knowledge, the Lipschitz continuity of the Hessian for the SOF cost function has not been previously examined. Nonetheless, this discovery is notable for enhancing the convergence towards a local minimum in non-convex optimization scenarios, under relatively mild conditions \cite{nesterov1998introductory}. Moreover, recent studies \cite{jin2017escape,ge2015escaping,carmon2018accelerated} indicate that the Hessian Lipschitz property facilitates efficient navigation away from strict saddle points in general gradient-based non-convex optimization problems.

\begin{remark}
\label{remark.lemma_difference}
\textnormal{The coercive property, compactness of the sublevel set, and $L$-smoothness of the cost function in the SOF problem, can be deemed as partially observed counterparts to the properties of the state-feedback LQR cost. The associated proofs follow similar lines as the state-feedback LQR case \cite{bu2019lqr,jansch2020Mjump}. Different from these properties, to the best of our knowledge, we are the first to establish the $M$-Lipschitz continuous Hessian in both SOF and state-feedback LQR problems. Notably, this property cannot be straightforwardly derived using methods akin to those employed for establishing $L$-smoothness \cite{jansch2020Mjump}. This is because the analysis of $\nabla^3 J(\mathrm{vec}(K))$ necessitates complicated tensor operations. To circumvent these tensor-related complexities, we directly establish the $M$-Lipschitz continuous Hessian by adhering to the Lipschitz continuity definition \eqref{eq.M_Hessian}.} 
\end{remark}

\section{Convergence}
\label{sec:convergence}
This section presents new convergence findings for three variants of policy gradient methods applied to SOF: the vanilla policy gradient, the natural policy gradient, and the Gauss-Newton method. These three methods have been extensively analyzed in related studies \cite{fazel2018global,jansch2020Mjump,bu2019lqr}. Given that the cost function $J(K)$ is non-convex, the properties outlined in Section \ref{sec.property} play a crucial role in facilitating the convergence analysis for these policy gradient methods.

\subsection{Vanilla Policy Gradient}
The vanilla policy gradient method iterates as follows
\begin{equation}
\label{eq.pg}
K_{i+1}=K_i-\eta \nabla J(K_i),
\end{equation}
with the initial gain $K_0 \in \mathbb{K}$ and  the step size $\eta$. If the line segment $[K_i, K_{i+1}]$ is verified to lie within a sublevel set, then the convergence of the iteration gain obtained by \eqref{eq.pg} can be directly inferred by leveraging the $L$-smoothness property specified in Lemma \ref{lemma.L-smooth} \cite{nesterov1998introductory}. Before we proceed to the main proofs, let us give the following definition.

\begin{definition}
\textnormal{Given a differentiable function $J(\cdot)$, if $\|\nabla J(K)\|_F\le \epsilon$, $K$ is an $\epsilon$-stationary point.} 
\end{definition}

\begin{theorem}
\label{theorem: first_order_convergence}
\textnormal{
Assume that $J(K_0)=\alpha$ and $X_0\succ 0$. If we run the vanilla policy gradient method \eqref{eq.pg} with any step size $\eta \in (0,\:1/L]$, $J(K_i)$ is monotonically diminishing (which indicates $K_{i}\in \mathbb{K}_\alpha \subseteq \mathbb{K}$, i.e., $K_i$ is stabilizing), and an $\epsilon$-stationary point will be obtained in  
\begin{equation}
\label{eq.nearly_converge_rate}
\frac{2\alpha}{\eta\epsilon^2}
\end{equation}
iterations. Additionally, for each iteration $i$, the line segment $[K_i,K_{i+1}] \subseteq \mathbb{K}_\alpha$. If $C$ is full rank, an $\epsilon_J$-optimal gain $K_N$, i.e., $J(K_N)-J_s^{\star} \le \epsilon_J$, is obtained when the iteration step  
\begin{equation}
\label{eq.converge_rate}
N\ge \frac{\|\Sigma_{K^{\star}}\|}{2\eta\mu^2\sigma_{\rm min}(C)^2\sigma_{\rm min}(R)}\log \left(\frac{J(K_0)-J_s^{\star}}{\epsilon_J}\right).
\end{equation}}
\end{theorem}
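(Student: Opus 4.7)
The plan is to proceed by induction on $i$, simultaneously establishing three facts: (i) $K_i \in \mathbb{K}_\alpha$ (so the iterate is stabilizing), (ii) the line segment $[K_i, K_{i+1}] \subseteq \mathbb{K}_\alpha$, and (iii) the per-step descent inequality $J(K_{i+1}) \leq J(K_i) - \frac{\eta}{2}\|\nabla J(K_i)\|_F^2$. The base case $K_0 \in \mathbb{K}_\alpha$ is immediate from $J(K_0) = \alpha$. The hard part — and the step I expect to cause the most trouble — is (ii): since $\mathbb{K}_\alpha$ is non-convex and possibly disconnected, segment containment cannot be obtained from convexity, and the $L$-smoothness inequality \eqref{eq.L-smooth} derived from Lemma \ref{lemma.L-smooth} is only guaranteed when the segment already lies inside $\mathbb{K}_\alpha$.

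For the inductive step I would parametrize the segment as $K(t) := K_i - t\eta \nabla J(K_i)$, $t \in [0,1]$, and let $T^\star := \sup\{T \in [0,1] : K(s) \in \mathbb{K}_\alpha \text{ for every } s \in [0,T]\}$. For any $t < T^\star$ the segment $[K_i, K(t)]$ is contained in $\mathbb{K}_\alpha$, so \eqref{eq.L-smooth} applies and yields $J(K(t)) \leq J(K_i) - t\eta(1 - tL\eta/2)\|\nabla J(K_i)\|_F^2$. With $\eta \leq 1/L$ and $t \in [0,1]$ the factor $1 - tL\eta/2 \geq 1/2$, so $J(K(t)) \leq J(K_i) - \frac{t\eta}{2}\|\nabla J(K_i)\|_F^2 \leq \alpha$. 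Letting $t \uparrow T^\star$ and using continuity of $J$ on the open set $\mathbb{K}$ preserves the bound at $T^\star$; whenever $\nabla J(K_i) \neq 0$ it is strict, so $K(T^\star)$ lies in the relative interior of $\mathbb{K}_\alpha$ inside $\mathbb{K}$. Coercivity and compactness (Lemmas \ref{lemma.coercivity} and \ref{lemma.compact}) guarantee that $\mathbb{K}_\alpha$ is bounded away from $\partial\mathbb{K}$, so by continuity of $K(\cdot)$ the interval can be extended slightly past $T^\star$ unless $T^\star = 1$. This gives (ii), and evaluating at $t=1$ gives (iii) together with $K_{i+1} \in \mathbb{K}_\alpha$, closing the induction. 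Summing (iii) over $i = 0, \ldots, N-1$ and using $J(K_N) \geq 0$ produces $\sum_{i=0}^{N-1}\|\nabla J(K_i)\|_F^2 \leq 2\alpha/\eta$, so the best iterate is $\epsilon$-stationary within $2\alpha/(\eta\epsilon^2)$ steps; the only dimension dependence is buried inside $L$, hence ``nearly dimension-free''.

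For the final claim, I would combine the per-step descent (iii) with the gradient-dominance bound \eqref{eq.gradient_dominance} of Lemma \ref{lemma.gradient_dominance}, which is available since $C$ is assumed to have full rank. Substituting $\|\nabla J(K_i)\|_F^2 \geq \frac{4\mu^2 \sigma_{\rm min}(C)^2 \sigma_{\rm min}(R)}{\|\Sigma_{K^\star}\|}\bigl(J(K_i) - J_s^\star\bigr)$ into (iii) turns the descent into the linear contraction $J(K_{i+1}) - J_s^\star \leq (1-\beta)(J(K_i) - J_s^\star)$ with $\beta := 2\eta\mu^2\sigma_{\rm min}(C)^2\sigma_{\rm min}(R)/\|\Sigma_{K^\star}\|$. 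Iterating and applying the elementary bound $-\log(1-\beta) \geq \beta$ to $(1-\beta)^N(J(K_0) - J_s^\star) \leq \epsilon_J$ then yields the asserted step count \eqref{eq.converge_rate}, completing the proof.
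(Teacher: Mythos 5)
Your proof is correct, and it reaches the same per-step descent inequality and the same telescoping/gradient-dominance endgame as the paper, but it handles the one genuinely delicate step --- showing that the segment $[K_i, K_{i+1}]$ stays inside the (non-convex, possibly disconnected) sublevel set so that \eqref{eq.L-smooth} may be invoked --- by a different mechanism. The paper works in a slightly inflated sublevel set $\mathbb{K}_{\alpha+\varsigma}$ with Hessian bound $(1+\phi)L$, uses the positive distance $d$ between $\mathbb{K}_\alpha$ and the complement of $\mathbb{K}_{\alpha+\varsigma}^o$ to justify sufficiently small gradient substeps, and then pieces a full step of size $\eta\le 1/L$ together from $T$ such substeps by induction. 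You instead run a continuation argument along the single segment $K(t)=K_i-t\eta\nabla J(K_i)$: define $T^\star$ as the supremal parameter up to which the path stays in $\mathbb{K}_\alpha$, observe that descent forces $J(K(T^\star))<\alpha$ strictly whenever $\nabla J(K_i)\neq 0$ (the case $\nabla J(K_i)=0$ being trivial), and conclude from openness of $\{J<\alpha\}$ in $\mathbb{K}$ that $T^\star<1$ is impossible. This is cleaner --- it dispenses with $\phi$, $\varsigma$, and the substep induction --- at the cost of needing compactness of $\mathbb{K}_\alpha$ (Lemma \ref{lemma.compact}) to guarantee that the limit point $K(T^\star)$ remains in $\mathbb{K}$ rather than escaping to $\partial\mathbb{K}$; your attribution of the extension step to ``$\mathbb{K}_\alpha$ bounded away from $\partial\mathbb{K}$'' is slightly loose (the extension past $T^\star$ really comes from $K(T^\star)$ lying in the open set $\{K\in\mathbb{K}: J(K)<\alpha\}$), but the needed facts are all in place. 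Your final remark that \eqref{eq.converge_rate} follows from the contraction via $-\log(1-\beta)\ge\beta$ is a detail the paper leaves implicit, and is correct.
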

\begin{proof}
We first define an open set $\mathbb{K}_{\alpha}^o:=\{K | J(K)< \alpha\} \subseteq \mathbb{K}$, whose complement $(\mathbb{K}_{\alpha}^o)^c$ is a closed set. By invoking Lemma~\ref{lemma.L-smooth}, for a given $\phi\in(0,1)$, there is a positive number $\varsigma$ so that $\|\nabla^2J(\mathrm{vec}(K_i))\| \le L < L + \phi L$ holds for all $K_i \in \mathbb{K}_{\alpha} \subset \mathbb{K}_{\alpha+\varsigma}$. 

Due to the compactness of $\mathbb{K}_{\alpha}$ established by Lemma \ref{lemma.compact}, the distance between $\mathbb{K}_{\alpha}$ and $(\mathbb{K}_{\alpha+\varsigma}^o)^c$, represented by ${d} = \inf\{\|K_i - K_j\|, \forall K_i \in \mathbb{K}_{\alpha}, \forall K_j \in (\mathbb{K}_{\alpha+\varsigma}^o)^c\}$, is guaranteed to be positive. Now, choose a step size $t$ so that ${t} \le \min \{2/(L + \phi L), {d}/\|\nabla J(K_i)\|\}$. This ensures that the segment $[K_i, K_i - {t} \nabla J(K_i)] \subseteq \mathbb{K}_{\alpha+\varsigma}$. According to the L-smoothness result~\eqref{eq.L-smooth}, one has 
\begin{equation}
\label{eq.eq50}
\begin{aligned}
J(K_i) \ge J(K_i\!-\!{t} \nabla J(K_i)) + {t}\!\left(\!1\!-\!\frac{(L\!+\!\phi L){t}}{2}\!\right)\!\|\nabla J(K_i)\|_F^2.
\end{aligned}
\end{equation}

Given the range of step size $t$, we confirm $J(K_i -{t} \nabla J(K_i)) \le J(K_i) < \alpha$, ensuring the iteration point $K_i -{t} \nabla J(K_i) \in \mathbb{K}_\alpha$ and the segment $[K_i, K_i -{t} \nabla J(K_i)] \subseteq \mathbb{K}_{\alpha}$.  By applying similar reasoning through \eqref{eq.L-smooth}, we can demonstrate that $[K,K-2t \nabla J(K)]\in\mathbb{K}_\alpha$ when $2t \le 2/((1+\phi)L)$. Furthermore, using induction, we generalize this result for  $T\in \mathbb{N}^+$ steps, establishing that $[K,K-Tt \nabla J(K)]\in\mathbb{K}_\alpha$ if $Tt \le 2/((1+\phi)L)$. 

Next, we consider a step size $\eta \le 1/L$. We can then choose a positive $t>0$ and a positive integer $T$ so that  $T{t} \in [\eta,\:2/(L + \phi L)]$. Then, the segment $[K_i, K_i - \eta \nabla J(K_i)] \subseteq \mathbb{K}_\alpha$. Following a parallel argument to that for \eqref{eq.eq50}, we get
\begin{equation}
\label{eq:monotony_gd}
\begin{aligned}
J(K_i - \eta \nabla J(K_i)) \le J(K_i)-\frac{\eta}{2}\|\nabla J(K_i)\|_F^2,
\end{aligned}
\end{equation}
where the inequality takes into account that $\eta \le 1/L$, with the boundary $1/L$ selected to achieve the fastest convergence rate. 

Given $J(K_0)=\alpha$, \eqref{eq:monotony_gd} indicates that $K_1 \in \mathbb{K}_\alpha$. Then, for any iteration $i$, we can use mathematical induction to arrive at
\begin{equation}
\label{eq:monotony}
J(K_{i+1})\le J(K_i)-\frac{\eta}{2}\|\nabla J(K_i)\|_F^2.
\end{equation}
Also, the line segment $[K_i, K_{i+1}] \subseteq \mathbb{K}_\alpha$. Summing up the above inequality yields
\begin{equation}
\frac{\eta}{2}\sum_{i=0}^{N}\|\nabla J(K_i)\|_F^2 \le J(K_0) - J(K_{N+1})\le J(K_0) - J(K^{\star}).
\end{equation}
It then follows that  $\lim_{i \rightarrow \infty} \|\nabla J(K_i)\|_F^2 = 0$ and 
\begin{equation}
\min_{0\le i\le N} \|\nabla J(K_i)\|_F^2 \le  \frac{2(J(K_0) - J(K^{\star}))}{\eta N} \le \frac{2\alpha}{\eta N},
\end{equation}
which shows the vanilla policy gradient can reach an $\epsilon$-stationary point within $\frac{2\alpha}{\eta\epsilon^2}$ iterations.

Furthermore, when $C\in\mathbb{C}$, combining \eqref{eq:monotony} and \eqref{eq.gradient_dominance} yields
\begin{equation}
J(K_{i+1})-J(K_{i}) \le -\frac{2\eta\mu^2\sigma_{\rm min}(C)^2\sigma_{\rm min}(R)}{\|\Sigma_{K^{\star}}\|}(J(K_i)-J_s^{\star}).
\end{equation}
This subsequently results in
\begin{equation}
J(K_{i})-J_s^{\star} \le \left(1-\frac{2\eta\mu^2\sigma_{\rm min}(C)^2\sigma_{\rm min}(R)}{\|\Sigma_{K^{\star}}\|}\right)^i(J(K_0)-J_s^{\star}).
\end{equation}
This proves the second claim of this theorem.
\end{proof}

Theorem \ref{theorem: first_order_convergence} establishes that, starting with an initial stabilizing control gain, the vanilla policy gradient method for the SOF problem ensures both the recursive stability of the control policy and a monotonically decreasing cost function. Moreover, the convergence rate to a stationary point is dimension-independent. To offer a unified view that encompasses both SOF and state-feedback LQR, our findings also reveal that the vanilla policy gradient method globally converges to a unique minimum at a linear rate when the state is fully observed. In this context, the convergence rate outlined in \eqref{eq.converge_rate} aligns with the conclusions in \cite[Theorem 7]{fazel2018global}. Notably, in contrast to \cite[Theorem 7]{fazel2018global}, we provide an explicit upper bound of the step size $\eta$ such that \eqref{eq:monotony} is satisfied. 

Although the convergence to stationary points of the vanilla policy gradient for SOF has been established, it is important to note that these stationary points can be local minima, saddle points, or even local maxima. Next, we will proceed to demonstrate that under mild assumptions, the vanilla method can indeed converge to a local minimum.
\begin{theorem}
\label{theorem: linear_local_minima}
\textnormal{
Suppose all the conditions in Lemma~\ref{lemma.L-smooth} and Lemma~\ref{lemma.continue_Hessian} hold. Assume that $\mathbb{K}_\beta \subset \mathbb{K}_\alpha$, where $\beta < \alpha$. So, the distance ${d} = \inf\{\|K_i - K_j\|, \forall K_i \in \mathbb{K}_{\beta}, \forall K_j \in (\mathbb{K}_{\alpha}^o)^c\}$ between two compact sets is positive. For the sublevel set $\mathbb{K}_{\beta}$, assume that there is a local minimum $K^\#$ with $l= \lambda_{\rm min}(\nabla^2 J(\mathrm{vec}(K^\#))>0$. Given that the initial gain $K_0$ is sufficiently close to this local minimum $K^\#$, denoted by an initial error  $r_0=\|K_0-K^\#\|_F < \bar{r} =2l/M$, and fulfilling the condition $\bar{r}r_0/(\bar{r}-r_0)\le {d}$, the vanilla policy gradient using a constant step size $\eta \le 1/L$ has an upper error bound:
\begin{equation}
\|K_i-K^{\#}\|_F\le \frac{\bar{r}r_0}{\bar{r}-r_0}\big(\frac{1}{1+\eta l}\big)^i.
\end{equation}
}
\end{theorem}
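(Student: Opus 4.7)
My plan is to combine the curvature information at $K^\#$ (where $\nabla^2 J(\mathrm{vec}(K^\#)) \succeq l I$) with the $M$-Lipschitz Hessian of Lemma~\ref{lemma.continue_Hessian} to linearize $\nabla J$ around $K^\#$ with an $O(r_i^2)$ remainder, obtaining a nonlinear contraction on $r_i := \|K_i - K^\#\|_F$; I would then solve that recursion in closed form and, throughout, use the distance condition $\bar{r}r_0/(\bar{r}-r_0) \le d$ to trap the iterates inside a ball around $K^\#$ that remains within $\mathbb{K}_\alpha$, legitimizing every invocation of Lemmas~\ref{lemma.L-smooth} and~\ref{lemma.continue_Hessian}.

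Concretely, I would set $e_i := K_i - K^\#$ and $H^\# := \nabla^2 J(\mathrm{vec}(K^\#))$. The fundamental theorem of calculus applied to $\nabla J$ along $[K^\#, K_i]$, combined with $\nabla J(K^\#) = 0$ and Lemma~\ref{lemma.continue_Hessian}, gives
$$\mathrm{vec}(\nabla J(K_i)) = H^\# \mathrm{vec}(e_i) + \mathrm{vec}(R_i), \qquad \|R_i\|_F \le \tfrac{M}{2} r_i^2 .$$
Feeding this into the update and taking Frobenius norms yields
$$r_{i+1} \le \|I - \eta H^\#\|\, r_i + \tfrac{\eta M}{2} r_i^2 \le r_i\bigl(1 - \eta l(1 - r_i/\bar{r})\bigr),$$
where the operator-norm estimate uses $\eta \le 1/L$ together with $\|H^\#\| \le L$ (from Lemma~\ref{lemma.L-smooth}) to pin the spectrum of $I - \eta H^\#$ inside $[0, 1-\eta l]$, and $\bar{r} = 2l/M$ has been substituted. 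In particular $r_{i+1} < r_i$ whenever $r_i < \bar{r}$, which will be the seed of the induction that keeps iterates inside the working set.

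To convert this nonlinear contraction into the advertised geometric rate, I would rewrite it as $1/r_{i+1} \ge (1/r_i)/(1 - \eta l(1 - r_i/\bar{r}))$ and apply the elementary inequality $1/(1-x) \ge 1 + x$ for $x \in (0,1)$. A short algebraic rearrangement then produces the clean reciprocal recursion
$$\frac{1}{r_{i+1}} - \frac{1}{\bar{r}} \ge (1 + \eta l)\left(\frac{1}{r_i} - \frac{1}{\bar{r}}\right),$$
so induction gives $1/r_i - 1/\bar{r} \ge (1 + \eta l)^i(\bar{r} - r_0)/(r_0 \bar{r})$, and inverting delivers $r_i \le \bar{r}r_0/((\bar{r}-r_0)(1+\eta l)^i)$, exactly as claimed.

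The step I expect to be the main obstacle is the customary chicken-and-egg issue of a local convergence proof: the contraction above is valid only while $K_i$ and the segment $[K_i,K_{i+1}]$ lie inside $\mathbb{K}_\alpha$, yet that containment is itself a consequence of the contraction. I would resolve this by a single simultaneous induction. The inductive hypothesis $r_j \le \bar{r}r_0/(\bar{r}-r_0) \le d$ for $j \le i$ places $K_i$ inside the closed ball of radius $d$ around $K^\# \in \mathbb{K}_\beta$, which by the definition of $d$ is contained in $\mathbb{K}_\alpha^o$; an argument parallel to the one in the proof of Theorem~\ref{theorem: first_order_convergence} shows that the line segment $[K_i,K_{i+1}]$ also lies in $\mathbb{K}_\alpha$, so Lemmas~\ref{lemma.L-smooth} and~\ref{lemma.continue_Hessian} legitimately apply and the contraction step advances the bound to $r_{i+1} \le \bar{r}r_0/((\bar{r}-r_0)(1+\eta l)^{i+1})$, closing the induction.
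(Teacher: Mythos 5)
Your proposal is correct and follows essentially the same route as the paper: the paper's proof confines the iterates to the convex ball $\mathbb{K}^{\#}=\{K:\|K-K^{\#}\|_F\le \bar{r}r_0/(\bar{r}-r_0)\le d\}\subset\mathbb{K}_{\alpha}$ so that Lemma~\ref{lemma.L-smooth} and Lemma~\ref{lemma.continue_Hessian} apply, and then invokes Nesterov's Theorem~1.2.4 as a black box for the error bound. The only difference is that you unpack that citation into a full derivation --- the linearization of $\nabla J$ about $K^{\#}$ with remainder $\tfrac{M}{2}r_i^2$, the recursion $r_{i+1}\le r_i\bigl(1-\eta l(1-r_i/\bar{r})\bigr)$, and the reciprocal recursion yielding the $(1+\eta l)^{-i}$ rate --- all of which checks out.
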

\begin{proof}
Denote the set of gains around the minimum $K^{\#}$ as $\mathbb{K}^{\#}:=\{K\in \mathbb{R}^{m\times d}:\|K-K^{\#}\|_F\le \bar{r}r_0/(\bar{r}-r_0) \le {d}\}$. Then, we have $\mathbb{K}^{\#} \subset \mathbb{K}_{\alpha}$. For any scalar $\delta\in [0,1]$ and any control gains $K,K'\in \mathbb{K}^{\#}$, it follows that $(1 - \delta) K + \delta K'\in \mathbb{K}^{\#} \subset \mathbb{K}_{\alpha}$. Therefore, the conclusions of Lemma~\ref{lemma.L-smooth} and Lemma~\ref{lemma.continue_Hessian} can be applied directly. Finally, the upper error bound of iterative gain can be immediately derived from~\cite[Theorem 1.2.4]{nesterov1998introductory}. 
\end{proof}

When initialized near local minima, Theorem~\ref{theorem: linear_local_minima} assures that vanilla policy gradient will exhibit linear convergence concerning the control gain. Although the aforementioned theoretical analysis relies on full awareness of model parameters and cost function details, it is worth noting the applicability of this analysis in model-free environments. In such settings, data-driven approaches like zeroth-order optimization techniques can be employed to offer an unbiased estimation of $\nabla J(K)$~\cite{conn2009introduction,nesterov2017random, fazel2018global}. Hence, our findings suggest that data-driven methods can also effectively handle discrete-time SOF problems, provided the gradient is approximated with reasonable precision.

\subsection{Natural Policy Gradient}
Besides the vanilla policy gradient method, the natural policy gradient method is also widely used in RL research \cite{fazel2018global,jansch2020Mjump,kakade2001natural}.  The natural gradient method iterates as follows 
\begin{equation}
\label{eq.natural_pg}
K'=K-\eta \nabla^{\rm NA} J(K) ,
\end{equation}
where 
\begin{equation}
\nonumber
\nabla^{\rm NA} J(K)=\nabla J(K) \mathcal{L}_K^{-1}
\end{equation}
is the natural policy gradient. More explanations for this update rule can be found in \cite{fazel2018global}.

\begin{theorem}
\label{theorem: natural_pg}
\textnormal{
Suppose $J(K_0)=\alpha$ and $X_0 \succ 0$. The cost $J(K_i)$ of natural gradient descent \eqref{eq.natural_pg} is monotonically diminishing (which indicates $K_{i}\in \mathbb{K}_\alpha \subseteq \mathbb{K}$, i.e., $K_i$ is stabilizing), and an $\epsilon$-stationary point, i.e., $ \|\nabla^{\rm NA} J(K_i) \|_F \le \epsilon$, can be reached in
\begin{equation}
\label{eq.nearly_convergence_rate_npg}
\frac{2\alpha}{\eta\mu\sigma_{\rm min}(C)^2 \epsilon^2}
\end{equation}
iterations, where the step size $\eta \le \mu \sigma_{\rm min}(C)^2/L$. 
If $C$ is full rank, an $\epsilon_J$-optimal control gain $K_N$, satisfying $J(K_N)-J_s^{\star} \le \epsilon_J$, is achieved when the iteration step 
\begin{equation}
\label{eq.convergence_rate_npg}
N\ge \frac{\|\Sigma_{K^{\star}}\|}{2\eta\mu\sigma_{\rm min}(R)}\log \left(\frac{J(K_0)-J_s^{\star}}{\epsilon_J}\right).
\end{equation}}
\end{theorem}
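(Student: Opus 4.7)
The plan is to mirror the proof structure of Theorem \ref{theorem: first_order_convergence} but adapted to the preconditioned direction $\nabla^{\rm NA}J(K)=\nabla J(K)\mathcal{L}_K^{-1}$. The central geometric fact I would use throughout is the uniform lower bound $\mathcal{L}_K = C\Sigma_K C^\top \succeq \mu\sigma_{\rm min}(C)^2 I_d$, which follows from $\Sigma_K\succeq X_0\succeq \mu I_n$ and the full-row-rank of $C$ assumed in Assumption \ref{assumption.control_observe}; equivalently, $\|\mathcal{L}_K^{-1}\|\le 1/(\mu\sigma_{\rm min}(C)^2)$ on all of $\mathbb{K}$. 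This converts the preconditioning by $\mathcal{L}_K^{-1}$ into clean constants in every estimate that follows.

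For the descent step, I would invoke Lemma \ref{lemma.L-smooth} along $[K_i, K_{i+1}]$ to get
\begin{equation*}
J(K_{i+1}) \le J(K_i) - \eta\,{\rm Tr}\bigl(\nabla J(K_i)^\top \nabla J(K_i)\mathcal{L}_{K_i}^{-1}\bigr) + \tfrac{L\eta^2}{2}\|\nabla J(K_i)\mathcal{L}_{K_i}^{-1}\|_F^2.
\end{equation*}
The quadratic term obeys $\|\nabla J(K_i)\mathcal{L}_{K_i}^{-1}\|_F^2 \le \|\mathcal{L}_{K_i}^{-1}\|\cdot{\rm Tr}(\nabla J(K_i)^\top \nabla J(K_i)\mathcal{L}_{K_i}^{-1}) \le \tfrac{1}{\mu\sigma_{\rm min}(C)^2}{\rm Tr}(\nabla J(K_i)^\top \nabla J(K_i)\mathcal{L}_{K_i}^{-1})$, so choosing $\eta\le \mu\sigma_{\rm min}(C)^2/L$ absorbs it and leaves $J(K_{i+1})-J(K_i)\le -\tfrac{\eta}{2}{\rm Tr}(\nabla J(K_i)^\top \nabla J(K_i)\mathcal{L}_{K_i}^{-1})$. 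Rewriting this trace as ${\rm Tr}(\mathcal{L}_{K_i}(\nabla^{\rm NA}J(K_i))^\top \nabla^{\rm NA}J(K_i)) \ge \mu\sigma_{\rm min}(C)^2\|\nabla^{\rm NA}J(K_i)\|_F^2$ yields the clean monotone descent
\begin{equation*}
J(K_{i+1}) \le J(K_i) - \tfrac{\eta\mu\sigma_{\rm min}(C)^2}{2}\|\nabla^{\rm NA}J(K_i)\|_F^2.
\end{equation*}
Starting from $J(K_0)=\alpha$, this gives $K_i\in\mathbb{K}_\alpha$ by induction, and telescoping plus the min-argument recovers the $2\alpha/(\eta\mu\sigma_{\rm min}(C)^2\epsilon^2)$ complexity for reaching an $\epsilon$-stationary point.

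For the linear rate when $C\in\mathbb{C}$, the key algebraic simplification is that a square invertible $C$ yields $\Sigma_K C^\top(C\Sigma_K C^\top)^{-1}C\Sigma_K = \Sigma_K$. Substituting into the trace already bounded above gives ${\rm Tr}(\nabla J(K_i)^\top\nabla J(K_i)\mathcal{L}_{K_i}^{-1}) = 4\,{\rm Tr}(E_{K_i}\Sigma_{K_i} E_{K_i}^\top) \ge 4\mu\,{\rm Tr}(E_{K_i}^\top E_{K_i})$, so the descent sharpens to $J(K_{i+1})-J(K_i) \le -2\eta\mu\,{\rm Tr}(E_{K_i}^\top E_{K_i})$. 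Combining with the upper bound $J(K) - J_s^\star \le \|\Sigma_{K^\star}\|\,{\rm Tr}(E_K^\top E_K)/\sigma_{\rm min}(R)$, which follows from \eqref{eq.gradient_dominance_1} via $(R+B^\top P_K B)^{-1}\preceq\sigma_{\rm min}(R)^{-1}I$ and the fact that $J(K^\star)=J_s^\star$ for full-rank $C$, produces the per-step contraction
\begin{equation*}
J(K_{i+1}) - J_s^\star \le \Bigl(1-\tfrac{2\eta\mu\sigma_{\rm min}(R)}{\|\Sigma_{K^\star}\|}\Bigr)\bigl(J(K_i) - J_s^\star\bigr),
\end{equation*}
and telescoping combined with the elementary estimate $-\log(1-x)\ge x$ delivers the iteration count \eqref{eq.convergence_rate_npg}.

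The main obstacle, just as in Theorem \ref{theorem: first_order_convergence}, is rigorously justifying that each segment $[K_i,K_{i+1}]$ lies in the sublevel set $\mathbb{K}_\alpha$ so that Lemma \ref{lemma.L-smooth} is actually applicable along it. I would handle this by the same compactness-distance induction used there: Lemmas \ref{lemma.coercivity} and \ref{lemma.compact} give a positive separation between $\mathbb{K}_\alpha$ and $(\mathbb{K}_{\alpha+\varsigma}^o)^c$, so a sufficiently small intermediate step $t$ keeps $[K,K-t\nabla^{\rm NA}J(K)]$ inside $\mathbb{K}_{\alpha+\varsigma}$ and the smoothness inequality then pulls the endpoint back into $\mathbb{K}_\alpha$; stitching small $t$-steps together up to total length $\eta$ completes the induction. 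The only new ingredient relative to the vanilla case is the uniform update-direction bound $\|\nabla^{\rm NA}J(K)\|_F \le \|\nabla J(K)\|_F/(\mu\sigma_{\rm min}(C)^2)$, which together with continuity of $\nabla J$ on the compact set $\mathbb{K}_\alpha$ makes the stitching argument carry over verbatim.
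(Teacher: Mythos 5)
Your proposal is correct and follows essentially the same route as the paper's proof in Appendix F: the segment-containment argument is inherited from Theorem \ref{theorem: first_order_convergence}, the descent inequality is obtained from Lemma \ref{lemma.L-smooth} with the preconditioned direction and the bound $\sigma_{\rm min}(\mathcal{L}_K)\ge\mu\sigma_{\rm min}(C)^2$, and the linear rate for $C\in\mathbb{C}$ comes from reducing the descent term to $4\,{\rm Tr}(\Sigma_{K}E_{K}^\top E_{K})$ and invoking \eqref{eq.gradient_dominance_1}. The only differences are cosmetic (you absorb the quadratic term via $\|\nabla^{\rm NA}J\|_F^2\le\|\mathcal{L}_K^{-1}\|\,{\rm Tr}(\nabla J^\top\nabla J\mathcal{L}_K^{-1})$ and use the projection identity $\Sigma_K C^\top\mathcal{L}_K^{-1}C\Sigma_K=\Sigma_K$, where the paper writes $\nabla^{\rm NA}J=2E_KC^{-1}$ directly), and both yield identical constants.
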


The proof of Theorem \ref{theorem: natural_pg} is provided in Appendix \ref{appendix:natural_pg}, which is similar to that of Theorem \ref{theorem: first_order_convergence}. Theorem \ref{theorem: natural_pg} illustrates that the natural policy gradient technique also converges to a stationary point in SOF problems at a nearly dimension-free rate. The term ``nearly dimension-free rate" suggests that the convergence does not explicitly depend on the system dimension. Besides, the explicit form of the convergence rate  \eqref{eq.convergence_rate_npg} for the fully observed case ($C\in\mathbb{C}$) is also provided for completeness, which is consistent with the result given in \cite[Theorem 7]{fazel2018global}. Similar to the vanilla policy gradient method, the natural policy gradient method can also be implemented in a model-free manner. Since $\mathcal{L}_K=\mathbb{E}_{x_0\sim \mathcal{D}}\sum_{t=0}^{\infty}y_t y_t^\top$, one can just estimate $\nabla^{\rm NA} J(K)$ from  cost and output information trajectories. The numerical evidence given in existing studies \cite{fazel2018global,jansch2020Mjump} shows that the natural policy gradient method usually leads to a faster convergence speed than the vanilla policy gradient method. 

\subsection{Gauss-Newton Policy Gradient}
Next, we consider the Gauss-Newton policy gradient method, which iterates as follows 
\begin{equation}
\label{eq.gauss_newton_pg}
K'=K-\eta \nabla^{GN} J(K),
\end{equation}
where 
\begin{equation}
\nonumber
\nabla^{GN} J(K)=(R+B^\top P_K B)^{-1}\nabla J(K) \mathcal{L}_K^{-1}
\end{equation}
is the Gauss-Newton policy gradient. More explanations for this update rule can be found in \cite{fazel2018global}.

\begin{theorem}
\label{theorem: GN_convergence}
\textnormal{Suppose $J(K_0)=\alpha$ and $X_0 \succ 0$. If we run Gauss-Newton natural gradient descent \eqref{eq.gauss_newton_pg} with any step size $\eta \le \mu\sigma_{\rm min}(R)\sigma_{\rm min}(C)^2/L$, $J(K_i)$ is monotonically diminishing (which indicates $K_{i}\in \mathbb{K}_\alpha \subseteq \mathbb{K}$, i.e., $K_i$ is stabilizing), and an $\epsilon$-stationary point, i.e., $ \|\nabla^{\rm GN} J(K_i) \|_F \le \epsilon$, will be reached in 
\begin{equation}
\label{eq.nearly_convergence_rate_gpg}
\frac{2\alpha}{\eta \mu\sigma_{\rm min}(R)\sigma_{\rm min}(C)^2\epsilon^2}
\end{equation}
iterations. 
If $C\in \mathbb{C}$, an $\epsilon_J$-optimal control gain $K_N$, satisfying $J(K_N)-J_s^{\star} \le \epsilon_J$, is achieved when the iteration step 
\begin{equation}
\label{eq.convergence_rate_gpg}
N\ge \frac{\|\Sigma_{K^{\star}}\|}{2\eta\mu}\log \left(\frac{J(K_0)-J_s^{\star}}{\epsilon_J}\right).
\end{equation}}
\end{theorem}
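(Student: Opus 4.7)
The plan is to mirror the three-stage scaffolding already used in the proofs of Theorems~\ref{theorem: first_order_convergence} and~\ref{theorem: natural_pg}: first keep the iterates inside a common compact sublevel set, then derive a descent lemma tailored to the Gauss-Newton direction, and finally convert the per-step decrease into the asserted stationary-point and linear-convergence rates.

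For the stabilization step, Lemma~\ref{lemma.compact} provides compactness of $\mathbb{K}_\alpha$, and a slight enlargement $\mathbb{K}_{\alpha+\varsigma}$ inherits the uniform Hessian bound of Lemma~\ref{lemma.L-smooth}. The same continuity-and-induction device used in the proof of Theorem~\ref{theorem: first_order_convergence}---first controlling a tiny auxiliary step $t$ and then promoting it to the prescribed $\eta$ by induction---shows $[K_i, K_{i+1}] \subseteq \mathbb{K}_\alpha$ once monotone cost decrease is in hand. Abbreviating $G = \nabla J(K_i)$, $F_i = (R + B^\top P_{K_i} B)^{-1}$, and $\Lambda_i = \mathcal{L}_{K_i}^{-1}$, the smoothness inequality~\eqref{eq.L-smooth} applied to~\eqref{eq.gauss_newton_pg} yields
\begin{equation*}
J(K_{i+1}) \le J(K_i) - \eta\,{\rm Tr}(G^\top F_i G \Lambda_i) + \tfrac{L\eta^2}{2}\|F_i G \Lambda_i\|_F^2.
\end{equation*}
The key algebraic step is that, with $V := F_i^{1/2} G \Lambda_i^{1/2}$, one has ${\rm Tr}(G^\top F_i G \Lambda_i) = \|V\|_F^2$ and $\|F_i G \Lambda_i\|_F^2 = {\rm Tr}(F_i V \Lambda_i V^\top) \le \|F_i\|\,\|\Lambda_i\|\,\|V\|_F^2$. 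Since $F_i \preceq I/\sigma_{\rm min}(R)$ and $\mathcal{L}_{K_i} = C\Sigma_{K_i} C^\top \succeq \mu\sigma_{\rm min}(C)^2 I$, the advertised step size $\eta \le \mu\sigma_{\rm min}(R)\sigma_{\rm min}(C)^2/L$ absorbs the quadratic term into half of the linear one, delivering $J(K_{i+1}) \le J(K_i) - (\eta/2){\rm Tr}(G^\top F_i G \Lambda_i)$ and closing the stability induction.

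To extract the stationary-point rate~\eqref{eq.nearly_convergence_rate_gpg}, I would reverse the same Loewner inequalities: with $Y = \nabla^{\rm GN} J(K_i) = F_i G \Lambda_i$, the identity ${\rm Tr}(G^\top F_i G \Lambda_i) = {\rm Tr}(F_i^{-1} Y \Lambda_i^{-1} Y^\top) \ge \mu\sigma_{\rm min}(R)\sigma_{\rm min}(C)^2 \|Y\|_F^2$ is immediate from $F_i^{-1} \succeq \sigma_{\rm min}(R) I$ and $\Lambda_i^{-1} = \mathcal{L}_{K_i} \succeq \mu\sigma_{\rm min}(C)^2 I$; summing the one-step decrease over $i=0,\dots,N-1$ together with $J(K_0) - J^\star \le \alpha$ gives the claimed count. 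For the full-rank branch $C \in \mathbb{C}$, the invertibility of $C$ produces the clean identity $C^\top \mathcal{L}_K^{-1} C = \Sigma_K^{-1}$; plugging $\nabla J(K) = 2 E_K \Sigma_K C^\top$ into ${\rm Tr}(G^\top F_i G \Lambda_i)$ collapses it to $4\,{\rm Tr}(E_K^\top F_i E_K \Sigma_K) \ge 4\mu\,{\rm Tr}(E_K^\top F_i E_K)$, and combining with the state-feedback gradient-domination bound $J(K) - J_s^\star \le \|\Sigma_{K^\star}\|\,{\rm Tr}(E_K^\top F_i E_K)$ (the invertible-$C$ specialization of Lemma~\ref{lemma.gradient_dominance}) yields the contraction $J(K_{i+1}) - J_s^\star \le \bigl(1 - 2\eta\mu/\|\Sigma_{K^\star}\|\bigr)(J(K_i) - J_s^\star)$, which unrolls to~\eqref{eq.convergence_rate_gpg} via $\log(1-x) \le -x$.

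The main obstacle is calibrating the two Loewner bounds on $V$ so that one single algebraic identity simultaneously supplies the correct step-size ceiling (via its upper bound) and the dimension-free rate constant $\mu\sigma_{\rm min}(R)\sigma_{\rm min}(C)^2$ (via its lower bound), with neither loosened enough to spoil the sharpness of the theorem's scaling. The linear-rate branch additionally hinges on the collapse $C^\top \mathcal{L}_K^{-1} C = \Sigma_K^{-1}$, which requires $C$ to be square and invertible; without it, only the weaker bound~\eqref{eq.gradient_dominance_1} is available, which is why the logarithmic-time claim is restricted to $C \in \mathbb{C}$.
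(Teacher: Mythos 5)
Your proposal is correct and follows essentially the same route as the paper's proof in Appendix~G: the same sublevel-set/induction argument for stability, the same descent inequality from \eqref{eq.L-smooth} applied to the Gauss-Newton step, and the same combination of the gradient formula \eqref{eq.gradient} with the gradient-domination bound \eqref{eq.gradient_dominance_1} for the full-rank branch. Your intermediate factorization $V = F_i^{1/2} G \Lambda_i^{1/2}$ is a slightly more explicit way of organizing the two Loewner bounds that the paper applies directly, but it lands on identical constants throughout.
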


See Appendix \ref{appendix:proof_GN} for details on deriving Theorem \ref{theorem: GN_convergence}. Theorem~\ref{theorem: GN_convergence} establishes the result of nearly dimension-free convergence to stationary points of the Gauss-Newton method. The explicit form of the convergence rate \eqref{eq.convergence_rate_gpg} for the fully observed case ($C\in\mathbb{C}$) is consistent with the result given in \cite[Theorem 7]{fazel2018global}. Different from the vanilla policy gradient and the natural policy gradient methods, the Gauss-Newton method is not suitable for model-free settings since it requires the knowledge of matrices $B$ and $P_K$. 

\begin{remark}
\textnormal{The Gauss-Newton method and the natural policy gradient method generally converge faster than the vanilla policy gradient method in terms of iteration number \cite{fazel2018global,jansch2020Mjump}. As a trade-off, these two methods need more information to calculate the update gradients, taking up more computational resources. Notably, the Gauss-Newton method is ill-suited for model-free settings, as its gradient estimation relies on matrices $B$ and $P_K$.} 
\end{remark}

\subsection{Impact of Initial Distribution}

Up to this point, we have demonstrated that all three policy gradient methods are capable of converging to stationary points at a nearly dimension-free rate. When implementing these policy gradient algorithms in practice, it is crucial to recognize that these stationary points are not fixed; they are influenced by the initial state distribution. 
\begin{proposition}
\label{Proposition:distribution}
\textnormal{Let $K^{\ddagger}$ represent the stationary point of the SOF problem. When $C$ lacks full rank and $K^{\ddagger}C \neq K^{\star}_s$,  $K^{\ddagger}$ is influenced by the initial state distribution.  Here, $K^{\star}_s$ represents the optimal solution for state feedback LQR.} 
\end{proposition}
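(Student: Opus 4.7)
My plan is to prove Proposition~\ref{Proposition:distribution} by contrapositive: assume that a candidate stationary point $K^{\ddagger}$ remains a stationary point for every choice of initial covariance $X_0 \succ 0$, and deduce that $K^{\ddagger} C = K^{\star}_s$. The starting point is Lemma~\ref{lemma:gradient}, which tells us that the stationary condition $\nabla J(K^{\ddagger})=0$ is equivalent to
\begin{equation}
E_{K^{\ddagger}} \, \Sigma_{K^{\ddagger}} \, C^\top = 0.
\end{equation}
A crucial structural observation is that both $E_{K^{\ddagger}}=(R+B^\top P_{K^{\ddagger}} B)K^{\ddagger}C-B^\top P_{K^{\ddagger}} A$ and $P_{K^{\ddagger}}$ depend only on $K^{\ddagger}$ and $(A,B,Q,R)$, whereas $\Sigma_{K^{\ddagger}}$ depends linearly on $X_0$ through the Lyapunov equation~\eqref{eq.lyapunov_equation_sigma}.

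Next I would exploit that, since $K^{\ddagger}\in\mathbb{K}$, the closed-loop matrix $\mathcal{A}_{K^{\ddagger}}$ is Schur stable, so the map $X_0 \mapsto \Sigma_{K^{\ddagger}}(X_0)=\sum_{j=0}^{\infty}\mathcal{A}_{K^{\ddagger}}^{j}X_0(\mathcal{A}_{K^{\ddagger}}^\top)^{j}$ is a linear bijection on the space of symmetric $n\times n$ matrices, with inverse $\Sigma \mapsto \Sigma - \mathcal{A}_{K^{\ddagger}} \Sigma \mathcal{A}_{K^{\ddagger}}^\top$. Consequently, as $X_0$ ranges over the open cone $\mathbb{S}^n_{++}$, $\Sigma_{K^{\ddagger}}$ sweeps out an open subset of $\mathbb{S}^n_{++}$. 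Since the map $\Sigma \mapsto E_{K^{\ddagger}} \Sigma C^\top$ is linear and vanishes on this open set, it must vanish on the whole space of symmetric matrices, i.e., $E_{K^{\ddagger}} \Sigma C^\top = 0$ for every symmetric $\Sigma\in\mathbb{R}^{n\times n}$.

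The core algebraic step is to conclude that $E_{K^{\ddagger}}=0$. Plugging $\Sigma = vv^\top$ for arbitrary $v\in\mathbb{R}^n$ yields the rank-one identity $(E_{K^{\ddagger}} v)(Cv)^\top = 0$, so either $E_{K^{\ddagger}} v = 0$ or $C v = 0$. By Assumption~\ref{assumption.control_observe} the rows of $C$ are independent, so $C\neq 0$; pick $v^{\ast}$ with $Cv^{\ast}\neq 0$, giving $E_{K^{\ddagger}} v^{\ast} = 0$. For any $w\in \ker C$, the perturbation $w + v^{\ast}$ still satisfies $C(w+v^{\ast})\neq 0$, so $E_{K^{\ddagger}}(w+v^{\ast})=0$ and hence $E_{K^{\ddagger}} w = 0$. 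Linearity then gives $E_{K^{\ddagger}}=0$ on all of $\mathbb{R}^n$. Substituting this back into the Lyapunov equation~\eqref{eq.lyapunov_equation} for $P_{K^{\ddagger}}$ with $\mathcal{A}_{K^{\ddagger}}=A-BK^{\ddagger}C$ collapses the pair of conditions into the discrete-time algebraic Riccati equation associated with state-feedback LQR; by uniqueness of its stabilizing solution, $K^{\ddagger}C = K^{\star}_s$, contradicting the hypothesis. Therefore $K^{\ddagger}$ cannot remain stationary under every $X_0$, which is precisely the asserted dependence on the initial distribution.

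The step I expect to be the main obstacle is the transition from ``$E_{K^{\ddagger}}\Sigma_{K^{\ddagger}}(X_0) C^\top = 0$ for all $X_0\succ 0$'' to ``$E_{K^{\ddagger}}\Sigma C^\top = 0$ for all symmetric $\Sigma$'': it is essential that the discrete-time Lyapunov operator at a stabilizing $K^{\ddagger}$ is a linear bijection carrying the open cone $\mathbb{S}^n_{++}$ to an open subset of symmetric matrices, so that a linear map vanishing on the image must vanish identically. Once this topological/linear-algebraic point is secured, the remainder of the argument reduces to elementary rank-one manipulations and the well-known uniqueness of the stabilizing ARE solution.
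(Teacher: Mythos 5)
Your proposal is correct, and it is in fact a more rigorous argument than the one the paper gives. The paper's own proof proceeds along the same conceptual axis---it observes that $\nabla J(K^{\ddagger}) = 2E_{K^{\ddagger}}\Sigma_{K^{\ddagger}}C^\top$, that $\Sigma_{K^{\ddagger}}$ changes with $X_0$, and that when $\|E_{K^{\ddagger}}\|_F \neq 0$ one ``cannot guarantee'' the gradient remains zero under a different distribution---but it stops at that plausibility statement and never actually exhibits or proves the existence of an $X_0'$ that destroys stationarity. You close exactly this gap by arguing the contrapositive: if $E_{K^{\ddagger}}\Sigma_{K^{\ddagger}}(X_0)C^\top = 0$ for \emph{every} $X_0 \succ 0$, then since the discrete Lyapunov operator at a Schur-stable $\mathcal{A}_{K^{\ddagger}}$ is a linear bijection of $\mathbb{S}^n$ carrying the open cone $\mathbb{S}^n_{++}$ (which spans $\mathbb{S}^n$) onto an open set, linearity forces $E_{K^{\ddagger}}\Sigma C^\top = 0$ for all symmetric $\Sigma$; the rank-one specialization $\Sigma = vv^\top$ and your kernel-perturbation step then yield $E_{K^{\ddagger}} = 0$, and the paper's own equivalence $\|E_K\|_F = 0 \iff KC = K_s^{\star}$ (from the uniqueness of the stabilizing ARE solution) gives the contradiction with $K^{\ddagger}C \neq K_s^{\star}$. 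All three nontrivial steps---bijectivity of the Lyapunov map, the dichotomy $E_{K^{\ddagger}}v = 0$ or $Cv = 0$, and the reduction of $E_K = 0$ to the ARE---check out. What your route buys is a genuine existence statement (some admissible $X_0'$ renders $K^{\ddagger}$ non-stationary), which is the precise content the proposition needs; the only cosmetic remark is that the density of $\{v : Cv \neq 0\}$ plus continuity of $v \mapsto E_{K^{\ddagger}}v$ would let you skip the explicit perturbation by $v^{\ast}$, though your elementary version is equally valid.
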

\begin{proof}
For the case where $C \in \mathbb{C}$,  Lemma~\ref{lemma:gradient} and \eqref{eq.gradient_dominance} in Lemma~\ref{lemma.gradient_dominance} establish that 
\begin{equation}
\label{eq.E_K=0}
\|E_{K^{\star}_s}\|_F=0.
\end{equation}

From the definition of the SOF controller \eqref{eq.sop}, one has $$u_t=-KCx_t,$$
where $KC$ effectively serves as a state-feedback gain.

Lemma~\ref{lemma.gradient_dominance} asserts that  $K_s^{\star}$ is unique, which means $$\|E_{K}\|_F=0 \;\iff\; KC=K_s^{\star}.$$
However, when $C$ lacks full rank, it is possible that no gain $K$ will satisfy $KC=K_s^{\star}$.

If $K\in \mathbb{K}$ and $X_0\succ 0$, theory \cite[Lemma~2]{lee2018primal} suggests that the Lyapunov equation~\eqref{eq.lyapunov_equation_sigma} admits a unique positive definite solution $\Sigma_{K}$. For different initial distributions $\mathcal{D}$ and $\mathcal{D}'$, \eqref{eq.lyapunov_equation_sigma} indicates that 
$$\Sigma_{K^{\ddagger}}\neq \Sigma_{K^{\ddagger}}' \;{\rm if}\; X_0 \neq X_0' =\mathbb{E}_{x_0\sim \mathcal{D}'}x_0x_0^\top.$$

According to \eqref{eq.gradient}, a stationary point $K^{\ddagger}$ meets the condition 
$$\|\nabla J(K^{\ddagger})\|_F = 2\|E_{K^{\ddagger}}\Sigma_{K^{\ddagger}}C^\top\|_F=0.$$ 
However, if $\|E_{K^{\ddagger}}\|_F\neq 0$ (that is, $K^{\ddagger}C\neq K_s^\star$), we cannot guarantee that $\|\nabla J(K^{\ddagger})\|_F=\|E_{K^{\dagger}}\Sigma_{K^{\ddagger}}'C^\top\|_F$ will be zero for all possible distribution $\mathcal{D}'$, due to its influence on $\Sigma_{K^{\ddagger}}'$. 

Nevertheless,  when $C \notin \mathbb{C}$, the stationary point $K^{\ddagger}$ in SOF is influenced by the initial state distribution $\mathcal{D}$. In other words, different initial distributions could yield distinct stationary points unless $K^{\ddagger}C= K_s^\star$. 
\end{proof}

The foregoing theoretical discussion suggests that to achieve an effective SOF policy, the initial state distribution should be carefully selected to match the practical application conditions.

\section{Numerical Results}
\label{sec:simulation}
In this section, we will present some numerical simulations to verify the performance of the above gradient descent methods in optimizing SOF problems. Since the vanilla policy gradient method and the natural policy gradient method can be implemented in a model-free manner, their model-free versions are also developed and tested.

\subsection{Model-free Optimization}
In the model-free setting, the model parameters, $A$, $B$, $C$, $Q$, $R$, are unknown. In keeping with other work in the literature \cite{fazel2018global}, we assume the algorithm has access to the observation $y_t$ and running cost $c_t$ at each time step, where $c_t:=x_t^\top Qx_t+u_t^\top Ru_t$. Using the zeroth-order optimization approach \cite{conn2009introduction,nesterov2017random,fazel2018global}, Algorithm \ref{alg:model-free} provides a data-driven procedure to estimate the gradients of both vanilla and natural policy gradient methods. 

\begin{algorithm}[!htb]
\caption{Model-Free Vanilla and Natural Policy Gradient}
\label{alg:model-free}
\begin{algorithmic}
   \STATE Input: stabilizing policy gain $K_0$, number of trajectories $z$, roll-out length $l$, perturbation amplitude $r$, step size $\eta$
   \REPEAT
   \STATE \textbf{Gradient Estimation:}
   \FOR{$i=1,\cdots, z$}
   \STATE Sample $x_0$ from $\mathcal{D}$
   \STATE Simulate $K_{j}$ for $l$ steps starting from $x_0$ and observe $y_0, \cdots, y_{l-1}$ and $c_0, \cdots, c_{l-1}$. 
   \STATE Draw $U_i$ uniformly at random over matrices such that $\|U_i\|_F=1$, and generate a policy $K_{j,U_i}=K_j+rU_i$.
   \STATE Simulate $K_{j,U_i}$ for $l$ steps starting from $x_0$ and observe $c_0', \cdots, c_{l-1}'$.
   \STATE Calculate empirical estimates:
   \begin{equation}
   \nonumber
   \widehat{J_{K_{j}}^i}=\sum_{t=0}^{l-1} c_t,\; \widehat{\mathcal{L}_{K_{j}}^i}=\sum_{t=0}^{l-1} y_ty_t^\top,\; \widehat{J_{K_{j,U_i}}}=\sum_{t=0}^{l-1} c_t'.
   \end{equation}
   \ENDFOR
\STATE Return estimates:
   \begin{equation}
   \nonumber
    \widehat{\nabla J(K_j)} = \frac{1}{z}\sum_{i=1}^z\frac{\widehat{J_{K_{j,U_i}}}-\widehat{J_{K_{j}}^i}}{r}U_i,\; \widehat{\mathcal{L}_{K_j}}=\frac{1}{z}\sum_{i=1}^z\widehat{\mathcal{L}_{K_{j}}^i}.
   \end{equation}
    \STATE \textbf{Policy Update:}
    \STATE Vanilla policy gradient $K_{j+1}=K_j-\eta \widehat{\nabla J(K_j)}.$
    \STATE Natural policy gradient $K_{j+1}=K_j-\eta \widehat{\nabla J(K_j)}\widehat{\mathcal{L}_{K_j}}^{-1}.$
    \STATE $j=j+1$.
\UNTIL $\|\widehat{\nabla J(K_{j-1})}\|_F\le \epsilon$
\end{algorithmic}
\end{algorithm}

\vspace{-2em}

\subsection{Example I: Open-loop Unstable Linear System}
\label{sec.2-d example}

Consider an internally unstable linear system 
\begin{equation}
\label{eq.unstable_system}
    A = \begin{bmatrix}
    1.1 & 0.1 \\
    0 & 1.1
    \end{bmatrix}, 
    B = \begin{bmatrix}
    0 \\
    0.1
    \end{bmatrix}, 
    C = \begin{bmatrix}
    1.0 & 1.0
    \end{bmatrix},
\end{equation}
which is a discrete version of the famous Doyle's LQG example. Let $Q = 0.25 I_2$, $R = 0.2$, and $X_0 = 0.1 I_2$. We employ all three policy gradient methods in model-based settings and Algorithm \ref{alg:model-free} in model-free settings to learn a suboptimal SOF policy. The initial controller is set as $K_0 = 9$. The optimal gain $K^{\star}=4.0637$ can be found by solving several Lyapunov equations given in \cite[Theorem 1]{Yu2020AnED}. The step size of all methods is set as $\eta=0.2$. Besides, other hyperparameters of Algorithm \ref{alg:model-free} are set as: $r = 0.001$, $z = 2^{14}$, and $l = 100$.\footnote{Our code is available at \url{https://github.com/jieli18/sof}}

The relative errors of both the control gain and the cost function are presented in Fig. \ref{fig:eg1_results}, which are computed as $\|K-K^{\star}\|_F/\|K^{\star}\|_F$ and $|J(K)-J(K^{\star})|/|J(K^{\star})|$, respectively. We can easily observe that all model-based policy gradient methods converge to the optimal solution within 100 iterations. As expected, the two model-free methods, especially the model-free natural policy gradient method, converge more slowly and unsteadily than their model-based counterparts due to gradient estimation errors. These results provide numerical evidence for our theoretical convergence analysis.

\begin{figure}[!htbp]
\centering
\captionsetup{singlelinecheck = false,labelsep=period, font=small}
\captionsetup[subfigure]{justification=centering}
\subfloat[policy error ]{ \includegraphics[width=0.46\linewidth]{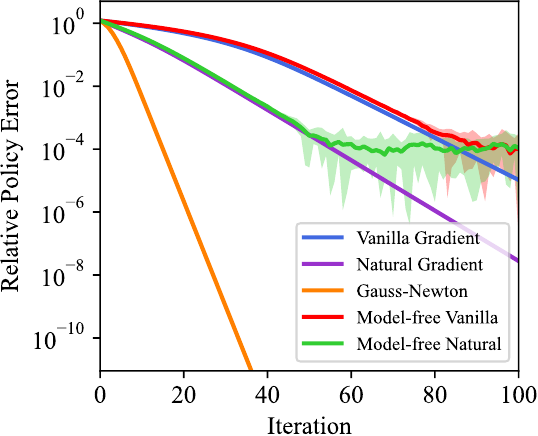}}
\subfloat[cost error]{\includegraphics[width=0.46\linewidth]{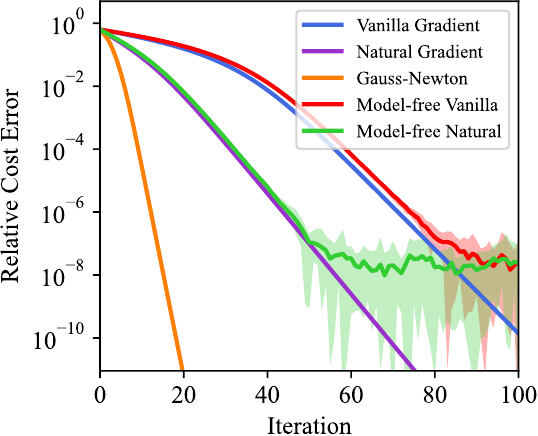}}
\caption{Learning curves of different methods for Example I. The solid lines correspond to the mean and the shaded regions correspond to an interval between maximum and minimum values over 10 runs.}
\label{fig:eg1_results}
\end{figure}

For the internally unstable system, such as \eqref{eq.unstable_system}, the stability of the controller can be assessed by evaluating the spectral radius of the closed-loop system matrix. This process, however, requires the knowledge of model dynamics. As a result, finding stabilizing controllers can be relatively complex when using model-free methods. In such instances, the trial and error approach could provide a practical strategy for obtaining an initial stabilizing controller. In terms of applying a controller to the dynamic system, the convergence or divergence of the observation output provides a useful criterion for determining the stability of the closed-loop system. Through the application of this manner, we are able to establish the set of stabilizing controllers for the internally unstable system~\eqref{eq.unstable_system}, which is $\mathbb{K} = \{K: K \in (2.1, 22.05)\}$.

We run all three policy gradient methods with 10 randomly generated initial stabilizing controllers. The relative errors of control gains are shown in Fig.~\ref{fig:initial_results}, where the curves of the same color start from the same initial point. It can be seen that all methods converge within 100 iterations under different initial controllers. This further confirms our theoretical convergence results within the context of an internally unstable system with randomly chosen initial controllers.

\begin{figure}[!htbp]
\centering
\captionsetup{singlelinecheck = false,labelsep=period, font=small}
\captionsetup[subfigure]{justification=centering}
\subfloat[vanilla gradient]{\includegraphics[width=0.46\linewidth]{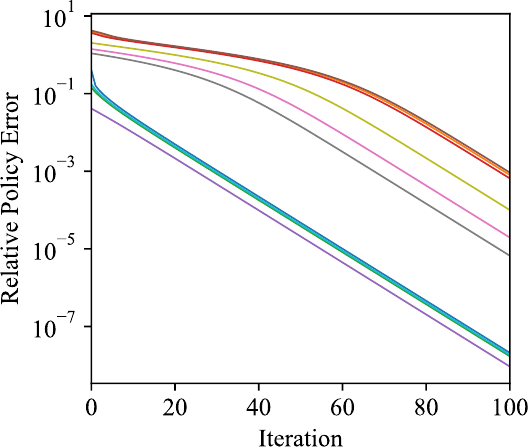}}
\subfloat[natural gradient]{\includegraphics[width=0.46\linewidth]{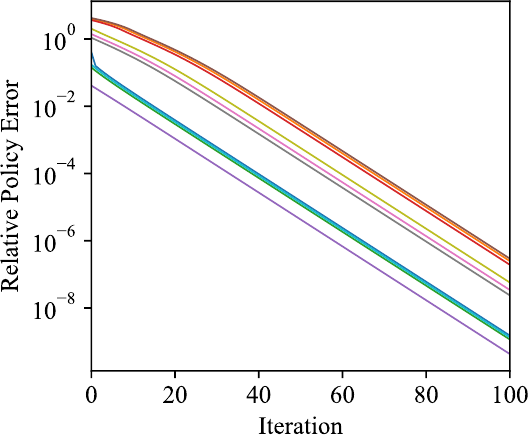}}
\\
\vspace{-1em}
\subfloat[Gauss-Newton]{\includegraphics[width=0.46\linewidth]{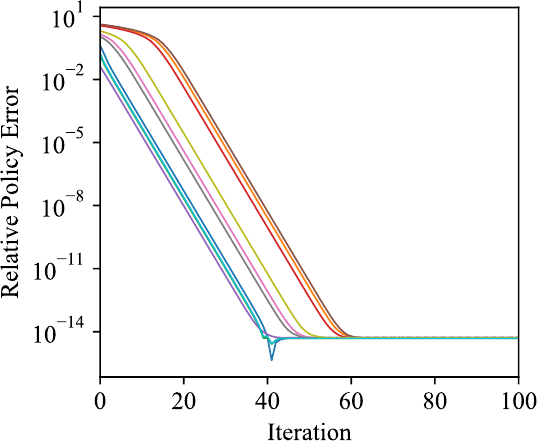}}
\subfloat[model-free vanilla]{\includegraphics[width=0.46\linewidth]{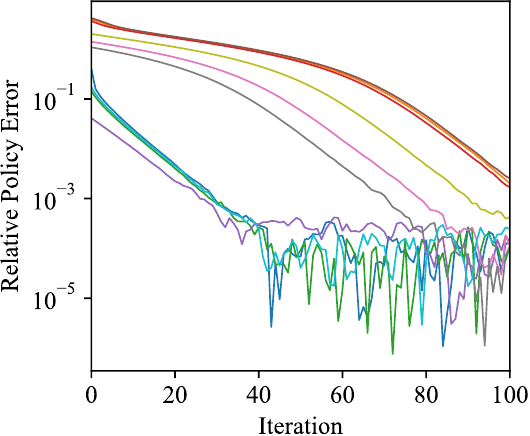}}
\\
\subfloat[model-free natural]{\includegraphics[width=0.46\linewidth]{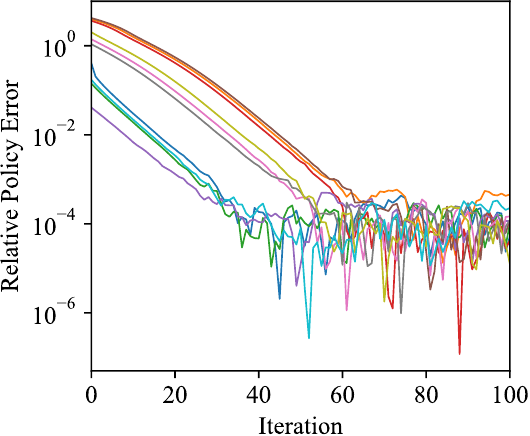}}
\caption{Learning curves of different methods with 10 different random initializations (corresponding to curves with different colors).}
\label{fig:initial_results}
\end{figure}

\subsection{Example II: Four-dimensional Open-loop Stable System}
Consider a circuit system given in \cite{lewis1992applied} 
with
\begin{equation}
    \nonumber
    \begin{aligned}
    A = \begin{bmatrix}
    0.90031 & -0.00015 & 0.09048 & -0.00452 \\
    -0.00015 & 0.90031 & 0.00452 & -0.09048 \\
    -0.09048 & -0.00452 & 0.90483 & -0.09033 \\
    0.00452 & 0.09048 & -0.09033 & 0.90483
    \end{bmatrix}, \\
    B = \begin{bmatrix}
    0.00468 & -0.00015 \\
    0.00015 & -0.00468 \\
    0.09516 & -0.00467 \\
    -0.00467 & 0.09516
    \end{bmatrix}, 
    C = \begin{bmatrix}
    1 & 1 & 0 & 0 \\
    0 & 1 & 0 & 0
    \end{bmatrix},
    \end{aligned}
\end{equation}
where $Q = {\rm diag}([0.1, 0.2, 0, 0])$, $R = {\rm diag}([10^{-6}, 10^{-4}])$, and $X_0 = I_4$. According to~\cite[Theorem 1]{Yu2020AnED}
, the optimal gain is
\begin{equation}
    \nonumber
    K^* = \begin{bmatrix}
    2.9738 & -7.2907 \\
    2.1067 & -12.5384
    \end{bmatrix}.
\end{equation}

We set $K_0 = \begin{bmatrix}
    0 & -1 \\
    0 & -2
    \end{bmatrix}$ for all methods and adopt the same hyperparameters as outlined in Section \ref{sec.2-d example}.
The relative errors in control gain and cost function for various methods are shown in Fig. \ref{fig:eg3_results}. The observed trend of this example is quite similar to the example given in Section \ref{sec.2-d example}. Overall, these numerical findings corroborate our theoretical analysis on convergence.

\begin{figure}[!htbp]
\centering
\captionsetup{singlelinecheck = false,labelsep=period, font=small}
\captionsetup[subfigure]{justification=centering}
\subfloat[policy error]{\includegraphics[width=0.46\linewidth]{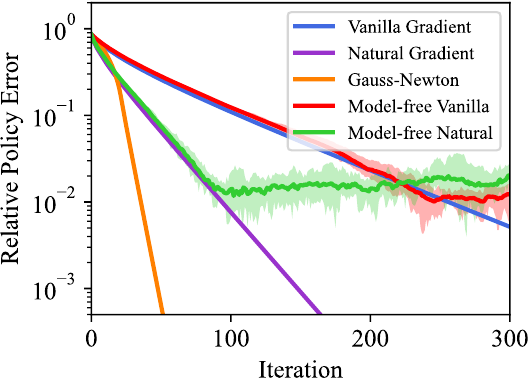}}
\subfloat[cost error]{\includegraphics[width=0.46\linewidth]{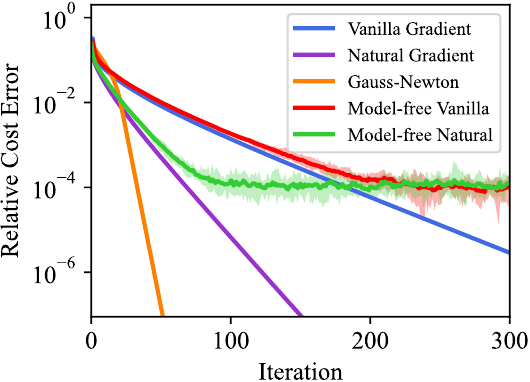}}
\caption{Learning curves of different methods for Example II. The solid lines correspond to the mean and the shaded regions correspond to the interval between maximum and minimum values over 10 runs.}
\label{fig:eg3_results}
\end{figure}

\section{Conclusion}
\label{sec:conclusion}
In this work, we have investigated the optimization landscape of three distinct policy gradient algorithms for SOF problems. Initially, we demonstrated various crucial properties of the SOF cost function, including coercivity, $L$-smoothness, and $M$-Lipschitz continuity of its Hessian. Utilizing these foundational properties, we unearthed new understandings about the convergence behaviors and rates at which all three policy gradient algorithms arrive at stationary points. These stationary points are generally influenced by the initial state distribution. Moreover, provided that the initial gain is around a local minimum, we demonstrated that the vanilla policy gradient exhibits linear convergence towards that minimum. Our numerical experiments suggest that both the vanilla policy gradient method and the natural policy gradient method can be implemented in a model-free manner, as long as the gradient estimations are sufficiently accurate. \textcolor{blue}{Additionally, recent literature highlights the potential of model-free SOF in  $H_{\infty}$ control \cite{arogeti2021static}.  Our next steps will involve expanding the convergence analysis in this specialized domain.}

\vspace{-0.5em}

\section*{Acknowledgment}
We extend our gratitude to Yang Zheng for his valuable suggestions.

\vspace{-0.5em}
\appendix
\subsection{Intermediate Lemmas}
\label{appen.intermediate lemma}
\begin{lemma}
\label{lemma.upper_bound}
The upper bound of $\|P_K\|$ and $\|\Sigma_K\|$ are given by 
\begin{equation}
\nonumber
\|P_K\|\le \frac{J(K)}{\mu}, \quad\|\Sigma_K\|\le {\rm Tr}(\Sigma_K)\le \frac{J(K)}{\sigma_{\rm min}(Q)}.
\end{equation}
\end{lemma}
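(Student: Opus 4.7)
The plan is to exploit the two expressions for the SOF cost, namely $J(K)={\rm Tr}(P_K X_0)$ and $J(K)={\rm Tr}((Q+C^\top K^\top RKC)\Sigma_K)$, both of which appear in Problem~\ref{pro.SOF}. Each of $P_K$, $\Sigma_K$, $X_0$, and $Q$ is positive (semi)definite, so the standard trace inequality ${\rm Tr}(AB)\ge \lambda_{\min}(B)\,{\rm Tr}(A)$ for $A,B\succeq 0$ will do most of the work; the two bounds then follow by almost identical one-line manipulations.

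For the first inequality, I would start from $J(K)={\rm Tr}(P_K X_0)$ and apply the trace inequality with $A=P_K$ and $B=X_0$ to obtain $J(K)\ge \sigma_{\min}(X_0){\rm Tr}(P_K)=\mu\,{\rm Tr}(P_K)$. Since $P_K\succeq 0$, its spectral norm is bounded by its trace, so $\|P_K\|\le {\rm Tr}(P_K)\le J(K)/\mu$.

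For the second inequality, I would drop the term $C^\top K^\top RKC\succeq 0$ from the quadratic cost expression to get $J(K)\ge {\rm Tr}(Q\Sigma_K)$, then apply the same trace inequality to conclude ${\rm Tr}(Q\Sigma_K)\ge \sigma_{\min}(Q)\,{\rm Tr}(\Sigma_K)$. Since $\Sigma_K\succeq 0$, the chain $\|\Sigma_K\|\le {\rm Tr}(\Sigma_K)\le J(K)/\sigma_{\min}(Q)$ follows immediately.

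There is no real obstacle here; the only subtlety is the trace inequality ${\rm Tr}(AB)\ge \lambda_{\min}(B){\rm Tr}(A)$, which is justified by writing ${\rm Tr}(AB)={\rm Tr}(A^{1/2}BA^{1/2})$ and using that $B\succeq \lambda_{\min}(B)I$ implies $A^{1/2}BA^{1/2}\succeq \lambda_{\min}(B)A$. Both bounds are tight in the sense that they use only positive semidefiniteness of the dropped summands and the minimal singular values of $X_0$ and $Q$, making the resulting estimates sharp enough to feed into the smoothness and Hessian Lipschitz bounds established in Lemmas~\ref{lemma.L-smooth} and~\ref{lemma.continue_Hessian}.
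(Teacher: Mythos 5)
Your proposal is correct and follows essentially the same route as the paper: both bounds come from lower-bounding $J(K)$ via the trace identities ${\rm Tr}(P_KX_0)\ge\mu\,{\rm Tr}(P_K)\ge\mu\|P_K\|$ and ${\rm Tr}((Q+C^\top K^\top RKC)\Sigma_K)\ge\sigma_{\rm min}(Q){\rm Tr}(\Sigma_K)\ge\sigma_{\rm min}(Q)\|\Sigma_K\|$. The only cosmetic difference is that you pass explicitly through ${\rm Tr}(P_K)$ before bounding $\|P_K\|$, whereas the paper states the combined inequality in one step.
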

\begin{proof}
From \eqref{eq.cost_in_P}, one has
\begin{equation}
\nonumber
J(K)= {\rm Tr}(P_KX_0)\ge \mu\|P_K\|.
\end{equation}
Then, the first claim can be directly obtained. Similarly, $J(K)$ can also be lower bounded by
\begin{equation}
\nonumber
\begin{aligned}
J(K)= &{\rm Tr}((Q+C^\top K^\top R K C )\Sigma_K)\ge \sigma_{\rm min}(Q){\rm Tr}(\Sigma_K)\\
\ge & \sigma_{\rm min}(Q)\|\Sigma_K\|,
\end{aligned}
\end{equation}
which leads to the second claim.
\end{proof}

\begin{lemma}
\label{lemma.K_upper_bound}
For any $K\in \mathbb{K}_\alpha$, it holds that 
\begin{equation}
\nonumber
\|KC\|\le \psi:=\frac{\sqrt{\|R\|\alpha+\|B\|^2 \alpha^2/\mu }}{\sqrt{\mu}\sigma_{\rm min}(R)}+\frac{\|B\| \|A\|\alpha}{\mu\sigma_{\rm min}(R)}.
\end{equation}
\end{lemma}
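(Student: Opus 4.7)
The plan is to write $KC$ explicitly in terms of $E_K$ using the invertibility of $H := R + B^\top P_K B \succeq R \succ 0$, and then separately bound $\|P_K\|$ and $\|E_K\|$. Since $E_K = HKC - B^\top P_K A$, we can solve $KC = H^{-1}(E_K + B^\top P_K A)$, and using $\|H^{-1}\| \le 1/\sigma_{\min}(R)$ we immediately obtain
\[
\|KC\| \;\le\; \frac{\|E_K\| + \|B\|\|P_K\|\|A\|}{\sigma_{\min}(R)}.
\]
The second term is handled by Lemma~\ref{lemma.upper_bound}, which gives $\|P_K\| \le J(K)/\mu \le \alpha/\mu$; substituting this already matches the $\|B\|\|A\|\alpha/(\mu\sigma_{\min}(R))$ contribution in $\psi$.

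To bound $\|E_K\|$, the key step is to re-express the Lyapunov equation \eqref{eq.lyapunov_equation} in a Riccati-like form. Expanding $\mathcal{A}_K^\top P_K \mathcal{A}_K$ with $\mathcal{A}_K = A - BKC$, collecting the quadratic-in-$KC$ terms into $C^\top K^\top H KC$, and completing the square around $HKC = E_K + B^\top P_K A$ yields the identity
\[
P_K \;=\; Q + A^\top P_K A - A^\top P_K B\, H^{-1} B^\top P_K A + E_K^\top H^{-1} E_K.
\]
By the Woodbury identity, $A^\top P_K A - A^\top P_K B H^{-1} B^\top P_K A = A^\top(P_K^{-1} + B R^{-1} B^\top)^{-1} A \succeq 0$, and $Q \succeq 0$, so the identity forces $E_K^\top H^{-1} E_K \preceq P_K$. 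For any unit vector $v$, $\|E_K v\|^2 / \|H\| \le v^\top E_K^\top H^{-1} E_K v \le \|P_K\|$, giving
\[
\|E_K\|^2 \;\le\; \|H\|\,\|P_K\| \;\le\; \bigl(\|R\| + \|B\|^2 \|P_K\|\bigr)\|P_K\| \;\le\; \frac{\|R\|\alpha}{\mu} + \frac{\|B\|^2 \alpha^2}{\mu^2}.
\]

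Combining this with the decomposition above produces exactly $\psi$. The main obstacle is the Riccati-like identity: one must carefully track the cross terms in the expansion of $\mathcal{A}_K^\top P_K \mathcal{A}_K$ and verify via Woodbury that the residual $A^\top P_K A - A^\top P_K B H^{-1} B^\top P_K A$ is positive semidefinite, which is what makes the bound $E_K^\top H^{-1} E_K \preceq P_K$ work. The remaining calculations are routine norm manipulations.
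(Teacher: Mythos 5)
Your proof is correct and recovers exactly the constant $\psi$, but the way you bound $\|E_K\|$ is genuinely different from the paper's. Both arguments open identically: write $KC=(R+B^\top P_KB)^{-1}(E_K+B^\top P_KA)$ and use $\|(R+B^\top P_KB)^{-1}\|\le 1/\sigma_{\rm min}(R)$ together with $\|P_K\|\le \alpha/\mu$ from Lemma~\ref{lemma.upper_bound}. For the remaining term, the paper bounds $\|E_K\|\le\sqrt{{\rm Tr}(E_K^\top E_K)}$ and then invokes the lower bound \eqref{eq:lower_bound_2} of Lemma~\ref{lemma.gradient_dominance}, i.e.\ ${\rm Tr}(E_K^\top E_K)\le \|R+B^\top P_KB\|J(K)/\mu$, which rests on the performance difference lemma; note that \eqref{eq:lower_bound_2} is only established for $C\in\mathbb{C}$ (full-rank square), so as written the paper's route implicitly leans on that case even though the lemma is applied for general $C$. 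You instead complete the square in the Lyapunov equation \eqref{eq.lyapunov_equation} to get the Riccati-type identity $P_K=Q+A^\top P_KA-A^\top P_KBH^{-1}B^\top P_KA+E_K^\top H^{-1}E_K$ with $H=R+B^\top P_KB$, observe via the matrix inversion lemma (valid since $P_K\succeq Q\succ 0$) that the middle block is positive semidefinite, and conclude $E_K^\top H^{-1}E_K\preceq P_K$, hence $\|E_K\|^2\le\|H\|\,\|P_K\|$. This is purely algebraic, self-contained, and places no rank condition on $C$, which is arguably cleaner given where Lemma~\ref{lemma.K_upper_bound} is used; the paper's route buys nothing extra here beyond reusing machinery it has already built for the gradient-domination analysis. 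The two bounds on $\|E_K\|$ coincide numerically after substituting $\|H\|\le\|R\|+\|B\|^2\alpha/\mu$ and $\|P_K\|\le\alpha/\mu$, so your final expression matches $\psi$ exactly.
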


\begin{proof}
First, we can observe that
\begin{equation}
\nonumber
\begin{aligned}
\|KC\|&= \|(R+B^\top P_K B)^{-1}(R+B^\top P_K B)KC \|\\
&\le \|(R+B^\top P_K B)^{-1}\|\|(R+B^\top P_K B)KC \|\\
&\le \frac{\|(R+B^\top P_K B)KC-B^\top P_KA+B^\top P_KA \|}{\sigma_{\rm min}(R)}\\
& \le \frac{\|E_K\|+\|B^\top P_KA\|}{\sigma_{\rm min}(R)}\\
& \le \frac{\sqrt{{\rm Tr}(E_K^\top E_K)}+\|B^\top P_KA\|}{\sigma_{\rm min}(R)}.
\end{aligned}
\end{equation}

From \eqref{eq:lower_bound_2}, we know that 
\begin{equation}
\nonumber
\begin{aligned}
{\rm Tr}(E_K^\top E_K) \le \frac{\|R+B^\top P_K B\|}{\mu}J(K).
\end{aligned}
\end{equation}
Thereby, we finally have
\begin{equation}
\nonumber
\begin{aligned}
\|KC\| &\le \frac{\sqrt{\|R+B^\top P_K B\|J(K)}}{\sqrt{\mu}\sigma_{\rm min}(R)}+\frac{\|B^\top P_KA\|}{\sigma_{\rm min}(R)}\\
&\le\frac{\sqrt{\|R\|\alpha+\|B\|^2 \alpha^2/\mu }}{\sqrt{\mu}\sigma_{\rm min}(R)}+\frac{\|B\| \|A\|\alpha}{\mu\sigma_{\rm min}(R)},
\end{aligned}
\end{equation}
where the last step follows from Lemma \ref{lemma.upper_bound}.
\end{proof}

\subsection{Proof of Lemma \ref{lemma.coercivity}}
\label{append:proof_coercivity}
\begin{proof}
From \eqref{eq.cost_in_P}, we can show that
\begin{equation} 
\nonumber
\begin{aligned}
J(K_i)
&={\rm Tr}((Q+C^\top K_i^\top RK_iC)\Sigma_{K_i})\\
&\ge\mu\sigma_{\rm min}(R)\sigma_{\rm min}(C)^2\| K_i\|^2,
\end{aligned}
\end{equation}
which directly leads to that $J(K_i)\rightarrow +\infty$ as $\|K_i\|\rightarrow +\infty$.

By \eqref{eq.P_expand}, we also have
\begin{equation}   
\nonumber
\begin{aligned}
J(K_i) &={\rm Tr}(\sum_{j=0}^{\infty}{\mathcal{A}_{K_i}^\top}^j(Q+C^\top K_i^\top RK_iC){\mathcal{A}_{K_i}}^jX_0)\\
&\ge \mu \sigma_{\rm min}(Q)\sum_{j=0}^{\infty}\|{\mathcal{A}_{K_i}}^j\|_F^2\ge \mu \sigma_{\rm min}(Q)\sum_{j=0}^{\infty}\rho(\mathcal{A}_{K_i})^{2j}\\
&= \mu \sigma_{\rm min}(Q)\frac{1-\rho(\mathcal{A}_{K_i})^{\infty}}{1-\rho(\mathcal{A}_{K_i})^2}.
\end{aligned}
\end{equation}
Since $ \rho(\mathcal{A}_{K})=1$ when $K \in \partial \mathbb{K}$, by continuity of the $\rho(\mathcal{A}_{K_i})$, we have $\rho(\mathcal{A}_{K_i}) \rightarrow 1$ as $K_i \rightarrow K \in \partial \mathbb{K}$. Therefore, for every $\epsilon >0$, there exists some $N(\epsilon)\in \mathbb{N}$ such that $ 1 - \rho(\mathcal{A}_{K_i}) < \epsilon$ for all $i \ge N(\epsilon)$. That is, $1>\rho(\mathcal{A}_{K_i})>1-\epsilon$ for $i\ge N(\epsilon)$. Hence, $J(K_i)$ is bounded below by
\begin{equation} 
\nonumber
J(K_i) \ge \mu \sigma_{\rm min}(Q)\frac{1}{1-(1-\epsilon)^2}.
\end{equation}
It thus follows that
$J(K_i) \rightarrow +\infty$ as $K_i \rightarrow \partial \mathbb{K}$. This completes the proof of Lemma \ref{lemma.coercivity}.
\end{proof}

\subsection{Derivations of bounds $q_1$ and $q_2$ in Lemma \ref{lemma.L-smooth}}
\label{appendix.bound}
Firstly, for the bound $q_1$, we can easily observe that
\begin{equation}
\label{eq.upper_of_q1}
\begin{aligned}
 q_1 \le &\sup_{\|Z\|_F=1}\left(\|C^\top Z^\top (R+B^{\top} P_KB)ZC\|{\rm Tr}(\Sigma_K)\right)\\
 \le &\sup_{\|Z\|_F=1}\left( \|C\|^2\|Z\|_F^2(\|R\|+\|B\|^2\|P_K\|){\rm Tr}(\Sigma_K)\right)\\
  \le & \|C\|^2\left(\|R\|+\|B\|^2\frac{J(K)}{\mu}\right)\frac{J(K)}{\sigma_{\rm min}(Q)},
 \end{aligned}
\end{equation}
where the last step follows from Lemma \ref{lemma.upper_bound}.

Next, we focus on the upper bound of $q_2$. Using the Cauchy-Schwarz inequality, we can show that
\begin{equation}
\nonumber
\begin{aligned}
q_2\le &\sup_{\|Z\|_F=1}\left(\|(BZC)^\top P'_K[Z]\mathcal{A}_{K}\Sigma_K^{1/2}\|_F \|\Sigma_K^{1/2}\|_F\right)\\
\le& \sup_{\|Z\|_F=1}\left(\|C\|\|Z\|\|B\|\|P'_K[Z]\|\|\mathcal{A}_{K}\Sigma_K^{1/2}\|_F \sqrt{{\rm Tr}(\Sigma_K)}\right)\\
\le& \|C\|\|B\|\sup_{\|Z\|_F=1}(\|P'_K[Z]\|) \sqrt{{\rm Tr}(\mathcal{A}_{K}\Sigma_K\mathcal{A}_{K}^\top)} \sqrt{{\rm Tr}(\Sigma_K)}.
\end{aligned}
\end{equation}
By \eqref{eq.lyapunov_equation_sigma}, it is not hard to see that $\Sigma_K \succ \mathcal{A}_{K}\Sigma_K\mathcal{A}_{K}^\top$. Therefore, we further have
\begin{equation}
\label{eq.upper_of_q2} 
\begin{aligned}
q_2 &\le  \|C\|\|B\|{\rm Tr}(\Sigma_K)\sup_{\|Z\|_F=1}\|P'_K[Z]\|\\
&\le  \|C\|\|B\|\frac{J(K)}{\sigma_{\rm min}(Q)}\sup_{\|Z\|_F=1}\|P'_K[Z]\|,
\end{aligned}
\end{equation}
where the last step follows from Lemma \ref{lemma.upper_bound}. Then, the only thing left is to show the following bound holds
\begin{equation}
\nonumber
\sup_{\|Z\|_F=1}\|P'_K[Z]\| \le \zeta_1 \|P_K\|,
\end{equation}
where $\zeta_1$ is as given by \eqref{eq.zeta1}.

We will prove the above inequality by showing that $P'_K[Z] \preceq \zeta_1 P_K$. Based on \eqref{eq.P_expand} and \eqref{eq.derivative_P}, $(C^\top Z^\top E_K  + E_K^\top ZC)\preceq \zeta_1(Q+C^\top K^\top RKC)$ will directly lead to $P'_K[Z] \preceq \zeta_1 P_K$ for a given $\zeta_1 \in \mathbb{R}^+$. Now the remaining task is to find such $\zeta_1$. From \eqref{eq.lyapunov_equation}, we have
\begin{equation}
\label{eq.eata_1_derivation}
\begin{aligned}
&C^\top Z^\top E_K  + E_K^\top ZC \\
&=C^\top Z^\top RKC+C^\top K^\top RZC\\
&\qquad -C^\top Z^\top B^\top P_K\mathcal{A}_{K}-\mathcal{A}_{K}^\top P_KBZC\\
&\preceq  C^\top Z^\top RZC+C^\top K^\top RKC\\
&\quad +\mathcal{A}_{K}^\top P_K\mathcal{A}_{K}+(BZC)^\top P_KBZC\\
&= P_K - Q + C^\top Z^\top RZC + (BZC)^\top P_KBZC\\
&\preceq  \|P_K+C^\top Z^\top RZC + (BZC)^\top P_KBZC\|I-Q\\
&\preceq  \frac{Q}{\sigma_{\rm min}(Q)}\Big(\frac{\alpha}{\mu}(1+\|B\|^2\|C\|^2)+\|R\|\|C\|^2 \Big)-Q.
\end{aligned}
\end{equation}
Therefore, we prove that $P'_K[Z] \preceq \zeta_1 P_K$. According to  \eqref{eq.upper_of_q2} and Lemma \ref{lemma.upper_bound}, this directly leads to \eqref{eq.upper_of_q2_new_final}.

\subsection{Proof of Lemma \ref{lemma.gradient_dominance}}
\label{appendix.proof_dominance}

The performance difference lemma, also referred to as almost smoothness,  serves as the foundational element for establishing the gradient domination condition.

\begin{lemma}[Performance difference lemma]
\label{lemma.performance_diff}
\textnormal{Let $K$, $K' \in \mathbb{K}$. Then, the following relationship exists:
\begin{equation}
\nonumber
\begin{aligned}
J&(K')-J(K) = 2{\rm Tr}\big(\Sigma_{K'}(K'C-KC)^\top E_K\big)+\\
&+{\rm Tr}\big(\Sigma_{K'}(K'C-KC)^\top (R+B^\top P_K B)(K'C-KC)\big).
\end{aligned}
\end{equation}}
\end{lemma}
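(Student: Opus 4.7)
The plan is to prove the identity by the standard value-function telescoping technique, using the closed-loop trajectory under $K'$ together with the Bellman-type recursion associated with $V_K$. First, I would roll out the state trajectory $\{x_t\}$ driven by $\mathcal{A}_{K'}$ from $x_0$, so that $J(K') = \mathbb{E}_{x_0}\sum_{t=0}^{\infty} x_t^\top (Q + C^\top K'^\top R K' C) x_t$. Since $K' \in \mathbb{K}$, the tail $x_T^\top P_K x_T$ vanishes as $T \to \infty$ (combining $\rho(\mathcal{A}_{K'}) < 1$ with the boundedness of $P_K$), and $\mathbb{E}_{x_0}[V_K(x_0)] = J(K)$ by \eqref{eq.V_expand} and \eqref{eq.cost_in_P}. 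Inserting the telescoping identity $\sum_{t=0}^{\infty}(V_K(x_t) - V_K(x_{t+1})) = V_K(x_0)$ then yields
\[
J(K') - J(K) = \mathbb{E}_{x_0}\sum_{t=0}^{\infty} x_t^\top A_K(K')\, x_t,
\]
where $A_K(K') := (Q + C^\top K'^\top R K' C) + \mathcal{A}_{K'}^\top P_K \mathcal{A}_{K'} - P_K$ is the one-step Bellman residual of $V_K$ under the perturbed policy.

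Next, I would reduce $A_K(K')$ to the advertised form. Setting $\Delta := K'C - KC$ and writing $\mathcal{A}_{K'} = \mathcal{A}_K - B\Delta$, I would substitute the Lyapunov equation $P_K = Q + C^\top K^\top R K C + \mathcal{A}_K^\top P_K \mathcal{A}_K$ to cancel the $Q$ and $-P_K$ terms, leaving
\[
A_K(K') = \bigl(C^\top K'^\top R K' C - C^\top K^\top R K C\bigr) + \bigl(\mathcal{A}_{K'}^\top P_K \mathcal{A}_{K'} - \mathcal{A}_K^\top P_K \mathcal{A}_K\bigr).
\]
Expanding each parenthesis in $\Delta$ produces two symmetric cross-terms plus a quadratic term in $\Delta$. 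Collecting the linear pieces and invoking the key identity $RKC - B^\top P_K \mathcal{A}_K = E_K$ (which follows directly from $B^\top P_K A = B^\top P_K \mathcal{A}_K + B^\top P_K B K C$ and the definition of $E_K$), the coefficient matrix condenses to
\[
A_K(K') = E_K^\top \Delta + \Delta^\top E_K + \Delta^\top (R + B^\top P_K B)\,\Delta.
\]

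Finally, I would convert each scalar quadratic form to a trace via $x^\top M x = \mathrm{Tr}(M x x^\top)$, interchange sum and trace (legitimate because $\Sigma_{K'}$ is finite whenever $K' \in \mathbb{K}$), and apply the definition $\Sigma_{K'} = \mathbb{E}_{x_0}\sum_{t=0}^{\infty} x_t x_t^\top$. Using the cyclic property of trace to bring $\Sigma_{K'}$ to the front, and observing that $\mathrm{Tr}(E_K^\top \Delta \Sigma_{K'}) = \mathrm{Tr}(\Sigma_{K'}\Delta^\top E_K)$ so the two linear cross-terms combine into a factor of $2$, produces precisely the claimed formula.

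The main obstacle is the algebraic reduction in the middle step: one must carefully track signs across the four mixed products, apply the Lyapunov equation at the right moment, and recognize the compact form $E_K = RKC - B^\top P_K \mathcal{A}_K$ so that the linear-in-$\Delta$ terms collapse cleanly. Aside from this, the telescoping step is routine for stabilizing policies, and no structural properties beyond the gradient formula in Lemma \ref{lemma:gradient} and the two Lyapunov equations of Problem \ref{pro.SOF} are required.
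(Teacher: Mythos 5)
Your proposal is correct and follows essentially the same route as the paper's proof: telescoping the value function $V_K$ along the trajectory generated by $K'$, expanding the one-step Bellman residual via the Lyapunov equation for $P_K$, and collecting the linear and quadratic terms in $K'C-KC$ into $E_K$ and $R+B^\top P_K B$ before passing to traces with $\Sigma_{K'}$. The only difference is presentational (you manipulate the residual as a matrix rather than as a scalar quadratic form in $x_t$), so no further comment is needed.
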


\begin{proof}
Consider state and action sequences 
 ${x_t'}$ and ${u_t'}$ generated by $K'$, and let $c_t'=x_t'^\top Qx_t'+u_t'^\top Ru_t'$. Then, one has
\begin{equation}
\nonumber
\begin{aligned}
J(K')-&J(K) \\
=& \mathbb{E}_{x_0\sim \mathcal{D}}\Big[\sum_{t=0}^{\infty}c_t' - V_K(x_0) \Big]\\
=& \mathbb{E}_{x_0\sim \mathcal{D}}\Big[\sum_{t=0}^{\infty}\left(c_t'+V_K(x_t')-V_K(x_t')\right) - V_K(x_0) \Big]\\
=& \mathbb{E}_{x_0\sim \mathcal{D}}\Big[\sum_{t=0}^{\infty}(c_t'+V_K(x_{t+1}')-V_K(x_t')) \Big],
\end{aligned}
\end{equation}
where the last step takes advantage of the fact that $x_0=x_0'$.

Let $A_K(x_t,K')=c_t+V_K(x_{t+1})-V_K(x_t)|_{u_t=-K'Cx_t}$, which can be expanded as 
\begin{equation}
\nonumber
\begin{aligned}
A_K&(x_t, K') \\
=& x_t^\top(Q + C^\top K'^\top RK'C)x_t +x_t^\top\mathcal{A}_{K'}^\top P_K\mathcal{A}_{K'}x_t - V_K(x_t)\\
=& x_t^\top(Q + (K'C-KC+KC)^\top R(K'C-KC+KC))x_t\\
& +x_t^\top(A-B(K'C-KC+KC))^\top P_K(A\\
&\qquad\qquad -B(K'C-KC+KC))x_t - V_K(x_t)\\
=&2x_t^\top(K'C-KC)^\top ((R+B^\top P_K B)KC-B^\top P_K A)x_t\\
& +x_t^\top(K'C-KC)^\top (R+B^\top P_K B)(K'C-KC)x_t\\
=&2x_t^\top(K'C-KC)^\top E_K x_t\\
& +x_t^\top(K'C-KC)^\top (R+B^\top P_K B)(K'C-KC)x_t.
\end{aligned}
\end{equation}
Then, we get that
\begin{equation}
\nonumber
\begin{aligned}
J(&K')-J(K) \\
=& \mathbb{E}_{x_0\sim \mathcal{D}}\Big[\sum_{t=0}^{\infty}A_K(x_t', K')  \Big]\\
=& \mathbb{E}_{x_0\sim \mathcal{D}}\Big[\sum_{t=0}^{\infty}\Big(2{\rm Tr}\big(x_t'x_t'^\top(K'C-KC)^\top E_K\big)+\\
&{\rm Tr}\big(x_t'x_t'^\top(K'C-KC)^\top (R+B^\top P_K B)(K'C-KC)\big)\Big)\Big]\\
=& 2{\rm Tr}\big(\Sigma_{K'}(K'C-KC)^\top E_K\big)+\\
&{\rm Tr}\big(\Sigma_{K'}(K'C-KC)^\top (R+B^\top P_K B)(K'C-KC)\big).
\end{aligned}
\end{equation}
\end{proof}

Next, we show the main proof of Lemma \ref{lemma.gradient_dominance}.
\begin{proof}
Let $X=(R+B^\top P_K B)^{-1}E_K \Sigma_{K'}C^\top\mathcal{L}_{K'}^{-1}$.
From Lemma \ref{lemma.performance_diff}, we find that
\begin{equation}
\label{eq.performance_diff_1}
\begin{aligned}
J&(K')-J(K)\\
=& 2{\rm Tr}\big(\Sigma_{K'}(K'C-KC)^\top E_K\big)\\
&+{\rm Tr}\big(\Sigma_{K'}(K'C-KC)^\top (R+B^\top P_K B)(K'C-KC)\big)\\
=& {\rm Tr}\big(\Sigma_{K'}C^\top(\Delta K+X)^\top (R+B^\top P_K B)(\Delta K+X)C\big)\\
&-{\rm Tr}\big(\Sigma_{K'}C^\top\mathcal{L}_{K'}^{-1}C\Sigma_{K'}E_K^\top (R+\\
&\qquad \qquad \qquad \qquad B^\top P_K B)^{-1} E_K\Sigma_{K'}C^\top\mathcal{L}_{K'}^{-1}C\big)\\
\ge&-{\rm Tr}\big(\mathcal{L}_{K'}^{-1}C\Sigma_{K'}E_K^\top (R+ B^\top P_K B)^{-1} E_K\Sigma_{K'}C^\top\big),
\end{aligned}
\end{equation}
where $\Delta K=K'-K$  and the equality holds when $K'=K-X$.

Then, one has
\begin{equation}
\label{eq.gradient_dominance_3}
\begin{aligned}
&J(K)-J(K^*)\\
&\le {\rm Tr}\big(\mathcal{L}_{K^*}^{-1}C\Sigma_{K^*}E_K^\top (R+B^\top P_K B)^{-1} E_K\Sigma_{K^*}C^\top\big)\\
&\le\|\Sigma_{K^*}C^\top\mathcal{L}_{K^*}^{-1}C\Sigma_{K^*}\|{\rm Tr}\big( E_K^\top (R+B^\top P_K B)^{-1} E_K \big)\\
&\le\|\Sigma_{K^*}C^\top\mathcal{L}_{K^*}^{-1}C\|\|\Sigma_{K^*}\|{\rm Tr}\big( E_K^\top (R+B^\top P_K B)^{-1} E_K \big)\\
&\le\|\Sigma_{K^*}\|{\rm Tr}\big( E_K^\top (R+B^\top P_K B)^{-1} E_K \big)\\
&\le \frac{\|\Sigma_{K^*}\|{\rm Tr}\big(E_K^\top E_K\big)}{\sigma_{\rm min}(R)}.
\end{aligned}
\end{equation}
From \eqref{eq.gradient}, it follows that 
\begin{equation}  
\label{eq.gradient_dominance_2}
\begin{aligned}
\|\nabla J(K))\|_F^2&= 4{\rm Tr}(C\Sigma_KE_K^\top E_K\Sigma_KC^\top)\\
&\ge 4\mu^2\sigma_{\rm min}(C)^2{\rm Tr}(E_K^\top E_K),\  \forall C \in \mathbb{C}.
\end{aligned}
\end{equation}
By \eqref{eq.gradient_dominance_3} and \eqref{eq.gradient_dominance_2}, one has
\begin{equation}
\label{eq.gradient_dominance_4}
J(K)-J(K^*)\le \frac{\|\Sigma_{K^*}\|\|\nabla J(K))\|_F^2}{4\mu^2\sigma_{\rm min}(C)^2\sigma_{\rm min}(R)}, \ \forall C\in \mathbb{C}.
\end{equation}

Suppose $K'$ satisfies that $K'=K-X$. According to \eqref{eq.performance_diff_1}, we get
\begin{equation}
\label{eq:lower_bound_2_ori}
\begin{aligned}
J(K)&-J(K^*)\\
&\ge J(K)-J(K')\\
&={\rm Tr}\big(\mathcal{L}_{K'}^{-1}C\Sigma_{K'}E_K^\top (R+B^\top P_K B)^{-1} E_K\Sigma_{K'}C^\top\big)\\
&\ge \frac{\mu{\rm Tr}\big(E_K^\top  E_K\big)}{\|R+B^\top P_K B\|}, \ \forall C \in \mathbb{C}.
\end{aligned}
\end{equation}

In addition, when $C\in \mathbb{C}$, since we can always identity the state $x$ by $x= C^{-1}y$, it is clear that $J(K^*)=J_s^*$ for every $C\in \mathbb{C}$. By replacing $J(K^*)$ in \eqref{eq.gradient_dominance_4} and \eqref{eq:lower_bound_2_ori} with $J_s^*$, we finally complete the proof.
\end{proof}

\subsection{Derivations of $\zeta_2$, $\zeta_3$, and $\zeta_4$ in Lemma \ref{lemma.continue_Hessian}}
\label{appendix.continue_Hessian}
From \eqref{eq.derivative_P}, it is clear that 
\begin{equation} 
\begin{aligned}
 &\frac{\partial P_{\bar{K}}}{\partial \delta}=\sum_{j=0}^{\infty}{\mathcal{A}_{\bar{K}}^\top}^j(C^\top {\Delta K}^\top E_{\bar{K}} +E_{\bar{K}}^\top \Delta KC){\mathcal{A}_{\bar{K}}}^j.
\end{aligned}
\end{equation}
Then, we can observe that
\begin{equation}
\begin{aligned}
&C^\top {\Delta K}^\top E_{\bar{K}}  + E_{\bar{K}}^\top \Delta KC \\
&=C^\top\Delta K^\top R\bar{K}C+C^\top\bar{K}^\top R\Delta K C\\
&\qquad-C^\top\Delta K^\top B^\top P_{\bar{K}}{\mathcal{A}_{\bar{K}}}-\mathcal{A}_{\bar{K}}^\top P_{\bar{K}}B\Delta KC\\
&\le 2\|C\|(\|R\|\|\bar{K}C\|+\|B\|\|P_{\bar{K}}\|\|\mathcal{A}_{\bar{K}}\|)\|\Delta K\|I\\
&\le \frac{2\|C\|Q}{\sigma_{\rm min}(Q)}(\|R\|\psi+\gamma\|B\|\frac{\alpha}{\mu})\|\Delta K\|,
\end{aligned}
\end{equation}
where the last step follows from Lemma \ref{lemma.K_upper_bound}.
Therefore, according to \eqref{eq.P_expand}, we have $\frac{\partial P_{\bar{K}}}{\partial \delta} \preceq \zeta_2 \|\Delta K\|P_{\bar{K}}$.

Next, we will prove that $\frac{\partial P'_{\bar{K}}[Z]}{\partial \delta} \preceq \zeta_3 \|\Delta K\|P_{\bar{K}}$. Based on \eqref{eq.P_derivative}, we get
\begin{equation}   
\frac{\partial P'_{\bar{K}}[Z]}{\partial \delta}= \sum_{j=0}^{\infty}{\mathcal{A}_{\bar{K}}^\top}^jS_3{\mathcal{A}_{\bar{K}}}^j,
\end{equation}
where
\begin{equation}  
\nonumber
\begin{aligned}
S_3&:=C^\top Z^\top (R+B^{\top} P_{\bar{K}}B)\Delta KC-(BZC)^\top\frac{\partial P_{\bar{K}}}{\partial \delta}{\mathcal{A}_{\bar{K}}}  \\
&\quad + C^\top \Delta K^\top  (R+B^{\top} P_{\bar{K}}B) ZC-\mathcal{A}_{\bar{K}}^\top\frac{\partial P_{\bar{K}}}{\partial \delta}BZC \\ 
&\quad-(B\Delta KC)^\top P'_{\bar{K}}[Z]{\mathcal{A}_{\bar{K}}}-\mathcal{A}_{\bar{K}}^\top P'_{\bar{K}}[Z]B\Delta KC.
\end{aligned}
\end{equation}
Recalling that $P'_{\bar K}[Z] \preceq \zeta_1 P_{\bar K}$ and $\frac{\partial P_{\bar{K}}}{\partial \delta} \preceq \zeta_2 \|\Delta K\|P_{\bar{K}}$, we can also show that
\begin{equation}
\nonumber
\begin{aligned}
S_3 &\le 2(\|C\|^2\|R\|+\|C\|^2\|B\|^2\|P_{\bar{K}}\|+\zeta_2\gamma\|B\|\|C\|\|P_{\bar{K}}\|\\
&\qquad+\zeta_1\gamma\|B\|\|C\|\|P_{\bar{K}}\|)\|\Delta K\|I\\
&\le \frac{2\|C\|Q}{\sigma_{\rm min}(Q)}\big(\|C\|\|R\|+\|B\|(\|C\|\|B\|\\
&\qquad\qquad\qquad\qquad+\zeta_1\gamma+\zeta_2\gamma)\frac{\alpha}{\mu}\big)\|\Delta K\|.
\end{aligned}    
\end{equation}
Therefore, we get $\frac{\partial P'_{\bar{K}}[Z]}{\partial \delta} \preceq \zeta_3 \|\Delta K\|P_{\bar{K}}$.

Similarly, for $P''_{\bar{K}}[Z]$, from \eqref{eq.second_derivative_P}, we can show that
\begin{equation}
\nonumber
\begin{aligned}
S_1&\le 2(\|C\|^2\|R\|+\|C\|^2\|B\|^2\|P_{K}\|+\zeta_1\gamma\|B\|\|C\|\|P_{K}\|)I\\
&\le \frac{2\|C\|Q}{\sigma_{\rm min}(Q)}\big(\|C\|\|R\|+\|B\|(\|C\|\|B\|+\zeta_1\gamma)\frac{\alpha}{\mu}\big).\\
\end{aligned}    
\end{equation}
So, it is clear $P''_{\bar{K}}[Z]\preceq \zeta_4 P_{\bar{K}} $, which completes the derivations.

\subsection{Proof of Theorem \ref{theorem: natural_pg}}
\label{appendix:natural_pg}

\begin{proof}
We can easily modify the proof of Theorem \ref{theorem: first_order_convergence} to show that for every $K \in \mathbb{K}_{\alpha}$, if $\eta \le \mu \sigma_{\rm min}(C)^2/L$, the line segment $[K,K-\eta \nabla^{\rm NA} J(K)] \subseteq \mathbb{K}_{\alpha}$. Then from \eqref{eq.L-smooth}, one has 
\begin{equation}
\nonumber
\begin{aligned}
&J(K_{i+1})\\
&\le J(K_i)-\eta{\rm Tr}(\nabla J(K_i)^\top\nabla^{\rm NA} J(K_i) )+\frac{\eta^2L}{2}\|\nabla^{\rm NA} J(K_i) \|_F^2\\
&\le J(K_i)-\eta(\sigma_{\rm min}(\mathcal{L}_{K_i})-\frac{\eta L}{2})\|\nabla^{\rm NA} J(K_i) \|_F^2\\
&\le J(K_i)-\eta(\mu\sigma_{\rm min}(C)^2-\frac{\eta L}{2})\|\nabla^{\rm NA} J(K_i) \|_F^2\\
&\le J(K_i)-\frac{ \eta\mu\sigma_{\rm min}(C)^2  }{2}\|\nabla^{\rm NA} J(K_i) \|_F^2,
\end{aligned}
\end{equation}
where the last inequality takes into account that $\eta \le \mu \sigma_{\rm min}(C)^2/L$. Note that the boundary $\mu \sigma_{\rm min}(C)^2/L$ is selected for achieving the fastest convergence rate. By summing up the above inequality, one has
\begin{equation}
\nonumber
\frac{ \mu\sigma_{\rm min}(C)^2  \eta}{2}\sum_{i=0}^{N}\|\nabla^{\rm NA} J(K_i) \|_F^2 \le J(K_0) - J(K^{\star}).
\end{equation}
Consequently, it follows that
\begin{equation}
\nonumber
\min_{0\le i\le N} \|\nabla^{\rm NA} J(K_i) \|_F^2 \le  \frac{2\alpha}{ \eta\mu\sigma_{\rm min}(C)^2   N}.
\end{equation}
Thus, the natural policy gradient method can attain an $\epsilon$-stationary point in $\frac{2\alpha}{\eta\mu\sigma_{\rm min}(C)^2 \epsilon^2}$ iterations.

When $C\in \mathbb{C}$, by \eqref{eq.gradient} and \eqref{eq.L-smooth}, one has
\begin{equation}
\nonumber
\begin{aligned}
J(K_{i+1})&\le J(K_i)-4\eta{\rm Tr}(\Sigma_{K_i} E_{K_i}^\top E_{K_i})+2\eta^2L\|E_{K_i} C^{-1}\|_F^2\\
& \le J(K_i)-4\eta(\mu-\frac{ L\eta}{2\sigma_{\rm min}(C)^2})\| E_{K_i}\|_F^2\\
& \le J(K_i)-2 \mu \eta\| E_{K_i}\|_F^2\\
& \le J(K_i)-\frac{ 2\eta \mu\sigma_{\rm min}(R)}{\|\Sigma_{K^{\star}}\|}(J(K_i)-J(K^{\star})),
\end{aligned}
\end{equation}
where the last step follows from \eqref{eq.gradient_dominance_1}.
It directly follows that 
\begin{equation}
\nonumber
J(K_{i})-J_s^{\star} \le \left(1-\frac{2\eta\mu\sigma_{\rm min}(R)}{\|\Sigma_{K^{\star}}\|}\right)^i(J(K_0)-J_s^{\star}),
\end{equation}
which completes the proof of the second claim.
\end{proof}

\subsection{Proof of Theorem \ref{theorem: GN_convergence}}
\label{appendix:proof_GN}

\begin{proof}
We can easily modify the proof of Theorem \ref{theorem: first_order_convergence} to show that for every $K \in \mathbb{K}_{\alpha}$, if $\eta \le \mu\sigma_{\rm min}(R)\sigma_{\rm min}(C)^2/L$, the segment $[K,K-\eta \nabla^{GN} J(K)] \subseteq \mathbb{K}_{\alpha}$. Then from \eqref{eq.L-smooth}, one has 
\begin{equation}
\nonumber
\begin{aligned}
&J(K_{i+1})\\
&\le J(K_i)-\eta{\rm Tr}(\nabla J(K_i)^\top\nabla^{\rm GN} J(K_i))+\frac{\eta^2L}{2}\|\nabla^{\rm GN} J(K_i)\|_F^2\\
&\le J(K_i)-\eta(\mu\sigma_{\rm min}(R)\sigma_{\rm min}(C)^2-\frac{\eta L}{2})\|\nabla^{\rm GN} J(K_i)\|_F^2\\
&\le J(K_i)-\frac{\eta \mu\sigma_{\rm min}(R)\sigma_{\rm min}(C)^2}{2} \|\nabla^{\rm GN} J(K_i)\|_F^2,
\end{aligned}
\end{equation}
where the last inequality considers the boundary of step size, i.e., $\eta \le \mu\sigma_{\rm min}(R)\sigma_{\rm min}(C)^2/L$. The boundary $\mu\sigma_{\rm min}(R)\sigma_{\rm min}(C)^2/L$ is selected for achieving the fastest convergence rate. By summing up the above inequality, one has
\begin{equation}
\nonumber
\frac{\eta \mu\sigma_{\rm min}(R)\sigma_{\rm min}(C)^2}{2}\sum_{i=0}^{N}\|\nabla^{\rm NA} J(K_i) \|_F^2 \le J(K_0) - J(K^{\star}).
\end{equation}
Consequently, it follows that
\begin{equation}
\nonumber
\min_{0\le i\le N} \|\nabla^{\rm NA} J(K_i) \|_F^2 \le  \frac{2\alpha}{ \eta \mu\sigma_{\rm min}(R)\sigma_{\rm min}(C)^2 N}.
\end{equation}
Thus, the Gauss-Newton method can attain an $\epsilon$-stationary point in $\frac{2\alpha}{\eta \mu\sigma_{\rm min}(R)\sigma_{\rm min}(C)^2\epsilon^2}$ iterations.

When $C\in \mathbb{C}$, by \eqref{eq.gradient} and \eqref{eq.L-smooth}, one has
\begin{equation}
\nonumber
\begin{aligned}
J(K_{i+1})&\le J(K_i)-4\eta{\rm Tr}(\Sigma_{K_i} E_{K_i}^\top(R+B^\top P_{K_i} B)^{-1} E_{K_i})\\
&\qquad\qquad+2\eta^2L\|(R+B^\top P_{K_i} B)^{-1} E_{K_i} C^{-1}\|_F^2\\
&\le J(K_i)-4\eta(\mu-\frac{\eta L}{2\sigma_{\rm min}(R) \sigma_{\rm min}(C)^2})\times\\
&\qquad\qquad\qquad{\rm Tr}(E_{K_i}^\top(R+B^\top P_{K_i} B)^{-1} E_{K_i})\\
&\le J(K_i)-2\eta\mu{\rm Tr}(E_{K_i}^\top(R+B^\top P_{K_i} B)^{-1} E_{K_i})\\
&\le J(K_i)-\frac{2\eta\mu}{\|\Sigma_{K^{\star}}\|}(J(K_i)-J(K^{\star})).
\end{aligned}
\end{equation}
where the last step follows from \eqref{eq.gradient_dominance_1}. It directly follows that 
\begin{equation}
\nonumber
J(K_{i})-J_s^{\star} \le \big(1-\frac{2\eta\mu}{\|\Sigma_{K^{\star}}\|}\big)^i(J(K_0)-J_s^{\star}),
\end{equation}
which completes the proof of the second claim.
\end{proof}


\ifCLASSOPTIONcaptionsoff
  \newpage
\fi

\bibliographystyle{ieeetr}
\bibliography{ref}

\vskip -3\baselineskip plus -1fil
\begin{IEEEbiography}[{\includegraphics[width=1in,height=1.25in,clip,keepaspectratio]{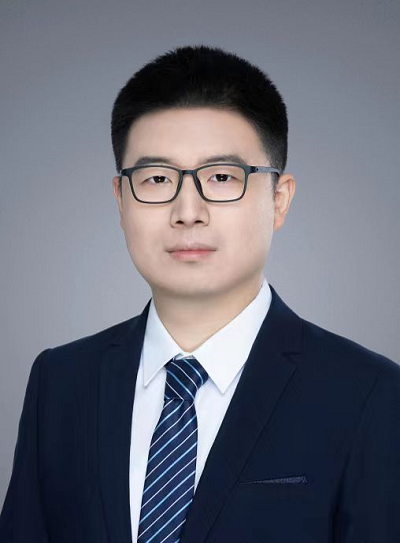}}]{Jingliang Duan} received his doctoral degree in mechanical engineering from the School of Vehicle and Mobility at Tsinghua University, China, in 2021. In 2019, he spent time as a visiting student researcher in the Department of Mechanical Engineering at the University of California, Berkeley. Following his Ph.D., he served as a research fellow in the Department of Electrical and Computer Engineering at the National University of Singapore from 2021 to 2022. He is currently a tenured associate professor in the School of Mechanical Engineering, University of Science and Technology Beijing, China. His research interests include reinforcement learning, optimal control, and self-driving decision-making.
\end{IEEEbiography}
\vskip -3\baselineskip plus -1fil
\begin{IEEEbiography}[{\includegraphics[width=0.9in,height=1.25in,clip,keepaspectratio]{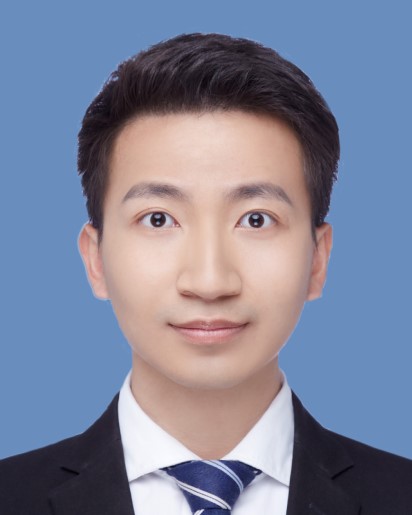}}]{Jie Li}
received the B.S. degree in automotive engineering from Tsinghua University, Beijing, China, in 2018. He is currently pursuing the Ph.D. degree in the School of Vehicle and Mobility, Tsinghua University, Beijing, China. His current research interests include model predictive control, adaptive dynamic programming and robust reinforcement learning. 
\end{IEEEbiography}
\vskip -3\baselineskip plus -1fil
\begin{IEEEbiography}[{\includegraphics[width=1in,height=1.25in,clip,keepaspectratio]{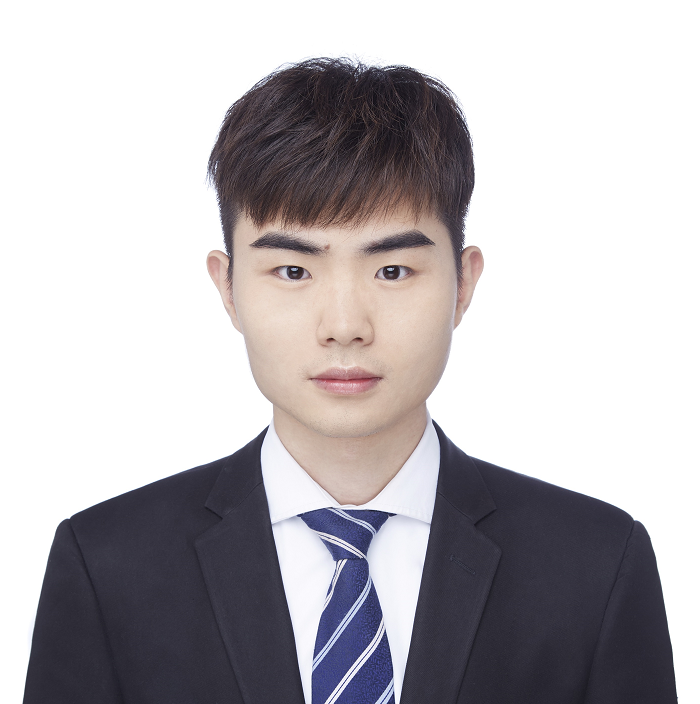}}]{Xuyang Chen}
received the B.S. degree from the honors college, Beihang University, Beijing, China, in 2019. He is currently a Ph.D. student in the Department of Electrical and Computer Engineering, National University of Singapore, Singapore. His research interests include reinforcement learning, Markov decision process and policy gradient methods.
\end{IEEEbiography}
\vskip -3\baselineskip plus -1fil
\begin{IEEEbiography}[{\includegraphics[width=1in,height=1.25in,clip,keepaspectratio]{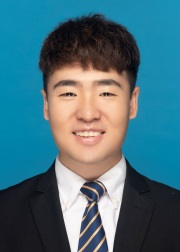}}]{Kai Zhao}
received the Ph.D. degree in control theory and control engineering from Chongqing University, Chongqing, China, in 2019. He was a Postdoctoral Fellow with the Department of Computer and Information Science, University of Macau, Macau, China, from 2019 to 2021. He is currently a Research Fellow with the Department of Electrical and Computer Engineering, National University of Singapore, Singapore. His research interests include adaptive control and prescribed performance control.
\end{IEEEbiography}

\vskip -3\baselineskip plus -1fil

\begin{IEEEbiography}[{\includegraphics[width=1in,height=1.25in,clip,keepaspectratio]{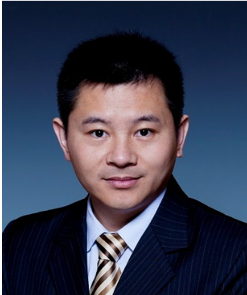}}]{Shengbo Eben Li}
(SM'16) received the M.S. and Ph.D. degrees from Tsinghua University in 2006 and 2009. He worked at Stanford University, University of Michigan, and University of California, Berkeley. He is currently a tenured professor at Tsinghua University. His active research interests include intelligent vehicles and driver assistance, reinforcement learning and distributed control, optimal control and estimation, etc.
\end{IEEEbiography}

\vskip -3\baselineskip plus -1fil
\begin{IEEEbiography}[{\includegraphics[width=1in,height=1.25in,clip,keepaspectratio]{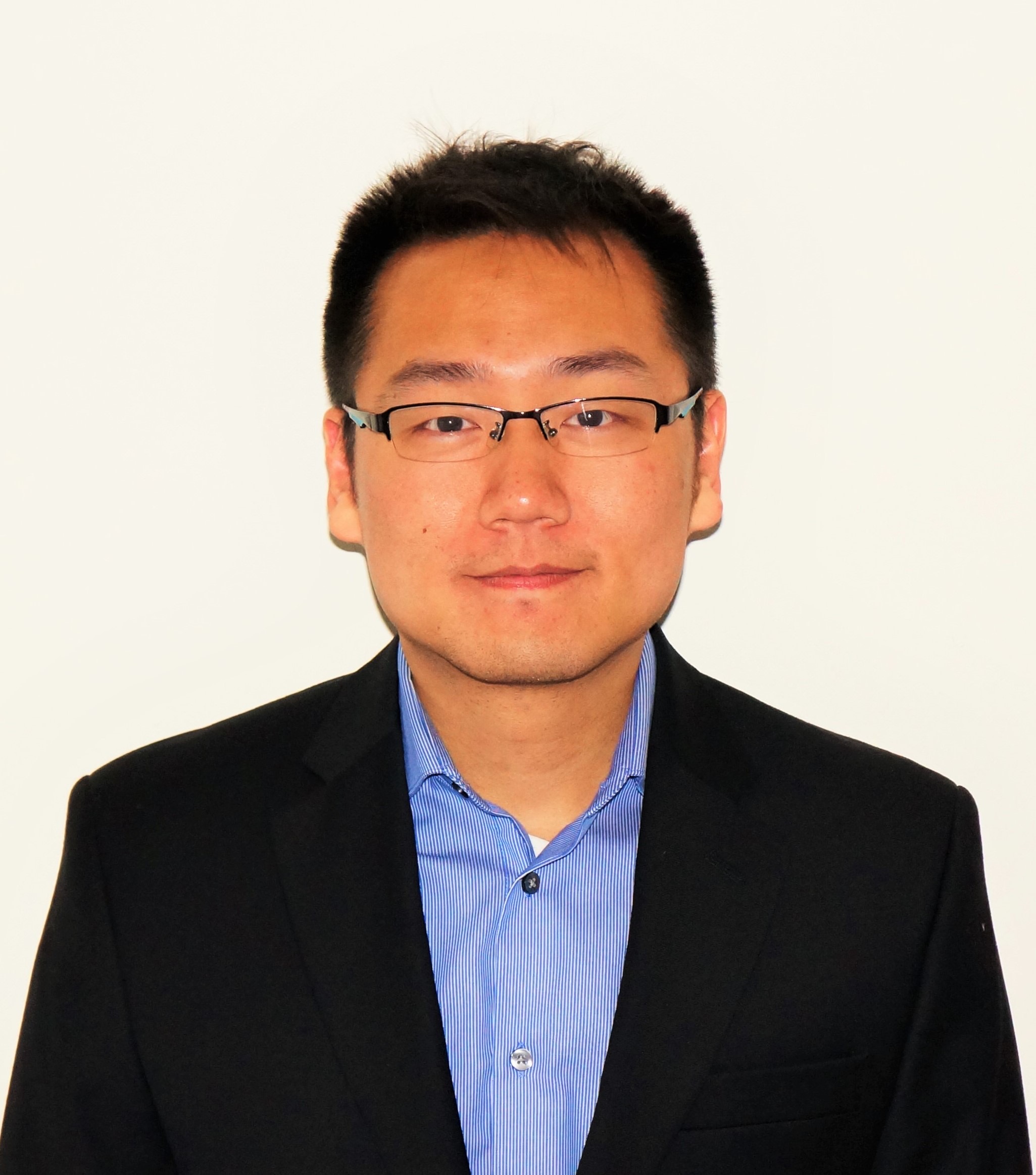}}]{Lin Zhao} received a B.S. and an M.S. degree in automatic control from the Harbin Institute of Technology, Harbin, China, in 2010 and 2012, respectively, an M.S. degree in mathematics and a Ph.D. degree in electrical and computer engineering from The Ohio State University, Columbus, OH, USA, in 2017. From 2018 to early 2020, he was a research scientist at the Aptiv Pittsburgh Technology Center (now Motional), Pittsburgh, PA, USA. He is currently an Assistant Professor in the Department of Electrical and Computer Engineering, National University of Singapore. He serves on the Young Editorial Board of the Springer Journal of Systems Science and Complexity, served as the Program Co-chair for the 17th IEEE International Conference on Control and Automation (ICCA 2022), and as Publicity Co-chair for the 62nd IEEE Conference on Decision and Control (CDC 2023). His current research focuses on control and reinforcement learning with applications in robotics.
\end{IEEEbiography}

\end{document}